\documentclass[11pt]{article}
\usepackage{amsmath,amssymb,amsfonts,amsthm}
\usepackage{mathrsfs}
\usepackage{indentfirst}
\usepackage[pdftex]{color,graphicx}
\usepackage{subfig}
\usepackage[pdftex,bookmarks,unicode,colorlinks]{hyperref}
\usepackage{enumerate} 
\usepackage[numbers]{natbib} 
\usepackage{yfonts} 

\usepackage{caption} 
\captionsetup[figure]{font={footnotesize},name={Fig.},labelsep=period}

\oddsidemargin 0in
\evensidemargin 0in
\topmargin -0.6in
\textwidth 6.5in
\textheight 9in

\theoremstyle{plain}
\newtheorem{theorem}{Theorem}[section]
\newtheorem{proposition}[theorem]{Proposition}
\newtheorem{corollary}[theorem]{Corollary}
\newtheorem{lemma}[theorem]{Lemma}

\theoremstyle{remark}
\newtheorem{remark}[theorem]{Remark}
\newtheorem{example}[theorem]{Example}


\theoremstyle{definition}


\newcommand{\E}{\mathbf E}
\renewcommand{\P}{\mathbf P}

\newcommand{\R}{\mathbb R}
\newcommand{\N}{\mathbb N}

\renewcommand{\S}{\mathbb S}

\newcommand{\Ro}[1]{\mathbb R^{#1}\setminus \{0\}}
\renewcommand{\C}{\mathcal C}

\newcommand{\A}{\mathcal A}

\newcommand{\e}{\epsilon}
\newcommand{\F}{\mathcal F}
\newcommand{\B}{\mathcal B}
\newcommand{\X}{\mathcal X}

\newcommand{\ind}{\mathbf 1}

\newcommand{\Ad}{\mathrm{Ad}}
\newcommand{\I}{\mathbf I}
\newcommand{\Lip}{\mathrm{Lip}}
\renewcommand{\H}{\mathcal H}

\numberwithin{equation}{section} 

\begin{document}

\title{\bf\Large Rolling with Random Slipping and Twisting: \\A Large Deviation Point of View}
\author{\bf\normalsize{
Qiao Huang$^{1,}$\footnote{Email: \texttt{hq932309@alumni.hust.edu.cn}},
Wei Wei$^{1,}$\footnote{Email: \texttt{weiw16@hust.edu.cn}},
Jinqiao Duan$^{1,2,}$\footnote{Email: \texttt{duan@iit.edu}}
} \\[10pt]
\footnotesize{$^1$Center for Mathematical Sciences, Huazhong University of Science and Technology,} \\
\footnotesize{Wuhan, Hubei 430074, P.R. China.} \\[5pt]
\footnotesize{$^2$Department of Applied Mathematics, Illinois Institute of Technology, Chicago, IL 60616, USA.}
}

\date{}
\maketitle
\vspace{-0.3in}

\begin{abstract}
  We study a rolling model from the perspective of probability. More precisely, we consider a Riemannian manifold rolling against Euclidean space, where the rolling is coupled with random slipping and twisting. The system is modelled by a stochastic differential equation of Stratonovich-type driven by semimartingales, on the orthonormal frame bundle. The stability of the system is examined via large deviations. 
  We prove the large deviation principles for the projection curves on the base manifold and their horizontal lifts respectively, provided that the large deviation holds for the random Euclidean curves as semimartingales. The large deviation results for the case of compact manifolds and two special cases of noncompact manifolds are established.
  \bigskip\\
  \textbf{AMS 2020 Mathematics Subject Classification:} 60H10, 60F10, 58J65, 60G44. \\
  \textbf{Keywords and Phrases:} Large deviations, rolling with slipping and twisting, Cartan's development, controlled stochastic differential equations.
\end{abstract}

\section{Introduction}

Differential geometry has been inseparably related to classical mechanics, due to its broader applications to various mechanical systems. A well-known system 
is the sphere \emph{rolling} on the plane \emph{without} slipping or twisting, which is an early interesting study
in non-holonomic mechanics \cite{Cha97,Cha03}. Nowadays the general systems of rolling a manifold are often studied with contributions from intrinsic Riemannian geometry, sub-Riemannian geometry \cite{Mon02,Sha97} and geometric control theory \cite{AS04}. 
A geometric operation, which is known as \emph{Cartan's development} \cite{Car25,Nom78}, plays an essential role in the rolling model \cite{MG14}.
The concept of development plays a fundamental role in defining Brownian motion on a manifold. This leads to an outbreak of stochastic analysis on Riemannian manifolds in the past decades \cite{Hsu02,Str00}.
The idea of constructing Brownian trajectories on manifold is similar to the procedure of rolling sphere. Intuitively, one can draw a Brownian path $B_t$ in $\R^d$, and then one can consider the system of manifold $M$ rolling against $\R^d$ following the path $B_t$ (see Fig.~\ref{rolling-bm} for a visualization). Although the precise definition uses a less common version of Cartan's development and parallel transport, this simple notion allows one to recover the Laplace-Beltrami operator $\Delta_M$ of the manifold. It is usually interpreted that Brownian paths are the ``integral curves'' for $\Delta_M$. This unprecise assertion vividly introduces the idea that second order differential operators induce ``diffusions'' on the manifold. This point of view has been examined maturely in the study of stochastic differential equations (SDEs) on manifolds \cite{Bis84,IW89}.

\begin{figure}[!bth]
  \centering
  \includegraphics[width=0.8\textwidth]{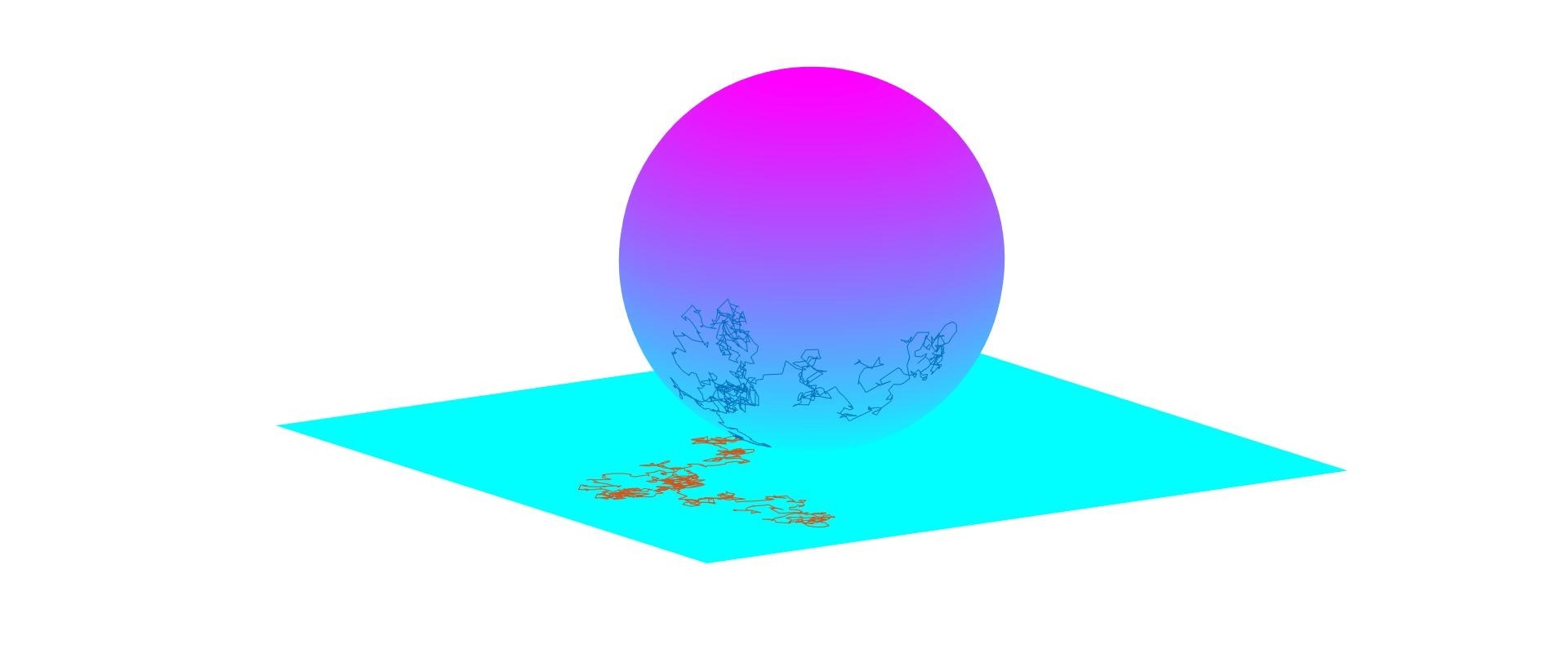}
  \caption{Brownian motion on the sphere $\S^2$ by rolling along a Brownian motion on $\R^2$.} \label{rolling-bm}
\end{figure}

Our aim in the present paper is to model the system of rolling \emph{with} random slipping and twisting in higher dimensions, and then study the stability of the system. We denote by $H_\xi$ the standard horizontal vector field on the orthonormal frame bundle $OM$ corresponding to $\xi\in\R^d$. Given a smooth curve $\gamma = \{\gamma_t\}_{0\le t\le T}$ in Euclidean space $\R^d$, the system of rolling along $\gamma$ without slipping or twisting is modelled by Cartan's development as the following ordinary differential equation (ODE) on $OM$,
\begin{equation}\label{development}
  \dot u_t = H_{\dot\gamma_t}(u_t).
\end{equation}
Projecting the solution curve $u = \{u_t\}_{0\le t\le T}$ onto the base manifold $M$, we get a curve $x = \{x_t\}_{0\le t\le T}$, which is exactly the trace left on the manifold $M$ by rolling $M$ along the pathway $\gamma$ on $\R^d$. If $\gamma$ is a straight line on the Euclidean space, then the projection curve $x$ is a geodesic on $M$ starting from $\pi(u_0)$ with initial velocity $u_0(\dot\gamma_0)$. Let $D = d(d-1)/2$ and let $\{A_1,\cdots, A_D\}$ be an orthonormal basis of $\mathfrak{so}(d)$, the space of skew-symmetric matrices in dimension $d$. For $A\in \mathfrak{so}(d)$, we denote by $A^*$ the fundamental vertical vector field on $OM$ induced by $A$. Let $\{e_1,\cdots,e_d\}$ be the canonical basis of $\R^d$. Denote for shorthand $H_i:= H_{e_i}$. If we add the slipping and twisting ingredients (in a random fashion) into the rolling model, the differential equation on the bundle $OM$ then becomes
\begin{equation}\label{rolling-st}
  d u_t = H_i(u_t)\circ d \tilde\gamma^i_t + A_\alpha^*(u_t)\circ d W_t^\alpha,
\end{equation}
where $\tilde\gamma$ is an equivalent curve involving the slipping constituent, which is regarded as a randomization of the original Euclidean curve $\gamma$, $W = \{W_t\}_{t\ge0}$ is a $D$-dimensional standard Brownian motion, the symbol $\circ$ means that the stochastic differential is in Stratonovich sense. Hereinafter, we use \emph{Einstein's convention} that the repeated indices in a product will be summed automatically. The random twisting is indicated in the second term, i.e., the vertical component, of right hand side of the last equation. Examples for the rolling models with specific patterns of slipping or twisting are given in Section \ref{sec-exmp}.

Recently, there are several works concerned with similar models as \eqref{development} or \eqref{rolling-st}. This includes \cite{Li16a}, where the homogenization of a kind of random perturbed geodesic equation was studied. The author took $\tilde\gamma_t = e_1 t$ in \eqref{rolling-st} and add one more deterministic vertical perturbation on the right hand side, the horizontal and deterministic vertical components are in normal scale while the random vertical components are in `fast' scale. In \cite{ABT15}, the authors introduced a diffusion process with a finite speed of propagation which they call kinetic Brownian motion. This diffusion process is exactly modelled by \eqref{rolling-st} with $\tilde\gamma_t = e_1 t$ as well, but with the family of Lie elements $\{A_\alpha\}_{\alpha=1}^D$ replaced by $\{\tilde A_i\}_{i=2}^d$, $\tilde A_i = e_i \otimes e_1^* - e_1 \otimes e_i^*$.
That is, the kinetic Brownian motion is the trace curve of rolling along the straight line pointing to the first Euclidean direction with unit speed, coupled with certain random twisting. Both papers provide a sort of interpolation between geodesic and Brownian motions. The small mass limit of Langevin equations on a Riemannian manifold was studied in \cite{BHV17}, in the presence of damping and external force. Lifting the resulting stochastic differential equation to the orthogonal frame bundle, it becomes \eqref{development} again, with $\dot\gamma$ solving an SDE on $\R^d$.

We are concerned with a lightly perturbed version of \eqref{rolling-st}, that is, the system of rolling perturbed by small random slipping and twisting, which is modelled by the following stochastic differential equation on $OM$,
\begin{equation*}
  d u^\e_t = H_{i}(u^\e_t) \circ d\gamma^{\e,i}_t + A_\alpha^*(u^\e_t)\circ d W_t^{\e,\alpha},
\end{equation*}
where $\e>0$ is a small parameter, $W^\e := \sqrt\e W$, the curve $\gamma^\e$ is a randomly perturbed version of $\gamma$ indicating the slipping, which is independent of $W^\e$. When $\e\to 0$, the process $W^\e$ converges in distribution to $0$. If in addition, $\gamma^\e$ converges in distribution to the original curve $\gamma$ as $\e\to0$, then the classical stability theory of stochastic differential equations tells that the bundle-valued curve $u^\e$ converges in distribution to the deterministic curve $u$ in \eqref{development} (see \cite[Section IX.6]{JS13}), and then the continuity of the projection of $OM$ to $M$ yields that the trace curve $x^\e$ converges in distribution as well, to the curve $x$ which is the projection of the curve $u$. This means that the stability in distribution sense holds for the rolling systems, in the terminology of dynamics.

A further question is how to qualify the path-wise stability or instability. The theory of \emph{large deviations} shows its power here. Roughly speaking, the theory of large deviations is concerned with the asymptotic estimation of probabilities of rare events. If the large deviation principle (LDP) holds for the trace family $\{x\}_{\e>0}$, then the probability of rare events that the sample path of trace process $x^\e$ is not close to the limit curve $x$ is exponentially small in $\e$. In this sense, we can say that the rolling system is `exponentially' stable. So our \emph{question} is: if the family of Euclidean curves $\{\gamma^\e\}_{\e>0}$ satisfies a large deviation principle, is it true that the family of bundle-valued curves $\{u^\e\}_{\e>0}$ or manifold-valued curves $\{x^\e\}_{\e>0}$ still satisfies a large deviation principle?

We refer to \cite{DZ98,DE11} for more details on the large deviation theory. We would like to point out that the classical large deviation theorems for Brownian motions and random walks were generalized to the setting of complete Riemannian manifolds \cite{KRV18}.

In general, the perturbed curve $\gamma^\e$ is a continuous \emph{semimartingale} (not necessarily Markovian) for each $\e>0$. Therefore, our goal is to prove that the solution family of the following semimartingale-driven stochastic differential equation satisfies a large deviation principle:
\begin{equation}\label{SDE-exmp}
  dX^\e_t = F(X^\e_{t})dY^\e_t, \quad X^\e_0 = x_0\in\R^d,
\end{equation}
provided that the family of noise $\{Y^\e\}$ satisfies the large deviation principle. A similar problem of large deviations for SDE \eqref{SDE-exmp} was investigated in \cite{Gar08,Gan18}. In both papers, the \emph{exponential tightness} assumption on the family $\{(X^\e,Y^\e)\}$ was proposed to prove the LDP for the solution family $\{X^\e\}$, provided that the LDP holds for $\{Y^\e\}$. By the classical large deviation theory, in the presence of exponential tightness of the family $\{(X^\e,Y^\e)\}$, to prove that this family satisfies the large deviation principle with certain rate function, it is enough to assume it holds and then identify the rate function to make sure that this rate function does not depend on the choice of subsequences. The results in those two papers do not completely solve the preceding problem, because the assumption of exponential tightness of $\{(X^\e,Y^\e)\}$ is rather strong and not easy to verify, and it requires additional conditions on the solutions $X^\e$. We will reconsider the large deviations of semimartingale-driven SDEs in the presence of controls, with all assumptions made only in terms of the driven noise and controls (see Section \ref{sec-LDP}). The result of LDP for semimartingale-driven SDEs with no assumptions on solutions $X^\e$ (Proposition \ref{LDP-control}) are new to our knowledge, and are of independent interest.

The main contributions of this paper are Theorem \ref{cpct}, Theorem \ref{non-cpct-1} and Theorem \ref{non-cpct-2}. The first theorem aims to build the large deviations for the rolling problems in the case of compact manifolds, while the other two deal with the case of noncompact manifolds. When the rolled manifold is compact, we can accomplish the task for each Euclidean curve $\gamma^\e$ being general semimartingale. But in the noncompact case, we only treat for the curve $\gamma^\e$ being some special semimartingales. Theorem \ref{non-cpct-1} requires that each Euclidean curve $\gamma^\e$ is locally of finite variation, and Theorem \ref{non-cpct-2} is devoted for each $\gamma^\e$ satisfying an SDE driven by Brownian noise.

The paper is organized as follows. In the next section, we prove the LDP for a class of controlled stochastic differential equations driven by semimartingales. Some technical proofs are left in Appendix. The large deviations for stochastic differential equations of Stratonovich-type are also given there. In Sections \ref{sec-LDP-rolling}, we establish various large deviation principles for the rolling systems. Subsection \ref{sec-cpct} deals with the rolling on compact manifolds, the random Euclidean curves are general continuous semimartingales. Subsection \ref{sec-noncpct} is devoted to the noncompact case, where Euclidean curves are assumed to be either locally of finite variation or driven by stochastic differential equations. Finally, Section \ref{sec-exmp} is reserved for several typical examples of rolling systems.

\section{Large deviations for controlled stochastic differential equations}\label{sec-LDP}

The aim of this section is to present a large deviation principle for the controlled stochastic differential equation. The reason why the `control' appears is that the horizontal lifts of the projections of solution processes satisfy a type of stochastic differential equations with controls, as we will see in Lemma \ref{transform}.

First of all, we recall some definitions in the large deviation theory. Let $\X$ be a topological space equipped with Borel $\sigma$-algebra $\B(\X)$. A \emph{good rate function} $I$ is a lower semicontinuous mapping $I:\X\to [0,\infty]$ such that for all $\alpha\in [0,\infty)$, the level set $\Phi_I(\alpha):=\{x\in\X: I(x)\le\alpha\}$ is a closed, compact subset of $\X$. A family of probability measures $\{\mu_\e\}_{\e>0}$ on $(\X,\B(\X))$ is said to satisfy the \emph{large deviation principle} (LDP) with a good rate function $I$ if, for all $\Gamma\in\B(\X)$,
\begin{equation*}
  -\inf_{x\in\Gamma^\circ} I(x) \le \liminf_{\e\to0} \e\log \mu_\e(\Gamma) \le \limsup_{\e\to0} \e\log \mu_\e(\Gamma) \le -\inf_{x\in\overline\Gamma} I(x).
\end{equation*}
A family of $\X$-valued random elements $\{X^\e\}_{\e>0}$ is said to satisfies the large deviation principle if the family of probability measures induced by each $X^\e$ on $\X$ satisfies the large deviation principle. We refer to \cite{DZ98} for more details of the large deviation theory.

\subsection{Exponential tightness}

A family of probability measures $\{\mu_\e\}_{\e>0}$ on $(\X,\B(\X))$ is said to be \emph{exponentially tight} if for every $\alpha<\infty$, there exists a compact set $K \subset \X$ such that
\begin{equation*}
  \limsup_{\e\to0} \e \log \mu_\e(K^c) < -\alpha.
\end{equation*}
Or equivalently, for every $0<\delta<1$, there exists a compact set $K\subset \X$ and $\e_0>0$, such that for all $0<\e<\e_0$,
\begin{equation*}
  [\mu_\e(K^c)]^\e < \delta.
\end{equation*}
Similarly, a family of $\X$-valued random elements $\{X^\e\}_{\e>0}$ is said to be exponentially tight if the family of induced probability measures on $\X$ is exponentially tight. If $\X$ has a countable base, from \cite[Lemma 4.1.23]{DZ98} we know that an exponentially tight sequence of measures has a subsequence that satisfies the large deviation principle with some good rate function (see \cite[Theorem (P)]{Puh91}). In case of $\X=\R^d$, the exponential tightness is equivalent to the exponential stochastic boundness found in \cite{Gar08}.

Now denote by $\C^d:=\C(\R_+;\R^d)$ the space of all $\R^d$-valued continuous functions on $\R_+$, equipped with the local uniform topology which is the topology of uniform convergence on compact intervals. It is well known that $\C^d$ is a Polish space. We associate with $\C^d$ the Borel $\sigma$-algebra $\B(\C^d)$. The exponential tightness of a family of probability measures on $(\C^d,\B(\C^d))$ is implied by the LDP with a good rate function (see, e.g., \cite[Lemma 3.5]{FK06}). For $\rho>0$, $T>0$ and $x = \{x(t)\}_{t\ge0}\in \C^d$, define the uniform norm and modulus of continuity
\begin{equation*}
  \|x\|_T := \sup_{0\le t\le T}|x(t)|, \quad
  w_T(x,\rho) := \sup_{\begin{subarray}{c}
                        |t-s|\le\rho \\
                        0\le t,s \le T
                      \end{subarray}
  } |x(t)-x(s)|.
\end{equation*}

The following lemma is a criterion for the exponential tightness of probability measures in $\C^d$, referring to \cite[Theorem 4.2]{Puh91}.
\begin{lemma}\label{exp-tight}
  A family of probability measures $\{\mu_\e\}_{\e>0}$ on $\C^d$ is exponentially tight if and only if the following two statements hold:

  (i). for every $T>0$,
  \begin{equation*}
    \lim_{a\to\infty} \limsup_{\e\to0} \e\log \mu_\e\left( x\in \C^d: \|x\|_T \ge a \right) = -\infty,
  \end{equation*}

  (ii). for every $T>0$, $\eta>0$,
  \begin{equation*}
    \lim_{\rho\to0} \limsup_{\e\to0} \e \log \mu_\e\left( x\in \C^d: w_T(x,\rho) \ge\eta \right) = -\infty.
  \end{equation*}
\end{lemma}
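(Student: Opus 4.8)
The plan is to treat this as the large‑deviation version of the Arzel\`a--Ascoli characterisation of compactness in $\C^d=\C(\R_+;\R^d)$. I would first record the topological fact I need: a set $K\subseteq\C^d$ is relatively compact if and only if, for every $T>0$, $\sup_{x\in K}\|x\|_T<\infty$ and $\lim_{\rho\to0}\sup_{x\in K}w_T(x,\rho)=0$ (uniform boundedness and equicontinuity of the restrictions to $[0,T]$, together with a diagonal argument over $T\in\N$). I would also note that $x\mapsto\|x\|_T$ and $x\mapsto w_T(x,\rho)$ are continuous on $\C^d$, so the events appearing in (i)--(ii) are Borel and all sublevel sets of these functionals are closed.

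For \emph{necessity}, suppose $\{\mu_\e\}$ is exponentially tight and fix $\alpha<\infty$; pick a compact $K$ with $\limsup_{\e\to0}\e\log\mu_\e(K^c)<-\alpha$. Since $K$ is compact, $a_\star:=\sup_{x\in K}\|x\|_T<\infty$, so $\{x:\|x\|_T\ge a\}\subseteq K^c$ for $a>a_\star$, whence $\limsup_{\e\to0}\e\log\mu_\e(\|x\|_T\ge a)\le\limsup_{\e\to0}\e\log\mu_\e(K^c)<-\alpha$; as this is nonincreasing in $a$ and $\alpha$ is arbitrary, (i) follows. Likewise equicontinuity of $K$ on $[0,T]$ gives $\sup_{x\in K}w_T(x,\rho)<\eta$ for $\rho$ small, so $\{x:w_T(x,\rho)\ge\eta\}\subseteq K^c$, and (ii) follows on letting $\rho\to0$ and $\alpha\to\infty$.

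For \emph{sufficiency}, assuming (i)--(ii), fix $\alpha<\infty$. Using (i) I would choose, for each $n\in\N$, a number $a_n<\infty$ with $\limsup_{\e\to0}\e\log\mu_\e(\|x\|_n\ge a_n)<-(\alpha+n)$, and using (ii), for each $n,k\in\N$, a number $\rho_{n,k}\in(0,1]$ with $\limsup_{\e\to0}\e\log\mu_\e(w_n(x,\rho_{n,k})\ge 1/k)<-(\alpha+n+k)$, and then set
\begin{equation*}
  K:=\bigcap_{n\ge1}\{x\in\C^d:\|x\|_n\le a_n\}\ \cap\ \bigcap_{n,k\ge1}\{x\in\C^d:w_n(x,\rho_{n,k})\le 1/k\}.
\end{equation*}
This $K$ is closed and, by the criterion above, relatively compact (for $n\ge T$ one has $\sup_{x\in K}\|x\|_T\le a_n<\infty$ and $\sup_{x\in K}w_T(x,\rho_{n,k})\le 1/k$ for every $k$), hence compact. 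A union bound then gives
\begin{equation*}
  \mu_\e(K^c)\ \le\ \sum_{n\ge1}\mu_\e(\|x\|_n\ge a_n)+\sum_{n,k\ge1}\mu_\e(w_n(x,\rho_{n,k})\ge 1/k),
\end{equation*}
and, modulo the uniformity point discussed below, summing the (rapidly decaying) series yields $\limsup_{\e\to0}\e\log\mu_\e(K^c)\le-\alpha$; letting $\alpha\to\infty$ gives exponential tightness.

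The hard part will be that last step. Each of the countably many estimates on $\mu_\e(\|x\|_n\ge a_n)$ and on $\mu_\e(w_n(x,\rho_{n,k})\ge1/k)$ is only valid for $\e$ below an $(n,k)$-dependent threshold, so turning the union bound into a single inequality $\mu_\e(K^c)\le e^{-\beta/\e}$ valid for \emph{all} sufficiently small $\e$ is not automatic. The way I would handle it is to build the parameters $a_n$ and $\rho_{n,k}$ inductively, taking them large (resp.\ small) enough at each stage --- enlarging $a_n$ only raises its threshold --- so that for each small $\e$ all but finitely many conditions are already active and the leftover tail of the union contributes negligibly; along a sequence $\e\to0$ this reduces to controlling finitely many of the (individually tight) measures $\mu_\e$. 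This bookkeeping is precisely the quantitative content of \cite[Theorem 4.2]{Puh91}, which I would either reproduce or simply invoke.
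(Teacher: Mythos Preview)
Your proposal is correct and aligned with the paper: the paper does not actually prove this lemma but simply refers to \cite[Theorem 4.2]{Puh91}, and your sketch is precisely the standard Arzel\`a--Ascoli-based argument behind that reference, which you yourself end by citing. The only difference is that you spell out the necessity/sufficiency details rather than deferring entirely to Puhalskii.
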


\subsection{Large deviation results}

For each $\e>0$, we have a filtered probability space $(\Omega^\e,\F^\e,\{\F^\e_t\}_{t\ge0},\P^\e)$ endowed with an $n$-dimensional continuous semimartingale $Y^\e$ and an $l$-dimensional continuous adapted process $U^\e$. We also have a locally Lipschitz function $F: \R^d\times\R^l \to \R^d\otimes (\R^n)^*$, $(x,u)\mapsto (F^j_i(x,u); 1\le i\le n, 1\le j\le d)$ with linear growth, so that each controlled stochastic differential equation
\begin{equation}\label{SDE-Ito}
  dX^\e_t = F(X^\e_{t},U^\e_{t}) dY^\e_t, \quad X^\e_0 = x_0\in\R^d,
\end{equation}
has a unique global solution $X^\e$.

We assume that for each $\e>0$, the canonical decomposition of the continuous semimartingale $Y^\e$ is
\begin{equation*}
  Y^\e = M^\e + A^\e,
\end{equation*}
where $M^\e$ is a continuous local martingale, and $A^\e$ is a continuous predictable process with locally finite variation. We associate with $Y^\e$ an increasing process $G(Y^\e)$ via
\begin{equation}\label{G}
  G(Y^\e)_t := |V(A^\e)|_t + \frac{1}{\e}|\langle M^\e, M^\e \rangle|_t,
\end{equation}
where $V(A^\e)$ denotes the variation process associated to $A^\e$, the bracket $\langle M^\e,M^\e\rangle$ denotes the quadratic variation of $M^\e$, which is an $\R^n\otimes\R^n$-valued process.


For functions $f\in \C^{d\times n},y\in \C^n$ with $y$ of finite variation locally, we define
$$(f\cdot y)(t):=\lim_{\|\Delta\|\to 0} \sum_{i} f(t_i) (y(t_{i+1}) - y(t_i)),$$
where $\Delta = \{t_i\}_{i=0}^k$ is a partition of the interval $[0,t]$ with $0=t_0\le t_1\le \cdots \le t_k=t$, and $\|\Delta\| := \max_{1\le i\le k}|t_{i+1}-t_i|$ denotes the mesh of $\Delta$.

We are now prepared to present some large deviation results for the controlled system \eqref{SDE-Ito}. The proofs for the forthcoming Proposition \ref{LDP-control} and Corollary \ref{local-Lip} are quite involved, and will be left into Appendix.
\begin{proposition}\label{LDP-control}
  Let $F$ be bounded and global Lipschitz and $X^\e$ be the solution of \eqref{SDE-Ito} for each $\e>0$. Assume the family $\{G(Y^\e)_t\}_{\e>0}$ defined in \eqref{G} is exponentially tight for each $t>0$, and the family $\{(Y^\e,U^\e)\}_{\e>0}$ satisfies the LDP with a good rate function $I'$. 
  Then

  (i). the family $\{(X^\e,Y^\e,U^\e)\}_{\e>0}$ satisfies the LDP with a good rate function $I$,

  (ii). the rate function $I$ is given by
  \begin{equation}\label{rate-func}
    I(x,y,u) =
    \begin{cases}
      I'(y,u), & x = F(x,u)\cdot y, y \text{ is locally of finite variation}, \\
      \infty, & \text{otherwise}.
    \end{cases}
  \end{equation}
  In particular, the family $\{X^\e\}_{\e>0}$ satisfies the LDP with the following good rate function:
  \begin{equation*}
    I^\flat(x) =\inf\{ I'(y,u): x = F(x,u)\cdot y, y \text{ is locally of finite variation}\}.
  \end{equation*}
\end{proposition}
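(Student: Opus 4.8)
The plan is to prove this via the contraction-principle/continuous-mapping philosophy, combined with an exponential-tightness argument, exactly as suggested by the fact that the hypotheses bundle together exponential tightness of $\{G(Y^\e)_t\}$ and the LDP for $\{(Y^\e,U^\e)\}$. First I would establish exponential tightness of the full triple $\{(X^\e,Y^\e,U^\e)\}_{\e>0}$ in $\C^d\times\C^n\times\C^l$. Tightness of the $(Y^\e,U^\e)$-component is free, since the LDP with a good rate function implies exponential tightness. The work is to control $X^\e$: writing $X^\e_t = x_0 + \int_0^t F(X^\e_s,U^\e_s)\,dA^\e_s + \int_0^t F(X^\e_s,U^\e_s)\,dM^\e_s$ and using that $F$ is bounded, the finite-variation part is Lipschitz-controlled by $V(A^\e)$ and hence by $G(Y^\e)$, while for the martingale part one uses exponential (Bernstein-type) martingale inequalities: for a bounded integrand the stochastic integral $N^\e := \int F(X^\e,U^\e)\,dM^\e$ has $\langle N^\e,N^\e\rangle$ controlled by $\e\,G(Y^\e)$, so $\e\log\P(\|N^\e\|_T\ge a)$ and $\e\log\P(w_T(N^\e,\rho)\ge\eta)$ are driven down by the exponential tightness of $G(Y^\e)_T$. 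Verifying conditions (i) and (ii) of Lemma \ref{exp-tight} for $\{X^\e\}$ in this way — and arguing jointly for the triple — is the main technical obstacle, and this is presumably the part deferred to the Appendix.

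Once exponential tightness of $\{(X^\e,Y^\e,U^\e)\}$ is in hand, by \cite[Lemma 4.1.23]{DZ98} (or \cite[Theorem (P)]{Puh91}) every subsequence has a further subsequence satisfying the LDP with some good rate function. To conclude (i) and identify the rate function as in \eqref{rate-func} it suffices to show that along any such subsequential LDP the rate function is forced to be the claimed $I$, independently of the subsequence. For the upper-bound/lower-bound matching I would use the following: the marginal LDP of $\{(Y^\e,U^\e)\}$ with rate $I'$ is already fixed, so the projection of any subsequential limit rate function onto the $(y,u)$-coordinates must be $I'$; it then remains to show that the $X$-coordinate is, under the limiting rate, a deterministic function of $(y,u)$ — namely the solution of the fixed-point equation $x = F(x,u)\cdot y$ when $y$ is locally of finite variation, and that such $(y,u)$ with finite $I'$-value automatically has $y$ locally of finite variation so the map is well defined (this is where one must check that finite rate forces $y$ to have the finite-variation regularity, a standard fact for LDPs built on $G(Y^\e)$ since the martingale part is exponentially negligible). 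Concretely, I would take a ``continuity along the contraction'' route: approximate $(Y^\e,U^\e)$ in the LDP by piecewise-linear (finite-variation) versions, note that on finite-variation paths the Itô integral $\int F(X^\e,U^\e)\,dY^\e$ reduces to the Riemann–Stieltjes integral $(F(X,U)\cdot y)$, and show the solution map $y\mapsto$ solution of $x = F(x,u)\cdot y$ is continuous in the local uniform topology on the relevant compact level sets (Gronwall, using Lipschitz and boundedness of $F$). Combining this with a ``superexponential approximation'' estimate — that the genuine solution $X^\e$ is superexponentially close to the solution driven by the approximated noise, again via the exponential martingale inequality and exponential tightness of $G(Y^\e)$ — and the extended contraction principle \cite[Theorem 4.2.23]{DZ98}, yields the LDP for $\{(X^\e,Y^\e,U^\e)\}$ with precisely the rate $I$ of \eqref{rate-func}.

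For the final ``in particular'' statement, the LDP for $\{X^\e\}$ with rate function $I^\flat(x)=\inf\{I'(y,u): x=F(x,u)\cdot y,\ y\text{ locally of finite variation}\}$ follows immediately from part (i) by the contraction principle under the continuous projection $(x,y,u)\mapsto x$ from $\C^d\times\C^n\times\C^l$ onto $\C^d$: the image rate function is exactly the infimum of $I(x,y,u)=I'(y,u)$ over all $(y,u)$ with $x=F(x,u)\cdot y$ and $y$ of locally finite variation, and $I^\flat$ is a good rate function because $I$ is and projections of good rate functions under continuous maps remain good. The one point requiring care here is that $F$ depends on $x$ itself in the fixed-point relation $x=F(x,u)\cdot y$; I would handle this by noting that for $y$ of locally finite variation this is an ODE/Stieltjes integral equation in $x$ with Lipschitz right-hand side, so it has a unique solution for each $(y,u)$, making the correspondence $(y,u)\mapsto x$ genuinely a well-defined (measurable, indeed continuous on level sets) map, which is all the contraction principle requires.
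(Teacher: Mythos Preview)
Your overall architecture matches the paper's: first establish exponential tightness of $\{(X^\e,Y^\e,U^\e)\}$, then along any subsequential LDP identify the rate function uniquely as \eqref{rate-func}. The technical implementation of both steps, however, differs from what the paper actually does, and your sketch has a couple of soft spots.

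\textbf{Exponential tightness.} The paper does not split $X^\e$ into finite-variation and martingale parts and apply Bernstein-type inequalities. Instead it (a) localizes by stopping when $G(Y^\e)$ exceeds a level $p$ (Step~2), reducing to uniformly bounded $G(Y^\e)$ --- a step you do not mention; (b) introduces Euler-type discretizations $X^{\e,m}$ with frozen integrand $F(X^{\e,m}_{[t]_m},U^\e_{[t]_m})$ but the \emph{full} driver $Y^\e$ (equation \eqref{SDE-app}), so that each $X^{\e,m}$ is a \emph{continuous} function of $(Y^\e,U^\e)$ and inherits exponential tightness from the assumed LDP; and (c) proves via a Lyapunov estimate with test functions $(\rho^2+|z|^2)^{1/\e}$ (Lemma~\ref{est-8}) that $X^{\e,m}$ are exponentially good approximations of $X^\e$ (Lemma~\ref{exp-good-app}, Corollary~\ref{exp-tight-X}). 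Your direct route has a difficulty at condition (ii) of Lemma~\ref{exp-tight}: the increment $\int_s^t F\,dA^\e$ is bounded by $\|F\|_0\,(V(A^\e)_t-V(A^\e)_s)$, not by $\|F\|_0\,|A^\e_t-A^\e_s|$, and exponential tightness of $G(Y^\e)_T$ does not by itself control the modulus of continuity of $V(A^\e)$. The paper's discretization sidesteps this entirely, because $X^{\e,m}$ depends only on $(Y^\e,U^\e)$, whose path-space tightness is given.

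\textbf{Rate-function identification.} Here too the paper takes a different route from your extended-contraction/approximation scheme. Assuming a subsequential LDP for $(X^\e,Y^\e,U^\e)$, it invokes an LDP for stochastic integrals due to Garcia (Lemma~\ref{LDP-integral}) to obtain an LDP for the augmented family $(X^\e,Y^\e,U^\e,F(X^\e,U^\e))$; the rate function of the latter then has two expressions --- one from Lemma~\ref{LDP-integral} via $X^\e=F(X^\e,U^\e)\cdot Y^\e$, one from the contraction $(x,y,u)\mapsto F(x,u)$ --- and comparing them forces \eqref{rate-func}, using uniqueness of the Lipschitz ODE $x=F(x,u)\cdot y$. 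Your idea of approximating the driver $Y^\e$ by piecewise-linear paths and solving the resulting ODE is a Wong--Zakai approximation; for It\^o equations this converges to the Stratonovich solution, so a correction term in $\langle M^\e\rangle$ appears that you have not accounted for (it is exponentially negligible here, but that is extra work). More fundamentally, the ``solution map'' $(y,u)\mapsto x$ you want to use for the contraction is only defined on finite-variation $y$, whereas $Y^\e$ is not of finite variation; the paper's Euler scheme keeps $Y^\e$ as the integrator and discretizes the integrand instead, which is why it works. Finally, your claim that finite $I'(y,u)$ forces $y$ to be locally of finite variation is not a ``standard fact'' in this generality; in the paper it is a consequence of Lemma~\ref{LDP-integral}.
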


We could relax the function $F$ to be locally Lipschitz, but an additional condition on $X^\e$ is required to obtain the LDP, as is shown in the next corollary.

\begin{corollary}\label{local-Lip}
  Let the assumptions in Proposition \ref{LDP-control} hold, except that $F$ is only locally Lipschitz. Assume in addition, for every $t>0$, the family $\{\sup_{0\le s\le t}|X^\e_s|\}_{\e>0}$ is exponentially tight. Then the same results as in Proposition \ref{LDP-control} hold.
\end{corollary}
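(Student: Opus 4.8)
The plan is to reduce the locally Lipschitz case to the globally Lipschitz case of Proposition \ref{LDP-control} by a localization/truncation argument, using the extra exponential tightness of $\{\sup_{0\le s\le t}|X^\e_s|\}_{\e>0}$ to control the error introduced by the truncation. For each $R>0$, fix a smooth cutoff $\phi_R:\R^d\to[0,1]$ with $\phi_R\equiv 1$ on the ball $\{|x|\le R\}$ and $\phi_R\equiv 0$ off $\{|x|\le R+1\}$, and set $F_R(x,u):=\phi_R(x)F(x,u)$. Since $F$ is locally Lipschitz with linear growth, $F_R$ is bounded and globally Lipschitz, so Proposition \ref{LDP-control} applies to the truncated equation $dX^{\e,R}_t=F_R(X^{\e,R}_t,U^\e_t)\,dY^\e_t$, $X^{\e,R}_0=x_0$: the family $\{(X^{\e,R},Y^\e,U^\e)\}_{\e>0}$ satisfies the LDP with the good rate function $I_R$ obtained from \eqref{rate-func} with $F$ replaced by $F_R$.

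The first key step is to observe that $X^{\e,R}$ and $X^\e$ agree up to the first exit time of $X^\e$ (equivalently $X^{\e,R}$) from $\{|x|\le R\}$, by pathwise uniqueness of the SDEs, both coefficients coinciding on that ball. Hence, on the event $\{\sup_{0\le s\le t}|X^\e_s|< R\}$, the paths $X^\e$ and $X^{\e,R}$ coincide on $[0,t]$. Combining this with the assumed exponential tightness of $\{\sup_{0\le s\le t}|X^\e_s|\}$, for any $\alpha<\infty$ one can choose $R=R(\alpha)$ and $\e_0$ so that $\P^\e(\sup_{0\le s\le t}|X^\e_s|\ge R)$ has $\e$-log-probability below $-\alpha$ for $\e<\e_0$; letting $t\to\infty$ along a sequence and intersecting, this gives that $X^\e$ and $X^{\e,R}$ are exponentially equivalent (in the local uniform topology on $\C^d$) in the sense of \cite[Definition 4.2.10]{DZ98} — more precisely, one gets for each $R$ a family that is exponentially equivalent on compact time intervals, and then takes $R\to\infty$. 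The second key step is then the standard fact (e.g. \cite[Theorem 4.2.13]{DZ98}) that exponentially equivalent families share the same LDP, together with the observation that the joint family $\{(X^{\e,R},Y^\e,U^\e)\}$ is exponentially equivalent to $\{(X^\e,Y^\e,U^\e)\}$. This transfers the LDP to $\{(X^\e,Y^\e,U^\e)\}$; the rate function is $I=\lim_{R\to\infty}I_R$ in the appropriate sense, but since $I_R(x,y,u)$ with $x=F_R(x,u)\cdot y$ forces $x$ to stay in $\{|x|\le R+1\}$, and on such $x$ the constraint $x=F_R(x,u)\cdot y$ reads $x=F(x,u)\cdot y$ once $R$ is large enough relative to $\|x\|$, one checks that $I$ is exactly \eqref{rate-func} with the original $F$. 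The statement about $\{X^\e\}$ and $I^\flat$ follows by the contraction principle applied to the projection $(x,y,u)\mapsto x$, exactly as in Proposition \ref{LDP-control}.

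The main obstacle is the bookkeeping in passing from "$X^\e$ and $X^{\e,R}$ coincide with overwhelming probability" to genuine exponential equivalence in the local uniform topology of $\C^d$, and then correctly interchanging the two limits $R\to\infty$ and $\e\to0$: one must verify that the candidate rate function produced at each truncation level stabilizes and that the approximating LDPs assemble into an LDP for the untruncated family. The cleanest route is probably to invoke a limit-of-exponentially-equivalent-families criterion: show that $\{(X^\e,Y^\e,U^\e)\}_\e$ is an exponentially good approximation of itself by the truncated families uniformly as $R\to\infty$ (using exponential tightness to make the defect uniform), then apply \cite[Theorem 4.2.16]{DZ98}. One should also double-check that the rate functions $I_R$ are increasing in $R$ and that their pointwise limit is lower semicontinuous with compact level sets — this is where the linear-growth bound on $F$, inherited by the constraint $x=F(x,u)\cdot y$, does the work of keeping the level sets compact.
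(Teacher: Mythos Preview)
Your truncation step contains a gap: you set $F_R(x,u)=\phi_R(x)F(x,u)$ and claim this is bounded and globally Lipschitz, but $\phi_R$ only cuts off in $x$. Since $F$ is merely locally Lipschitz with linear growth in $(x,u)$, on the slab $\{|x|\le R+1\}\times\R^l$ the function $F$ can grow without bound in $u$ and need not be globally Lipschitz in $u$; hence $F_R$ need not satisfy the hypotheses of Proposition~\ref{LDP-control}. You need to truncate jointly in $(x,u)$, exactly as the paper does by composing $F$ with a bounded smooth map $f(x,u)$ equal to the identity on $\{|x|\le K,\ |u|\le K\}$.

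More substantially, your route---proving a full LDP for each truncated family and then assembling them via exponential equivalence and \cite[Theorem~4.2.16]{DZ98}---is workable but considerably heavier than what is needed, and the obstacles you flag (interchanging $R\to\infty$ and $\e\to0$, stabilization and goodness of the limiting rate function) are real and not dispatched in your sketch. The paper sidesteps all of this by observing that Step~1 of the proof of Proposition~\ref{LDP-control} (the identification of the rate function via Lemma~\ref{LDP-integral} and the contraction principle) never uses the global Lipschitz or boundedness of $F$: it only uses that $\{(X^\e,Y^\e,U^\e)\}$ is exponentially tight. Thus the entire task reduces to proving exponential tightness of $\{(X^\e,Y^\e,U^\e)\}$. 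For that, one first treats the case where $X^\e$ and $U^\e$ are uniformly bounded by composing with a smooth bounded $f$ as above (so Proposition~\ref{LDP-control} applies directly and gives exponential tightness), and then removes the boundedness assumption by the same stopping-time localization as in Step~2 of Proposition~\ref{LDP-control}, with $\sup_{0\le s\le t}(|X^\e_s|+|U^\e_s|)$ playing the role of $G(Y^\e)_t$. No limit of rate functions is ever needed.
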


As we mentioned in the introduction, the rolling process is set up on the orthonormal frame bundle through an SDE of Stratonovich-type. So it is more natural to consider the following controlled SDE of Stratonovich-type:
\begin{equation}\label{SDE-Stra}
  dX^\e_t = F(X^\e_{t},U^\e_{t})\circ dY^\e_t, \quad X^\e_0 = x_0.
\end{equation}
The following LDP result plays an essential role in the next section.
\begin{corollary}\label{exp-tight-Strat}
  Let $F$ be bounded and global Lipschitz with $\nabla F$ also global Lipschitz. Let $X^\e$ be the solution of \eqref{SDE-Stra} for each $\e>0$.
  Assume the family $\{G(Y^\e)_t\}_{\e>0}$ is exponentially tight for each $t>0$, and the family $\{(Y^\e,U^\e,\langle M^{\e}, M^{\e}\rangle)\}_{\e>0}$ satisfies the LDP with a good rate function $I'':\R^n \times \R^l \times (\R^n\otimes\R^n) \to [0,\infty]$. 
  Then

  (i). the family $\{(X^\e,Y^\e,U^\e)\}_{\e>0}$ satisfies the LDP with a good rate function $I$,

  (ii). the rate function $I$ is given by
  \begin{equation*}
    I(x,y,u) =
    \begin{cases}
      \inf\{I''(y,u,q): x = F(x,u)\cdot y + \frac{1}{2}(F^j \otimes\partial_{x_j} F)(x,u)\cdot q, q \text{ is locally of finite variation}\}, & \\
      \qquad y \text{ is locally of finite variation}, & \\
      \infty, \text{otherwise}. &
    \end{cases}
  \end{equation*}
  In particular, the family $\{X^\e\}_{\e>0}$ satisfies the LDP with the following good rate function:
  \begin{equation*}
    I^\flat(x) =\inf\{I''(y,u,q): x = F(x,u)\cdot y + \textstyle{\frac{1}{2}}(F^j \otimes\partial_{x_j} F)(x,u)\cdot q, y  \text{ and } q \text{ are locally of finite variation}\}.
  \end{equation*}
\end{corollary}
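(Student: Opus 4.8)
The plan is to reduce Corollary \ref{exp-tight-Strat} to Proposition \ref{LDP-control} by enlarging the driving noise so that the Stratonovich correction gets absorbed into the coefficient. First I would pass from \eqref{SDE-Stra} to its It\^o form: since only the local-martingale part of $Y^\e$ contributes to cross-variations and $d\langle (X^\e)^k,(Y^\e)^i\rangle_t = F^k_m(X^\e_t,U^\e_t)\,d\langle (M^\e)^m,(M^\e)^i\rangle_t$ for a solution of an equation of the type \eqref{SDE-Ito}, the usual It\^o--Stratonovich conversion shows that $X^\e$ solves \eqref{SDE-Stra} if and only if
\begin{equation*}
  dX^\e_t = F(X^\e_t,U^\e_t)\,dY^\e_t + \tfrac12\,(F^j\otimes\partial_{x_j}F)(X^\e_t,U^\e_t)\,d\langle M^\e,M^\e\rangle_t .
\end{equation*}
(When $U^\e$ is merely continuous and adapted, so that $F(X^\e_\cdot,U^\e_\cdot)$ need not be a semimartingale, this It\^o equation is taken as the definition of \eqref{SDE-Stra}; it has a unique global solution because the coefficient $\widetilde F$ introduced below is global Lipschitz.) Now set $Z^\e := (Y^\e,\langle M^\e,M^\e\rangle)$, an $(n+n^2)$-dimensional continuous semimartingale whose canonical decomposition is $Z^\e = (M^\e,0) + (A^\e,\langle M^\e,M^\e\rangle)$ with predictable finite-variation part, and put $\widetilde F(x,u):=\big(F(x,u),\,\tfrac12(F^j\otimes\partial_{x_j}F)(x,u)\big)$, so that the displayed equation reads $dX^\e_t = \widetilde F(X^\e_t,U^\e_t)\,dZ^\e_t$, an equation of the form \eqref{SDE-Ito} with driving semimartingale $Z^\e$ and control $U^\e$.

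The second step is to verify the hypotheses of Proposition \ref{LDP-control} for this augmented system. Regularity of $\widetilde F$: since $F$ is bounded and global Lipschitz, $\nabla F$ is bounded, and as $\nabla F$ is in addition global Lipschitz, the product $F^j\otimes\partial_{x_j}F$ is bounded and global Lipschitz; hence $\widetilde F$ is bounded and global Lipschitz. Exponential tightness of $\{G(Z^\e)_t\}$: from \eqref{G},
\begin{equation*}
  G(Z^\e)_t = |V(A^\e)|_t + |V(\langle M^\e,M^\e\rangle)|_t + \tfrac1\e|\langle M^\e,M^\e\rangle|_t,
\end{equation*}
and by Kunita--Watanabe $|V(\langle M^\e,M^\e\rangle)|_t \le C\,|\langle M^\e,M^\e\rangle|_t \le C\e\,G(Y^\e)_t$, so $G(Z^\e)_t \le (1+C)\,G(Y^\e)_t$ for $\e\le1$; hence exponential tightness of $\{G(Y^\e)_t\}$ carries over to $\{G(Z^\e)_t\}$. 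Finally, $(Z^\e,U^\e)$ is a coordinate permutation of $(Y^\e,U^\e,\langle M^\e,M^\e\rangle)$, so the assumed LDP for the latter with good rate function $I''$ gives the LDP for $\{(Z^\e,U^\e)\}_{\e>0}$ with the good rate function $(y,q,u)\mapsto I''(y,u,q)$.

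Proposition \ref{LDP-control} then yields that $\{(X^\e,Z^\e,U^\e)\}_{\e>0}$ satisfies the LDP with a good rate function that equals $I''(y,u,q)$ on triples $(x,(y,q),u)$ for which $(y,q)$ is locally of finite variation (equivalently, both $y$ and $q$ are) and $x = F(x,u)\cdot y + \tfrac12(F^j\otimes\partial_{x_j}F)(x,u)\cdot q$, and $+\infty$ otherwise. Applying the contraction principle to the continuous projections $(x,y,q,u)\mapsto(x,y,u)$ and $(x,y,q,u)\mapsto x$ then turns this into the claimed rate functions for $\{(X^\e,Y^\e,U^\e)\}_{\e>0}$ and for $\{X^\e\}_{\e>0}$ — the infimum over $q$ (and, for the second, additionally over $(y,u)$) reproduces the stated formulas — and goodness of the rate functions is preserved under the contraction principle. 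I expect the only genuinely delicate point to be the first step: making the It\^o--Stratonovich passage rigorous, together with the existence/uniqueness and semimartingale bookkeeping it entails, when the control $U^\e$ is only continuous and adapted; once \eqref{SDE-Stra} is read through its It\^o form, everything afterwards is the two routine estimates above plus the contraction principle.
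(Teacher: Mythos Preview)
Your proposal is correct and follows essentially the same approach as the paper: convert \eqref{SDE-Stra} to It\^o form, absorb the correction term by passing to the augmented driver $Z^\e=(Y^\e,\langle M^\e,M^\e\rangle)$ with coefficient $\widetilde F=(F,\tfrac12 F^j\otimes\partial_{x_j}F)$, verify the hypotheses of Proposition~\ref{LDP-control} for this system, and finish with the contraction principle. The only noteworthy difference is in the bookkeeping for exponential tightness of $\{G(Z^\e)_t\}$: the paper deduces exponential tightness of $\{|\langle M^\e,M^\e\rangle|_t\}$ from the assumed LDP on $\langle M^\e,M^\e\rangle$ and then works with $|V(A^\e)|+(\tfrac1\e+1)|\langle M^\e,M^\e\rangle|$, whereas you bound $|V(\langle M^\e,M^\e\rangle)|_t\le C|\langle M^\e,M^\e\rangle|_t\le C\e\,G(Y^\e)_t$ directly and obtain $G(Z^\e)_t\le(1+C)G(Y^\e)_t$; both routes are valid and equally short.
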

\begin{proof}
  We can rewrite the Stratonovich-type SDE as
  \begin{equation*}
    d\binom{X^\e_t}{U^\e_t} = \left(
                                \begin{array}{cc}
                                  F(X^\e_t,U^\e_t) & 0 \\
                                  0 & \I_l \\
                                \end{array}
                              \right) \circ d\binom{Y^\e_t}{U^\e_t},
  \end{equation*}
  where $\I_l$ denotes the $l\times l$ identity matrix. Then using the decomposition $Y^\e = M^\e+A^\e$, we have
  \begin{equation*}
    dX^{\e,i}_t = F^i_j(X^\e_{t},U^\e_{t})dM^{\e,j}_t + F^i_j(X^\e_{t},U^\e_{t})dA^{\e,j}_t + \frac{1}{2} (F_k^j \partial_{x_j} F_h^i)(X^\e_{t},U^\e_{t})d\langle M^{\e,k}, M^{\e,h}\rangle_t,
  \end{equation*}
  that is,
  \begin{equation}\label{SDE-str}
    dX^\e_t = \left( F(X^\e_t,U^\e_t) \quad \textstyle{\frac{1}{2}} (F^j \otimes \partial_{x_j} F)(X^\e_{t},U^\e_{t}) \right) d\binom{Y^\e_t}{\langle M^{\e}, M^{\e}\rangle_t},
  \end{equation}
  where we view $\langle M^{\e}, M^{\e}\rangle$ as an $\R^n\otimes\R^n$-valued process. It is easy to see that the coefficients in \eqref{SDE-str} are global Lipschitz and bounded by assumptions. Using the exponential tightness of $\{\langle M^{\e}, M^{\e}\rangle\}$ and Lemma \ref{exp-tight}.(i), we know that the family $\{|\langle M^{\e}, M^{\e}\rangle|_t\}$ is exponentially tight for each $t>0$. Thus, we can see that Proposition \ref{LDP-control} is applicable here, with $(Y^\e,\langle M^{\e}, M^{\e}\rangle)$ in place of $Y^\e$ and $|V(A^\e)| + (\frac{1}{\e}+1)|\langle M^\e, M^\e \rangle|$ in place of $G(Y^\e)$. The family $\{(X^\e,Y^\e,U^\e,\langle M^{\e}, M^{\e}\rangle)\}_{\e>0}$ satisfies the LDP with the following good rate function
  \begin{equation*}
    \tilde I(x,y,u,q) =
    \begin{cases}
      I''(y,u,q), & x = F(x,u)\cdot y + \frac{1}{2}(F^j \otimes\partial_{x_j} F)(x,u)\cdot q, y \text{ and } q \text{ are locally of finite variation}, \\
      \infty, & \text{otherwise}.
    \end{cases}
  \end{equation*}
  The results follow from the contraction principle.
\end{proof}

\begin{remark}
  The exponential tightness of $\{|\langle M^{\e}, M^{\e}\rangle|_t\}$ also follows from that of $\{\frac{1}{\e}|\langle M^{\e}, M^{\e}\rangle|_t\}$.
\end{remark}

\section{Large deviations for rolling systems}\label{sec-LDP-rolling}

In this section, we will present our main results Theorem \ref{cpct}, Theorem \ref{non-cpct-1} and Theorem \ref{non-cpct-2}, which provide the large deviation principles for various rolling systems.

\subsection{Preliminaries}

Let $M$ be a $d$-dimensional Riemannian manifold equipped with the Levi-Civita connection, $\pi:OM\to M$ be its orthonormal frame bundle with structure group $O(d)$. We denote by $\omega$ the connection form associated with the Levi-Civita connection. Denote by $H_\xi$ the standard horizontal vector field on $OM$ corresponding to $\xi\in\R^d$, uniquely characterized by the property that $\pi_*(H_\xi(u)) = u(\xi)$ for all $u \in OM$. Let $\{e_1,...,e_d\}$ be the canonical basis of $\mathbf R^d$, with dual basis $\{e_1^*,...,e_d^*\}$. We denote in shorthand $H_i:=H_{e_i}$. Then $H_\xi = \xi^i H_i$. Every $A\in \mathfrak{so}(d)$, Lie algebra of the rotation group $SO(d)$, induces a vector field $A^*$ on $OM$, called the fundamental vector field corresponding to $A$. It is well-known that the Lie algebra $\mathfrak{so}(d)$ consists of all skew-symmetric $d\times d$ matrices, and its dimension is $D=d(d-1)/2$. We denote by $a_i^j = e_i\otimes e_j^* - e_j \otimes e_i^*$ the $d\times d$ matrix such that the entry at the $i$-th column and the $j$-th row is $1$, the entry at the $j$-th column and the $i$-th row is $-1$ and other entries are all zero. Then $\{a_i^j \mid 1\le i<j\le d\}$ form a basis of $\mathfrak{so}(d)$. We rearrange this basis into $\{A_1,\cdots, A_D\}$. We refer the reader to \cite{KN63} and \cite{Hsu02} for more details on the differential geometry and its interactions with stochastic analysis.

Recall that the equation of rolling perturbed by small random slipping and twisting is set up on $OM$ as
\begin{equation}\label{rolling}
  d u^\e_t = H_{i}(u^\e_t) \circ d\gamma^{\e,i}_t + A_\alpha^*(u^\e_t)\circ d W_t^{\e,\alpha}, \quad u_0^\e = u_0\in OM,
\end{equation}
Let $x^\e = \{x^\e_t\}_{t\ge0}$ be the projection curve $x^\e_t = \pi(u^\e_t)$. We denote by $\tilde x^\e = \{\tilde x^\e_t\}_{t\ge0}$ the horizontal lift of the curve $\{x^\e_t\}$ starting at $u_0$. Then we have
\begin{lemma}\label{transform}
  Let 
  \begin{equation}\label{u-x-tilde}
    u^\e_t = \tilde x^\e_t g^\e_t,
  \end{equation}
  with $g^\e_t\in SO(d)$ for every $t\ge0$ and $\e>0$. Then
  \begin{align}
    d\tilde x^\e_t &= H_i(\tilde x^\e_t) (g^{\e,i}_j)_t \circ d\gamma^{\e,j}_t, \quad \tilde x^\e_0 = u_0, \label{sde-x-tilde} \\
    d g^\e_t &= g^\e_t A_\alpha \circ dW^{\e,\alpha}_t, \quad g^\e_0 = \I_d, \label{sde-g}
  \end{align}
  where $\I_d$ denotes the $d\times d$ identity matrix.
\end{lemma}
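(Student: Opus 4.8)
The plan is to exploit the decomposition $u^\e_t = \tilde x^\e_t\, g^\e_t$ and to recognize that the pair $(H_i, A_\alpha^*)$ splits the motion on $OM$ into a purely horizontal part (carried by $\tilde x^\e$) and a purely vertical part (carried by the $SO(d)$-valued process $g^\e$). First I would recall the structural properties of the standard horizontal and fundamental vertical vector fields on $OM$: for $g\in O(d)$ the right translation $R_g$ satisfies $(R_g)_* H_\xi = H_{g^{-1}\xi}$ and $(R_g)_* A^* = (\Ad_{g^{-1}} A)^*$, and the fundamental vector fields integrate the right action, i.e. the flow of $A^*$ through $u$ at time $s$ is $u\exp(sA)$. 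Since the Stratonovich calculus obeys the ordinary chain rule, differentiating the relation $u^\e_t = R_{g^\e_t}(\tilde x^\e_t)$ gives
\begin{equation*}
  \circ\, d u^\e_t = (R_{g^\e_t})_*\big(\circ\, d\tilde x^\e_t\big) + \big(\text{vertical part from } \circ\, dg^\e_t\big),
\end{equation*}
where the second term, because $g^\e$ moves inside the fibre, is exactly $(\,(g^\e_t)^{-1}\circ dg^\e_t\,)^*(u^\e_t)$ interpreted as a fundamental vector field at $u^\e_t$.

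Next I would match this with the right-hand side of \eqref{rolling}. If we \emph{define} $\tilde x^\e$ by \eqref{sde-x-tilde} and $g^\e$ by \eqref{sde-g}, then by the $SO(d)$-invariance of $\{A_1,\dots,A_D\}$ as an orthonormal basis and the transformation rule $(R_{g})_* H_\xi = H_{g^{-1}\xi}$, one computes
\begin{align*}
  (R_{g^\e_t})_*\big(H_i(\tilde x^\e_t)(g^{\e,i}_j)_t \circ d\gamma^{\e,j}_t\big)
    &= H_i(u^\e_t)\circ d\gamma^{\e,i}_t,\\
  \big(\text{vertical part}\big)
    &= A_\alpha^*(u^\e_t)\circ dW^{\e,\alpha}_t,
\end{align*}
the first line because $(g^{-1})^i{}_k g^k{}_j = \delta^i_j$ and the second because $(g^\e_t)^{-1}\circ dg^\e_t = A_\alpha\circ dW^{\e,\alpha}_t$ from \eqref{sde-g}. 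Hence the process $\tilde x^\e_t g^\e_t$ solves the same Stratonovich SDE on $OM$ as $u^\e$ with the same initial condition $u_0\,\I_d = u_0$; by uniqueness of solutions this process equals $u^\e$, so \eqref{u-x-tilde} holds with $\tilde x^\e, g^\e$ as in \eqref{sde-x-tilde}--\eqref{sde-g}. It then remains to check that this $\tilde x^\e$ is genuinely the horizontal lift of $x^\e$ (equivalently, that $\omega(\circ d\tilde x^\e_t)=0$): this is immediate from \eqref{sde-x-tilde} because each $H_i$ is horizontal, $\omega(H_\xi)=0$; and $\pi(\tilde x^\e_t g^\e_t) = \pi(\tilde x^\e_t)$ since $g^\e_t$ acts in the fibre, so $\pi(\tilde x^\e_t) = \pi(u^\e_t) = x^\e_t$, identifying $\tilde x^\e$ with the horizontal lift started at $u_0$.

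The main obstacle I anticipate is bookkeeping rather than conceptual: carefully justifying the Stratonovich chain rule for the map $(u,g)\mapsto R_g(u) = ug$ from $OM\times SO(d)$ to $OM$ when $g^\e$ itself is a continuous semimartingale (here driven by Brownian motion, but in the general parts of the paper driven by semimartingales), and tracking the $\Ad$-action so that the vertical term comes out as $A_\alpha^*$ with the \emph{same} constant coefficients $\circ dW^{\e,\alpha}$ rather than an $\Ad_{g^\e}$-twisted version. The key point that makes this work cleanly is that the $A_\alpha$ in \eqref{sde-g} sit on the \emph{right} of $g^\e_t$ (i.e. $dg^\e_t = g^\e_t A_\alpha\circ dW^{\e,\alpha}_t$, a right-invariant-type increment read in the fibre frame), which is precisely what is needed for $u^\e_t(g^\e_t)^{-1}\cdot\big((g^\e_t)^{-1}dg^\e_t\big)$ to produce $A_\alpha^*(u^\e_t)\circ dW^{\e,\alpha}_t$ without any residual conjugation. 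A secondary technical point is ensuring \eqref{sde-x-tilde} has a global solution (so that the decomposition is defined for all $t$); for compact $M$ this is automatic, and in general it follows once $g^\e$ is defined for all $t$, since then $\tilde x^\e$ solves a Stratonovich SDE on $OM$ with coefficients that are, along the solution, just smooth vector fields driven by the fixed semimartingale $\gamma^\e$ reparametrized by the bounded rotation $g^\e$.
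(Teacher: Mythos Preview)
Your argument is correct but runs in the opposite logical direction from the paper's. In the paper, $\tilde x^\e$ is \emph{defined} beforehand as the horizontal lift of $x^\e=\pi(u^\e)$ through $u_0$, and $g^\e$ is \emph{defined} by the relation $u^\e_t=\tilde x^\e_t g^\e_t$; the proof then \emph{derives} \eqref{sde-x-tilde} and \eqref{sde-g}. Concretely, the paper first computes $dx^\e_t=\pi_*(\circ du^\e_t)=u^\e_t(e_i)\circ d\gamma^{\e,i}_t=\tilde x^\e_t(g^\e_t e_j)\circ d\gamma^{\e,j}_t$, which together with the horizontality of $\tilde x^\e$ gives \eqref{sde-x-tilde}. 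For \eqref{sde-g} it applies the connection form $\omega$ to the chain rule $\circ du^\e_t=(R_{g^\e_t})_*(\circ d\tilde x^\e_t)+\tilde x^\e_t\circ dg^\e_t$: the first term vanishes because $\omega\circ(R_g)_*=\Ad_{g^{-1}}\omega$ and $\omega(\circ d\tilde x^\e)=0$, the second equals $d(L_{(g^\e_t)^{-1}})(\circ dg^\e_t)$, and equating with $\omega(\circ du^\e_t)=A_\alpha\circ dW^{\e,\alpha}_t$ yields \eqref{sde-g}. Your synthesis route---solve \eqref{sde-g}, then \eqref{sde-x-tilde}, form the product, match SDEs on $OM$, invoke uniqueness, and finally check horizontality---reaches the same conclusion, and your algebraic identities $(R_g)_*H_\xi=H_{g^{-1}\xi}$ and $(g^\e)^{-1}\circ dg^\e=A_\alpha\circ dW^{\e,\alpha}$ are exactly the right ingredients. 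The trade-off is that you must first secure global existence of the auxiliary SDE \eqref{sde-x-tilde} (which you note), whereas the paper's analytic route gets $\tilde x^\e$ for free as long as $u^\e$ exists and reads off the SDEs as consequences; on the other hand your approach makes the group-action structure more transparent and avoids explicit use of the connection form.
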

\begin{proof}
  We follow the lines of \cite[Lemma 3.1]{Li16a}. By \eqref{rolling} and $\tilde x^\e = \{\tilde x^\e_t\}_{t\ge0}$,
  \begin{equation*}
    d x^\e_t = \pi^*(du^\e_t) = \pi^*((du^\e_t)^h) = \pi^*(H_{i}(u^\e_t) \circ d\gamma^{\e,i}_t) = u^\e_t\circ d\gamma^\e_t = \tilde x^\e_t g^\e_t\circ d\gamma^\e_t,
  \end{equation*}
  which implies \eqref{sde-x-tilde}. On the other hand, \eqref{u-x-tilde} yields that
  \begin{equation*}
    du^\e_t = d(R_{g^\e_t})(\circ d\tilde x^\e_t) + \tilde x^\e_t \circ dg^\e_t.
  \end{equation*}
  For a $g\in O(d)$, we denote by $R_g$ and $L_g$ the right and left action of $g$ on $OM$, and denote by $d(R_g)$ and $d(R_g)$ their differentials. Then
  \begin{equation*}
    \omega(\circ du^\e_t) = \Ad((g^\e_t)^{-1})\omega(\circ d\tilde x^\e_t) + d(L_{(g^\e_t)^{-1}})(\circ dg^\e_t) = d(L_{(g^\e_t)^{-1}})(\circ dg^\e_t).
  \end{equation*}
  We also have by \eqref{rolling} that
  \begin{equation*}
    \omega(\circ du^\e_t) = A_\alpha \circ dW^{\e,\alpha}_t.
  \end{equation*}
  Hence
  \begin{equation*}
    dg^\e_t = d(L_{g^\e_t})(A_\alpha \circ dW^{\e,\alpha}_t) = g^\e_t A_\alpha \circ dW^{\e,\alpha}_t.
  \end{equation*}
  This gives \eqref{sde-g}.
\end{proof}

We will also need some probabilistic preparation. Given two continuous one-dimensional local martingales $X$ and $Y$ on a filtered probability space $(\Omega,\F,\{\F_t\}_{t\ge0},\P)$, the \emph{quadratic covariation} of $X$ and $Y$ is defined by
\begin{equation*}
  \langle X,Y\rangle_t := \lim_{\|\Delta\| \to 0} \sum_i (X_{t_{i+1}}-X_{t_i})(Y_{t_{i+1}}-Y_{t_i})
\end{equation*}
where $\Delta$ ranges over all partitions $\{t_i\}_{i=0}^k$ of the interval $[0,t]$ with $0=t_0\le t_1\le \cdots \le t_k=t$, and $\|\Delta\| = \max_{1\le i\le k}|t_{i+1}-t_i|$ is the mesh of $\Delta$. This limit, if it exists, is defined using convergence in probability. The following lemma will be used.
\begin{lemma}\label{co-var}
  Let $X$ be a one-dimensional continuous local martingale, and $W$ be a one-dimensional standard Brownian motion independent of $X$. Then we have $\langle X,W\rangle = 0$.
\end{lemma}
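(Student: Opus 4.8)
The plan is to prove that the quadratic covariation of an independent continuous local martingale and a Brownian motion vanishes, using a localization-plus-approximation argument that reduces everything to a statement about products of independent centred random variables.

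First I would reduce to the bounded case. By a standard localization argument, pick a sequence of stopping times $\tau_n\uparrow\infty$ such that $X^{\tau_n}$ is a bounded martingale; since quadratic covariation commutes with stopping, $\langle X,W\rangle^{\tau_n} = \langle X^{\tau_n}, W^{\tau_n}\rangle$, so it suffices to show $\langle X^{\tau_n}, W\rangle = 0$ for each $n$ (and $W^{\tau_n}$ is still a continuous local martingale, in fact we only use boundedness of one factor). Hence we may and do assume $X$ is a bounded martingale, say $|X|\le C$. One should be slightly careful that the stopping times $\tau_n$ can be chosen measurable with respect to the filtration of $X$ alone so that independence of $X^{\tau_n}$ and $W$ is preserved; this is fine since $X$ generates its own natural filtration and one can localize there.

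Next, fix $t>0$ and a partition $\Delta = \{t_i\}$ of $[0,t]$, and consider the Riemann--Stieltjes-type sum $S_\Delta := \sum_i (X_{t_{i+1}}-X_{t_i})(W_{t_{i+1}}-W_{t_i})$. The key step is an $L^2$ estimate: $\E[S_\Delta^2]\to 0$ as $\|\Delta\|\to 0$, which gives convergence to $0$ in probability and hence identifies the limit. Expanding the square, $\E[S_\Delta^2] = \sum_{i,j}\E[(X_{t_{i+1}}-X_{t_i})(X_{t_{j+1}}-X_{t_j})(W_{t_{i+1}}-W_{t_i})(W_{t_{j+1}}-W_{t_j})]$. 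For $i\ne j$, say $i<j$, conditioning on $\F_{t_j}$ and using that $W$ has independent increments (and is independent of $X$) makes the factor $(W_{t_{j+1}}-W_{t_j})$ a mean-zero term independent of everything else in the product, so that cross term vanishes. For the diagonal terms $i=j$, independence of $X$ and $W$ factorizes the expectation: $\E[(X_{t_{i+1}}-X_{t_i})^2(W_{t_{i+1}}-W_{t_i})^2] = \E[(X_{t_{i+1}}-X_{t_i})^2]\,(t_{i+1}-t_i)$. Summing, $\E[S_\Delta^2] = \sum_i \E[(X_{t_{i+1}}-X_{t_i})^2](t_{i+1}-t_i) \le \|\Delta\|\sum_i\E[(X_{t_{i+1}}-X_{t_i})^2] = \|\Delta\|\,\E[(X_t-X_0)^2] \le \|\Delta\|\,(2C)^2 \to 0$, where we used the martingale property of $X$ (orthogonality of increments) to telescope the sum. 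This proves $S_\Delta\to 0$ in $L^2$, hence in probability, so $\langle X,W\rangle_t = 0$ for every $t$; continuity in $t$ is automatic and the proof is complete after undoing the localization.

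The main obstacle — really the only subtle point — is justifying the vanishing of the off-diagonal terms cleanly and handling the interaction between the two filtrations. One must be careful that when conditioning on $\F_{t_j}$, the increment $(W_{t_{j+1}}-W_{t_j})$ is independent of $\F_{t_j}$; this requires that $W$ be a Brownian motion in a filtration making the pair $(X,W)$ adapted, which is the natural setup since independence of the two processes lets one take the product filtration. An alternative, perhaps cleaner route that sidesteps filtration bookkeeping is to invoke polarization together with the Kunita--Watanabe/integration-by-parts identity $\langle X,W\rangle = $ the bracket computed via $XW - \int X\,dW - \int W\,dX$, or simply to note that $X+W$ and $X-W$ are local martingales with $\langle X\pm W\rangle = \langle X\rangle + \langle W\rangle \pm 2\langle X,W\rangle$, and that independence forces $\langle X+W\rangle = \langle X\rangle + \langle W\rangle$ by the same $L^2$ computation applied to the single process $X+W$; then $4\langle X,W\rangle = \langle X+W\rangle - \langle X-W\rangle = 0$. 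Either way the crux is the independent-increments cancellation, and I expect to spell out the direct $L^2$ argument as it is the most self-contained.
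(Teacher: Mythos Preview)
Your proposal is correct and takes essentially the same approach as the paper: both argue that the discrete covariation sums $S_\Delta$ go to $0$ in $L^2$ by splitting $\E[S_\Delta^2]$ into diagonal and off-diagonal terms, killing the off-diagonal terms via independence of $X$ and $W$ together with independent increments of $W$, and bounding the diagonal by $\|\Delta\|\sum_i\E[(X_{t_{i+1}}-X_{t_i})^2]$.

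The one noteworthy difference is your preliminary localization step. The paper works directly with the local martingale $X$ and appeals to $\sum_i\E[(X_{t_{i+1}}-X_{t_i})^2]\to \E\langle X,X\rangle_t$, which tacitly presumes $\E\langle X\rangle_t<\infty$; your reduction to a bounded martingale (with $\tau_n$ chosen in the natural filtration of $X$ so that independence from $W$ is preserved) removes this unspoken hypothesis and lets you telescope $\sum_i\E[(X_{t_{i+1}}-X_{t_i})^2]=\E[(X_t-X_0)^2]$ cleanly. So your write-up is slightly more careful, but the core idea is identical.
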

\begin{proof}
  Fix $t>0$. By the definition of quadratic covariation, it is sufficient to prove that
  \begin{equation*}
    \sum_i (X_{t_{i+1}}-X_{t_i})(W_{t_{i+1}}-W_{t_i}) \to 0, \text{ in } L^2, \quad\text{as } \|\Delta\| \to 0.
  \end{equation*}
  We denote by $S^\Delta$ the summation in the previous sentence. Then
  \begin{equation*}
    \begin{split}
      |S^\Delta|^2 =&\ \sum_i (X_{t_{i+1}}-X_{t_i})^2(W_{t_{i+1}}-W_{t_i})^2 \\
         &\ + \sum_{i\ne j} (X_{t_{i+1}}-X_{t_i})(X_{t_{j+1}}-X_{t_j}) (W_{t_{i+1}}-W_{t_i})(W_{t_{j+1}}-W_{t_j}) \\
         =:&\ I_1 + I_2.
    \end{split}
  \end{equation*}
  For $I_1$, using the independence of $X$ and $W$, we have
  \begin{equation*}
    \E(I_1) = \sum_i (t_{i+1}-t_i) \E(X_{t_{i+1}}-X_{t_i})^2 \le \|\Delta\| \sum_i \E(X_{t_{i+1}}-X_{t_i})^2,
  \end{equation*}
  which goes to zero as $\|\Delta\|\to 0$, since $\sum_i \E(X_{t_{i+1}}-X_{t_i})^2\to \E(\langle X,X\rangle)$. For $I_2$, since $W$ has independent increments,
  \begin{equation*}
    \E(I_2) = \sum_{i\ne j} \E [(X_{t_{i+1}}-X_{t_i})(X_{t_{j+1}}-X_{t_j})] \E (W_{t_{i+1}}-W_{t_i}) \E(W_{t_{j+1}}-W_{t_j}) = 0.
  \end{equation*}
  Combining these, $\E|S^\Delta|^2\to0$ as $\|\Delta\|\to 0$, the result follows.
\end{proof}

We denote the canonical decomposition of the continuous semimartingale $\gamma^\e$ by
\begin{equation*}
  \gamma^\e = m^\e + a^\e,
\end{equation*}
where $m^\e$ is the continuous local martingale part, and $a^\e$ is the continuous finite variation part.

We define the space $\H^1$ by
\begin{equation*}
  \H^1 := \H^1(\R_+,\R^d) = \{ f: \R_+ \to \R^d \mid f \text{ is absolutely continuous and } \textstyle{\int_0^\infty} |\dot f(t)|^2 dt < \infty \},
\end{equation*}
equipped with norm $\|f\|_{\H^1} = \int_0^\infty |\dot f(t)|^2 dt$.

\subsection{For compact manifolds}\label{sec-cpct}

The manifold rolled against the Euclidean space is assumed to be compact in this subsection. The random Euclidean curves that the manifold rolls along are general continuous semimartingales.

The following theorem is the first main result of this paper.

\begin{theorem}\label{cpct}
  Let $M$ be a compact Riemannian manifold. Then for each $\e>0$, the SDE \eqref{rolling} is conservative. If the family $\{(\gamma^\e,\langle m^\e,m^\e\rangle)\}_{\e>0}$ satisfies the LDP with a good rate function $I$, and the family $\{G(\gamma^\e)_t\}_{\e>0}$
  is exponentially tight for each $t> 0$, then $\{u^\e\}_{\e>0}$, $\{\tilde x^\e\}_{\e>0}$ and $\{x^\e\}_{\e>0}$ all satisfy the LDP with the following good rate functions respectively:
  \begin{align}
    I_{OM}(u) &= \inf\left\{I(y,q) + \textstyle{\frac{1}{2}\int_0^\infty} |\dot f(t)|^2dt :
    \begin{array}{l}
    u = u_0 + H_i(u)\cdot y^i + A^*_\alpha(u)\cdot f^\alpha \\
    \qquad + \frac{1}{2}(H_k H_h)(u) \cdot q^{kh}, \\
    f\in \H^1, y \text{ and } q \text{ are locally of finite variation}
    \end{array}
    \right\}, \label{rate-u} \\
    \tilde I_{OM}(v) &= \inf\left\{ I(y,q) + \textstyle{\frac{1}{2}\int_0^\infty} |\dot f(t)|^2dt :
    \begin{array}{l}
      v= u_0+H_i(v)g_j^i\cdot y^j + \frac{1}{2}(H_i H_j)(v)g^i_k g^j_h \cdot q^{kh}, \\
      g = \I_d + g A_\alpha \cdot f^\alpha, f\in \H^1, \\
      y \text{ and } q \text{ are locally of finite variation}
    \end{array}
    \right\}, \label{rate-tilde-x} \\
    I_M(x) &= \inf \{ I_{OM}(u): \pi(u) = x \} = \inf \{ \tilde I_{OM}(v): \pi(v) = x \}. \label{rate-x}
  \end{align}
\end{theorem}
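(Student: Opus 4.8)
The plan is to push everything down to the Euclidean controlled-SDE results of Section~\ref{sec-LDP}, above all Corollary~\ref{exp-tight-Strat}, via a Whitney embedding of the compact bundle $OM$. Conservativeness of \eqref{rolling} is immediate: $M$ and $O(d)$ compact force $OM$ compact, so the Stratonovich SDE \eqref{rolling}, having smooth coefficients and continuous-semimartingale drivers, has no explosion. The remaining content is the LDP.

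First I would fix a smooth embedding $OM\hookrightarrow\R^N$ realizing $OM$ as a compact (hence closed) submanifold and extend $H_1,\dots,H_d,A_1^*,\dots,A_D^*$ to compactly supported smooth vector fields on $\R^N$; these are bounded, globally Lipschitz with globally Lipschitz gradient, and coincide with the originals on $OM$, so the solution of the extended SDE started at $u_0$ stays on $OM$ and equals $u^\e$. Thus \eqref{rolling} becomes an $\R^N$-valued SDE of the type \eqref{SDE-Stra} driven by $Y^\e:=(\gamma^\e,W^\e)$, with canonical martingale part $M^\e=(m^\e,W^\e)$. Since $W=W^\e/\sqrt\e$ is independent of $\gamma^\e$, Lemma~\ref{co-var} gives $\langle m^\e,W^\e\rangle\equiv0$, hence $\langle M^\e,M^\e\rangle=\mathrm{diag}(\langle m^\e,m^\e\rangle,\,\e t\,\I_D)$ and $G(Y^\e)_t=G(\gamma^\e)_t+c_t$ for a deterministic constant $c_t$, so $\{G(Y^\e)_t\}$ is exponentially tight. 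For the LDP of the driving data: $\{(\gamma^\e,\langle m^\e,m^\e\rangle)\}$ has the LDP with rate $I$ by hypothesis, $\{W^\e\}=\{\sqrt\e W\}$ satisfies Schilder's theorem \cite{DZ98} with rate $\tfrac12\int_0^\infty|\dot f|^2\,dt$ on $\H^1$, these two families are independent, and $\{\e t\,\I_D\}$ is deterministic and converges to $0$; combining these, $\{(\gamma^\e,W^\e,\langle m^\e,m^\e\rangle,\e t\,\I_D)\}$ satisfies the LDP with good rate function $I''((y,w),(q,r))$ equal to $I(y,q)+\tfrac12\int_0^\infty|\dot w|^2\,dt$ when $w\in\H^1$ and $r\equiv0$, and $\infty$ otherwise.

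Corollary~\ref{exp-tight-Strat} then applies to the embedded form of \eqref{rolling} and gives the LDP for $\{(u^\e,\gamma^\e,W^\e)\}$ with a good rate function obtained by minimizing $I''$ over the quadratic-variation limit; the structure of $I''$ forces that limit's $W$-block and $\gamma$--$W$ cross-block to vanish, leaving $\tfrac12(H_kH_h)(u)\cdot q^{kh}$ as the only Stratonovich correction. As $u^\e$ lives in the closed set $OM\subset\R^N$, this LDP descends to $\C(\R_+;OM)$, and contracting out $(\gamma^\e,W^\e)$ yields the LDP for $\{u^\e\}$ with rate function \eqref{rate-u} (writing $f$ for the limit of $W^\e$). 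For $\{\tilde x^\e\}$, Lemma~\ref{transform} shows that $(\tilde x^\e,g^\e)$ solves one $OM\times SO(d)$-valued Stratonovich SDE driven by $(\gamma^\e,W^\e)$ — the $\tilde x$-block driven only by $\gamma^\e$ with coefficient $H_i(\tilde x)g^i_j$, the $g$-block driven only by $W^\e$ with coefficient $gA_\alpha$ — so the same embedding and Corollary~\ref{exp-tight-Strat} give the LDP for $\{(\tilde x^\e,g^\e,\gamma^\e,W^\e)\}$, whose rate function carries the correction $\tfrac12(H_iH_j)(v)g^i_kg^j_h\cdot q^{kh}$ on the $\tilde x$-block and none on the $g$-block; contracting out $(g^\e,\gamma^\e,W^\e)$, with $g$ kept as the auxiliary variable tied to $f$ by the linear $g$-ODE, gives \eqref{rate-tilde-x}. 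Finally $\pi:OM\to M$ is continuous with $\pi(u^\e)=\pi(\tilde x^\e)=x^\e$, so by the contraction principle $\{x^\e\}$ satisfies the LDP with $I_M(x)=\inf\{I_{OM}(u):\pi(u)=x\}=\inf\{\tilde I_{OM}(v):\pi(v)=x\}$, the two infima coinciding because the contraction principle produces a single rate function; goodness of all three rate functions is inherited through the contractions.

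The step I expect to be most delicate is purely bookkeeping: matching the abstract variational rate function delivered by Corollary~\ref{exp-tight-Strat} — stated in ambient coordinates after embedding — with the intrinsic formulas \eqref{rate-u}--\eqref{rate-tilde-x}. Concretely, one must assemble the joint LDP of the driving data including its degenerate deterministic quadratic-variation block $\e t\,\I_D$ (using Lemma~\ref{co-var} to discard the cross terms), check that every $W$-generated Stratonovich correction vanishes in the limit while the $\langle m^\e,m^\e\rangle$-corrections collect exactly into the second-order terms $\tfrac12(H_kH_h)(\cdot)\cdot q^{kh}$, and confirm that the cut-off extensions do not change the solution (true since $OM$ is compact and invariant under the original vector fields), so that the Euclidean LDP transfers back to the bundle intact.
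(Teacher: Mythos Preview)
Your proposal is correct and follows the paper's overall strategy: Whitney-embed the compact bundle, extend the vector fields, assemble the joint LDP of the driving data via independence and Lemma~\ref{co-var}, then apply Corollary~\ref{exp-tight-Strat} and contract. The one organizational difference worth noting is your treatment of $\tilde x^\e$. You handle $(\tilde x^\e,g^\e)$ as a single coupled Stratonovich system on $OM\times SO(d)$ driven by $(\gamma^\e,W^\e)$ and apply Corollary~\ref{exp-tight-Strat} once, with no control. The paper instead first applies Freidlin--Wentzell (equivalently Corollary~\ref{exp-tight-Strat}) to the $g$-equation alone to obtain the LDP for $\{g^\e\}$, then uses independence of $\gamma^\e$ and $g^\e$ to get the LDP for $(\gamma^\e,\langle m^\e,m^\e\rangle,g^\e)$, and finally invokes Corollary~\ref{exp-tight-Strat} on the $\tilde x$-equation \eqref{sde-x-tilde} with $g^\e$ playing the role of the \emph{control} $U^\e$. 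This is precisely why the controlled framework was set up in Section~\ref{sec-LDP} (as the paper remarks at the start of that section). Your coupled-system route sidesteps the control machinery for this particular step and is arguably more direct here; the paper's route, by contrast, keeps the driving semimartingale for the $\tilde x$-equation one-dimensional in the noise (only $\gamma^\e$) and isolates the $g$-dynamics cleanly, which makes the identification of the rate function \eqref{rate-tilde-x} slightly more transparent. Both yield the same formula.
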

\begin{proof}
  The well-known Whitney's embedding theorem \cite[Theorem 6.15]{Lee12} implies that $OM$ admits a proper smooth embedding into $\R^p$ with some $p\in\N$, which we denote as $\iota: OM\to \R^p$. Using this embedding, the rolling equation becomes
  \begin{equation*}
    d (\iota \circ u^\e)_t = \iota_*(H_{i})((\iota \circ u^\e)_t) \circ d\gamma^{\e,i}_t + \iota_*(A_\alpha^*)((\iota \circ u^\e)_t)\circ d W_t^{\e,\alpha},
  \end{equation*}
  Since $M$ is compact, its orthonormal frame bundle $OM$ with structure group $O(d)$ is also compact. Thus, the smooth vector fields $\iota_*(H_i)$'s and $\iota_*(A^*_\alpha)$'s on $\iota(OM)$ are all bounded and Lipschitz, with Lipschitz derivatives. The properness of the embedding $\iota$ yields that $\iota(OM)$ is a closed submanifold of $\R^p$. Thus, one can extend each $\iota_*(H_i)$ and $\iota_*(A^*_\alpha)$ to bounded and Lipschitz vector fields on $\R^p$ with Lipschitz derivatives, which we denote as $\widetilde{\iota_*(H_i)}$ and $\widetilde{\iota_*(A^*_\alpha)}$. Since these extended vector fields are tangent to $\iota(OM)$ when restricted on it, $\iota \circ u^\e$ is also the solution to the following equation
  \begin{equation}\label{embed-sde}
    d (\iota \circ u^\e)_t = \widetilde{\iota_*(H_{i})}((\iota \circ u^\e)_t) \circ d\gamma^{\e,i}_t + \widetilde{\iota_*(A_\alpha^*)}((\iota \circ u^\e)_t)\circ d W_t^{\e,\alpha}.
  \end{equation}
  Then the classical well-posedness theory of SDEs (see, e.g., \cite[Theorem 5.2.5, 5.2.9]{KS91}) yields that the solution $\iota \circ u^\e$ of \eqref{embed-sde} globally exists (without finite-time explosion) and is unique. Since $\iota \circ u^\e$ starts from $\iota(OM)$ and the vector fields $\widetilde{\iota_*(H_i)}$ and $\widetilde{\iota_*(A^*_\alpha)}$ are tangent to $\iota(OM)$ on $\iota(OM)$, it is expected that $\iota \circ u^\e$ never leaves $\iota(OM)$ (cf. \cite[Proposition 1.2.8]{Hsu02}). Since $\iota$ is smooth embedding, the conservation of \eqref{rolling} follows.

  Since $\langle W^\e,W^\e\rangle_t = \e t\I_D$, the family $\langle W^\e,W^\e\rangle$ converges to the constant path $0$ in $\C^{D\times D}$ as $\e\to0$. It follows from the classical results that the family $\{(W^\e,\langle W^\e,W^\e\rangle)\}$ satisfies the LDP with the good rate function
  \begin{equation*}
    I'(f,g) =
    \begin{cases}
      \frac{1}{2}\int_0^\infty|\dot f(t)|^2dt, & \mbox{if } f\in \H^1, g \equiv 0, \\
      \infty, & \mbox{otherwise}.
    \end{cases}
  \end{equation*}
  The independence of $\gamma^\e$ and $W^\e$ yields that the LDP holds for $\{(\gamma^\e,\langle m^\e,m^\e\rangle,W^\e,\langle W^\e,W^\e\rangle)\}$ with the good rate function $I''(y,q,f,g) = I(y,q)+ I'(f,g)$. It follows from Lemma \ref{co-var} that
  \begin{equation*}
    \langle (m^\e,W^\e),(m^\e,W^\e)\rangle = \left(
                                               \begin{array}{cc}
                                                 \langle m^\e,m^\e\rangle & 0 \\
                                                 0 & \langle W^\e,W^\e\rangle \\
                                               \end{array}
                                             \right).
  \end{equation*}
  Due to the contraction principle, $\{(\gamma^\e,W^\e,\langle (m^\e,W^\e),(m^\e,W^\e)\rangle )\}$ satisfies LDP with the good rate function
  \begin{equation*}
    I'''(y,f,Q) =
    \begin{cases}
      I''(y,Q_{11},f,Q_{22}), & \mbox{if } Q_{21} = 0 \text{ and } Q_{12} = 0, \\
      \infty, & \mbox{otherwise},
    \end{cases}
  \end{equation*}
  where $Q = \left( \begin{array}{cc}
                 Q_{11} & Q_{12} \\
                 Q_{21} & Q_{22}
               \end{array} \right)$
  is a $(d+D)\times(d+D)$ matrix with $Q_{11}$, $Q_{22}$ the $d\times d$ and $D\times D$ submatrix respectively, etc. On the other hand, $G((\gamma^\e,W^\e))_t = G(\gamma^\e)_t + \frac{1}{\e}|\langle W^\e,W^\e\rangle|_t = G(\gamma^\e)_t + Dt$. For each $t$, the exponential tightness of $\{G(\gamma^\e)_t\}$ yields that $\{G((\gamma^\e,W^\e))_t\}$ is also exponentially tight.

  Now we can apply Corollary \ref{exp-tight-Strat}, the family $\{\iota\circ u^\e\}$ satisfies the LDP with the good rate function
  \begin{equation*}
    \begin{split}
      I_{\iota(OM)}(x) & = \inf\{I''(y,q,f,g): x = \iota(u_0) + \widetilde{\iota_*(H_i)}(x)\cdot y^i + \widetilde{\iota_*(A^*_\alpha)}(x)\cdot f^\alpha+ \textstyle{\frac{1}{2}}(\widetilde{\iota_*(H_k)}^j \partial_{x_j}\widetilde{\iota_*(H_h)})(x) \cdot q^{kh} \\
      &\qquad\quad + \textstyle{\frac{1}{2}}(\widetilde{\iota_*(A^*_\alpha)}^j \partial_{x_j}\widetilde{\iota_*(A^*_\beta)})(x) \cdot g^{\alpha\beta}, y,q,f \text{ and } g \text{ are locally of finite variation} \} \\
      & = \inf\{I(y,q) + \textstyle{\frac{1}{2}\int_0^\infty} |\dot f(t)|^2dt : x = \iota(u_0) + \widetilde{\iota_*(H_i)}(x)\cdot y^i + \widetilde{\iota_*(A^*_\alpha)}(x)\cdot f^\alpha \\
      &\qquad\quad + \textstyle{\frac{1}{2}}(\widetilde{\iota_*(H_k)}^j \partial_{x_j}\widetilde{\iota_*(H_h)})(x) \cdot q^{kh}, f\in \H^1, y \text{ and } q \text{ are locally of finite variation} \} \\
      & = \inf\{I(y,q) + \textstyle{\frac{1}{2}\int_0^\infty} |\dot f(t)|^2dt : x = \iota(u_0) + \iota_*(H_i)(x)\cdot y^i + \iota_*(A^*_\alpha)(x)\cdot f^\alpha \\
      &\qquad\quad + \textstyle{\frac{1}{2}}(\iota_*(H_k)^j \partial_{x_j}\iota_*(H_h))(x) \cdot q^{kh}, f\in \H^1, y \text{ and } q \text{ are locally of finite variation} \},
    \end{split}
  \end{equation*}
  where the last equality follows from the fact that the path $x$ always stays on $\iota(OM)$, since all the vector fields $\widetilde{\iota_*(H_i)}$ and $\widetilde{\iota_*(A^*_k)}$ are tangent to $\iota(OM)$ when restricted on it and the initial value $\iota(u_0)\in\iota(OM)$. Since $\iota$ is a smooth embedding, the inverse contraction principle (see, e.g., \cite[Theorem 4.2.4]{DZ98}) implies that the family $\{u^\e\}$ satisfies the LDP with the good rate function
  \begin{equation*}
    I_{OM}(u) = I_{\iota(OM)}(\iota\circ u),
  \end{equation*}
  which yields \eqref{rate-u}.

  Now we turn to $\{\tilde x^\e\}$. 
  We will use Lemma \ref{transform}. It is easy to see that the coefficient function of SDE \eqref{sde-g} is global Lipschitz and bounded. The classical Freidlin-Wentzell theory for Stratonovich-type SDE (see \cite[Theorem 2.5]{KRV18}, or using our general result Corollary \ref{exp-tight-Strat}) yields that the family $\{g^\e\}$ satisfies the LDP with the good rate function
  \begin{equation*}
    \tilde I'(g) = \inf\{\textstyle{\frac{1}{2}} \int_0^\infty|\dot f(t)|^2dt: g = \I_d + g A_\alpha \cdot f^\alpha, f\in \H^1 \}.
  \end{equation*}
  Since $\gamma^\e$ is independent of $W^\e$, it is also independent of $g^\e$. Thus the family $\{(\gamma^\e,\langle m^\e,m^\e\rangle,g^\e)\}$ satisfies the LDP with the good rate function $\tilde I''(y,q,g) = I(y,q) + \tilde I'(g)$. Similar as $u^\e$, Corollary \ref{exp-tight-Strat} yields that the family $\{\tilde x^\e\}$ satisfies the LDP with the good rate function
  \begin{equation*}
    \tilde I_{OM}(v) = \inf\{ \tilde I''(y,q,g): v= u_0+H_i(v)g_j^i\cdot y^j + \textstyle{\frac{1}{2}}(H_i H_j)(v)g^i_k g^j_h \cdot q^{kh}, y \text{ and } q \text{ are locally of finite variation} \},
  \end{equation*}
  the representation \eqref{rate-tilde-x} follows. The large deviation principle of $\{x^\e\}$ and its rate function \eqref{rate-x} follows from the continuity of the projection $\pi$ and the contraction principle.
\end{proof}

\subsection{For noncompact manifolds}\label{sec-noncpct}

In this subsection, we deal with the rolling systems where the rolled manifold $M$ is a noncompact Riemannian manifold. For the Euclidean curves, we only consider two special case: the random curves with locally finite variation and the random curve driven by stochastic differential equations.

\subsubsection{Along pathwise rectifiable random curves}

We consider the case that $m^\e \equiv 0$, that is, each $\gamma^\e$ is a continuous process with locally finite variation. In this case, both Stratonovich circles in \eqref{rolling} and \eqref{sde-x-tilde} can be removed, because the integrals therein are in the Riemann-Stieltjes sense. Note also that in this case, $G(\gamma^\e) = V(\gamma^\e)$.

\begin{theorem}\label{non-cpct-1}
  Let $M$ be a geodesically complete Riemannian manifold. Let $\{\gamma^\e\}_{\e>0}$ be a family of adapted continuous processes with locally finite variation. Then for each $\e>0$, the SDE \eqref{rolling} is conservative. If the family $\{\gamma^\e\}_{\e>0}$ satisfies the LDP with a good rate function $I^\flat$, and the family $\{|V(\gamma^\e)|_t \}_{\e>0}$
  is exponentially tight for each $t> 0$, then $\{u^\e\}_{\e>0}$ and $\{\tilde x^\e\}_{\e>0}$ both satisfy the LDP with the following good rate functions respectively:
  \begin{align}
    I_{OM}(u) &= \inf\left\{ I^\flat(y) + \textstyle{\frac{1}{2}\int_0^\infty} |\dot f(t)|^2dt :
    \begin{array}{l}
      u = u_0 + H_i(u)\cdot y^i + A^*_\alpha(u)\cdot f^\alpha, \\
      f\in \H^1, y \text{ is locally of finite variation}
    \end{array}
    \right\}, \label{rate-u-1} \\
    \tilde I_{OM}(v) &= \inf\left\{ I^\flat(y) + \textstyle{\frac{1}{2}\int_0^\infty} |\dot f(t)|^2dt :
    \begin{array}{l}
    v = u_0 + H_i(v)g_j^i\cdot y^j, g = \I_d + g A_\alpha \cdot f^\alpha, \\
    f\in \H^1, y \text{ is locally of finite variation}
    \end{array}
    \right\}, \label{rate-tilde-x-1}
  \end{align}
  and the family $\{x^\e\}_{\e>0}$ satisfies the LDP with the good rate function \eqref{rate-x}.
\end{theorem}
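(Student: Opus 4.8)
The plan is to reduce the noncompact case to the compact case of Theorem~\ref{cpct} by a space-and-time exhaustion together with a localization argument; the one genuinely new ingredient is a geometric \emph{displacement bound} that controls the motion of $u^\e$ by the total variation of $\gamma^\e$, and this is where geodesic completeness enters.

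\emph{Conservativeness and the displacement bound.} Applying the Stratonovich chain rule to $x^\e_t=\pi(u^\e_t)$ and using that each $A^*_\alpha$ is vertical (as in the proof of Lemma~\ref{transform}) gives $\circ\,dx^\e_t=u^\e_t(e_i)\circ d\gamma^{\e,i}_t$; since $\gamma^\e$ has finite variation this Stratonovich differential is a Riemann--Stieltjes one with no martingale part, so $x^\e$ is an $M$-valued curve of finite variation whose Riemannian length on $[0,t]$ equals $|V(\gamma^\e)|_t$, because $u^\e_s$ is an orthonormal frame. Hence $\pi(u^\e_t)\in\bar B_M(\pi(u_0),|V(\gamma^\e)|_t)$ and, since $g^\e$ acts within the fibre, $u^\e_t$ (and likewise $\tilde x^\e_t$) lies in $K_r:=\pi^{-1}(\bar B_M(\pi(u_0),r))$ with $r=|V(\gamma^\e)|_t$. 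Geodesic completeness and the Hopf--Rinow theorem make every $\bar B_M(\pi(u_0),r)$ compact, and since $\pi$ has compact fibre $O(d)$, each $K_r$ is compact; so $\{K_r\}_{r>0}$ is a compact exhaustion of $OM$, $u^\e$ cannot explode, and \eqref{rolling} is conservative (the solution $g^\e$ of \eqref{sde-g} lives in the compact group $SO(d)$ and $\tilde x^\e$ is handled the same way). I also record a consequence of the hypotheses that will be used below: since the total-variation functional $\Psi_T(y):=|V(y)|_T$ is lower semicontinuous on $\C^d$, the set $\{\Psi_T>R\}$ is open, so the LDP lower bound for $\{\gamma^\e\}$ together with the exponential tightness of $\{|V(\gamma^\e)|_T\}_{\e>0}$ forces $\sup\{|V(y)|_T:\ I^\flat(y)\le\alpha\}<\infty$ for all $\alpha,T$; i.e.\ level sets of $I^\flat$ are bounded in variation on each $[0,T]$.

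\emph{LDPs for the truncated systems.} Exactly as in Theorem~\ref{cpct}, \eqref{sde-g} has bounded, globally Lipschitz coefficients on the compact $SO(d)$, so $\{g^\e\}$ satisfies the LDP with good rate $\tilde I'(g)=\inf\{\frac12\int_0^\infty|\dot f|^2dt:\ g=\I_d+gA_\alpha\cdot f^\alpha,\ f\in\H^1\}$, and by the independence of $\gamma^\e$ and $W^\e$ the families $\{(\gamma^\e,g^\e)\}$ and $\{(\gamma^\e,W^\e)\}$ satisfy the LDP with rates $I^\flat(y)+\tilde I'(g)$ and $I^\flat(y)+\frac12\int_0^\infty|\dot f|^2dt$ respectively (using $m^\e\equiv0$ and $\langle W^\e,W^\e\rangle\to0$, as in Theorem~\ref{cpct}). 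Now fix $r>0$: via a proper Whitney embedding $\iota:OM\to\R^p$ and a smooth compactly supported cutoff equal to $1$ on a neighbourhood of $\iota(K_{r+1})$, extend the pushed-forward fields $\iota_*(H_i)$, $\iota_*(A^*_\alpha)$ to bounded, globally Lipschitz vector fields on $\R^p$ with globally Lipschitz derivatives, and let $u^{\e,r}$, $\tilde x^{\e,r}$ solve the correspondingly truncated versions of \eqref{rolling} and \eqref{sde-x-tilde}. Since $G(\gamma^\e)=|V(\gamma^\e)|$, and hence $G((\gamma^\e,W^\e))_t=|V(\gamma^\e)|_t+Dt$, is exponentially tight, Corollary~\ref{exp-tight-Strat} applies to the truncated \eqref{rolling}, and Proposition~\ref{LDP-control} applies to the truncated \eqref{sde-x-tilde} with control $U^\e=g^\e$ and driver $\gamma^\e$ (no Stratonovich correction, as $\gamma^\e$ is of finite variation); this yields the LDP for $\{u^{\e,r}\}$ and $\{\tilde x^{\e,r}\}$ with good rate functions $I^{(r)}_{OM}$, $\tilde I^{(r)}_{OM}$ equal to the right-hand sides of \eqref{rate-u-1}, \eqref{rate-tilde-x-1} with the extra constraint that the skeleton path stays in $K_{r+1}$ (the truncated fields agree with the genuine ones there, so the skeleton equation can be read back on $OM$, just as in Theorem~\ref{cpct}).

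\emph{Localization and conclusion.} Fix $T>0$ and let $\tau_r^\e$ be the first exit time of $u^\e$ from the interior of $K_r$; the displacement bound gives $\{\tau_r^\e\le T\}\subseteq\{|V(\gamma^\e)|_T\ge r\}$, while $u^{\e,r}=u^\e$ on $[0,\tau_r^\e]$, so exponential tightness of $\{|V(\gamma^\e)|_T\}$ yields $\limsup_{\e\to0}\e\log\P^\e(\tau_r^\e\le T)\to-\infty$ as $r\to\infty$; thus $\{u^{\e,r}\}_r$ is an exponentially good approximation of $\{u^\e\}$ on $\C([0,T];OM)$, and similarly for $\tilde x^{\e,r}$. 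By the variation bound on the level sets of $I^\flat$ noted above, any finite-cost skeleton of \eqref{rate-u-1} stays, on $[0,T]$, inside a fixed $K_r$, which identifies the recovered rate function $\sup_{\delta>0}\liminf_{r\to\infty}\inf_{B(\cdot,\delta)}I^{(r)}_{OM}$ with \eqref{rate-u-1} and shows it is good; the approximation theorem \cite[Theorem 4.2.16]{DZ98} then gives the LDP on $\C([0,T];OM)$, and letting $T\to\infty$ via the Dawson--Gärtner projective limit theorem gives the LDP for $\{u^\e\}$ on $\C(\R_+;OM)$ with rate \eqref{rate-u-1}. The same argument applied through Lemma~\ref{transform} to $\tilde x^{\e,r}$ gives the LDP for $\{\tilde x^\e\}$ with rate \eqref{rate-tilde-x-1}, and since $x^\e=\pi(u^\e)$ with $\pi$ continuous, the contraction principle gives the LDP for $\{x^\e\}$ with rate \eqref{rate-x}. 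The main obstacle is precisely this localization step: establishing the displacement bound rigorously for the $M$-valued curve $x^\e$ (which is what ties together orthonormality of the frame, finite variation of $\gamma^\e$, and Hopf--Rinow), extracting from the hypotheses the uniform variation bound on the level sets of $I^\flat$, and verifying that the truncated rate functions are exponentially good approximations whose recovered limit is good and has the stated form. Everything else parallels the compact case.
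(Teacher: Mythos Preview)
Your proposal is correct and shares the paper's core geometric insight: the displacement bound $L(x^\e|_{[0,t]})=|V(\gamma^\e)|_t$ (orthonormality of the frame plus finite variation of $\gamma^\e$), which via Hopf--Rinow traps $u^\e$ in the compact set $K_r=\pi^{-1}(\bar B_M(\pi(u_0),r))$ whenever $|V(\gamma^\e)|_t\le r$; this yields both conservativeness and the key estimate $\P^\e(\tau_r^\e\le T)\le\P^\e(|V(\gamma^\e)|_T\ge r)$.

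The difference is purely in the localization machinery. The paper does not invoke exponentially good approximations or Dawson--G\"artner at all: it observes that after Whitney embedding the rolling equation is an SDE on $\R^p$ with \emph{locally} Lipschitz coefficients, and that the displacement bound makes $\{\sup_{s\le t}d(x_0,x^\e_s)\}_{\e>0}$ exponentially tight; then it simply quotes Corollary~\ref{local-Lip} (whose proof already contains the ``truncate, get exponential tightness on the truncated system, lift via stopping times'' argument you redo by hand). The rate function is read off from the formulas of Theorem~\ref{cpct} by setting $q\equiv0$ (since $m^\e\equiv0$), with no need for your variation-bound-on-level-sets lemma or the identification of $\sup_\delta\liminf_r\inf_{B(\cdot,\delta)}I^{(r)}_{OM}$. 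Your route is sound and self-contained---the lower-semicontinuity argument bounding $|V(y)|_T$ on sublevel sets of $I^\flat$ is a nice observation---but it is longer than necessary: Corollary~\ref{local-Lip} was set up precisely to absorb this step, and working directly on $\C(\R_+;\R^p)$ avoids the projective-limit passage through finite $T$.
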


\begin{proof}
  Fix $\e>0$. Denote by $T^\e$ the lifetime of $u^\e$. Assume, by contradiction, $\P^\e(T^\e<\infty) >0$, that is, $u^\e$ would explode at finite time $T^\e$ with positive probability. Since $\gamma^\e$ is locally of finite variation, each sample path of it is rectifiable, and for each $t>0$, the length of $\gamma^\e|_{[0,t]}$ is $V(\gamma^\e)_t$. Then by \eqref{sde-x-tilde}, for every $t>0$,
  \begin{equation*}
    L(x^\e|_{[0,t]}) = L\left( \left\{\int_0^r g^\e_s d\gamma^\e_s: r \in[0,t] \right\} \right) = \int_0^t |g^\e_s| dV(\gamma^\e)_s = V(\gamma^\e)_t.
  \end{equation*}
  That is, $x^\e$ is also rectifiable. Thus, the completeness of $M$ yields that the continuous path $x^\e|_{[0,T^\e]}$ converges as $t\to T^\e$, on the event $\{T^\e<\infty\}$. As a consequence, the horizontal lift $\tilde x^\e|_{[0,T^\e]}$ also converge as $t\to T^\e$. Since the equation \eqref{sde-g} can be globally solved, by \eqref{u-x-tilde}, the process $u^\e|_{[0,T^\e]}$ would converge as well, as $t\to T^\e$. This leads to a contradiction with the necessary explosion of $u^\e$. Therefore, the SDE \eqref{rolling} is conservative.

  For the second statement, we note that we have identified the rate functions in Theorem \ref{cpct}. Since each $\gamma^\e$ is locally of finite variation, $m^\e \equiv 0$ and the family $\{(\gamma^\e,\langle m^\e,m^\e\rangle)\}$ satisfies the LDP with the good rate function
  \begin{equation*}
    I(y,q) =
    \begin{cases}
      I^\flat(y), & \mbox{if } q \equiv 0, \\
      \infty, & \mbox{otherwise}.
    \end{cases}
  \end{equation*}
  Thus, the representation \eqref{rate-u-1} and \eqref{rate-tilde-x-1} follow from \eqref{rate-u} and \eqref{rate-tilde-x} respectively. Now it is only needed to prove that the family $\{u^\e\}$ and $\{\tilde x^\e\}$ are exponentially tight. We will use a similar argument as Corollary \ref{local-Lip}.

  Suppose first that there exists a constant $R>0$ such that $d(x_0,x^\e_t) \le R$ for all $t>0$, where $d$ is the Riemannian distance function on $M$. Let $K_R := \overline{B_M(x_0,R)}$. The completeness of $M$ implies $K_R = \exp(B_{T_{x_0}M}(0,R))$. Thus, $K_R$ is a compact submanifold of $M$. Since all the paths of $x^\e$ are contained in the submanifold $K_R$, all paths of $u^\e$ and $\tilde x^\e$ are contained in the bundle $OM|_{K_R} = O(K_R)$. Consequently, the SDEs \eqref{rolling} and \eqref{sde-x-tilde} can be viewed as equations on $O(K_R)$. The exponential tightness of $\{u^\e\}$ and $\{\tilde x^\e\}$ follows from Theorem \ref{cpct}.

  Now we prove the general case. For each $\e>0$ and $p>0$, define a stopping time
  \begin{equation*}
    T^{\e,p} = \inf\{ t\ge0: d(x_0,x^\e_t) \ge p \}.
  \end{equation*}
  Observe that
  \begin{equation*}
    d(x_0,x^\e_t) \le L(x^\e|_{[0,t]}) = V(\gamma^\e)_t.
  \end{equation*}
  Then
  \begin{equation*}
    \P^\e(T^{\e,p} \le T) = \P^\e\left( d(x_0,x^\e_t) \ge p \right) \le \P^\e\left( V(\gamma^\e)_t \ge p \right).
  \end{equation*}
  Then a similar localization argument in Step 2 of the proof of Proposition \ref{LDP-control} yields the exponential tightness.
\end{proof}

\subsubsection{Along random curves defined by stochastic differential equations}

For each $\e>0$, let $\gamma^\e$ be the solution of the following SDE:
\begin{equation}\label{random-pert}
  d\gamma^\e_t = b(t,\gamma^\e_t) dt + \sqrt\e dB_t, \quad \gamma^\e_0 = \gamma_0.
\end{equation}
where $B$ is a $d$-dimensional Brownian motion independent with $W$, $b: [0,\infty)\times\R^d \to\R^d$ is a function satisfying
\begin{equation*}
  |b(t,x)| \le C, \quad |b(t,x_1) - b(t,x_2)| \le C|x_1-x_2|,
\end{equation*}
for all $t\ge0$ and $x,x_1,x_2\in\R^d$ with some constant $C>0$. It is well-known that the SDE \eqref{random-pert} has a unique strong solution.

\begin{theorem}\label{non-cpct-2}
  Let $M$ be a 
  stochastically complete Riemannian manifold and assume there exists a constant $L>1$, such that the Ricci curvature is bounded below by $-L$. For each $\e>0$, let $\gamma^\e$ be the unique solution of \eqref{random-pert}. Then for each $\e>0$, the SDE \eqref{rolling} is conservative, and the families $\{u^\e\}_{\e>0}$ and $\{\tilde x^\e\}_{\e>0}$ both satisfy the LDP with the following good rate functions respectively:
  \begin{align}
    I_{OM}(u) &= \inf\left\{ \textstyle{\frac{1}{2}\int_0^\infty} (|\dot y(t) - b(t,y(t))|^2+|\dot f(t)|^2 ) dt:
    \begin{array}{l}
    u = u_0 + H_i(u)\cdot y^i + A^*_\alpha(u)\cdot f^\alpha, \\
    y,f\in \H^1
    \end{array}
    \right\}, \label{rate-u-2} \\
    \tilde I_{OM}(v) &= \inf\left\{ \textstyle{\frac{1}{2}\int_0^\infty} \left(|\dot y(t) - b(t,y(t))|^2+|\dot f(t)|^2 \right) dt:
    \begin{array}{l}
      v = u_0 + H_i(v)g_j^i\cdot y^j, \\
      g = \I_d + g A_\alpha \cdot f^\alpha, \\
      y,f\in \H^1
    \end{array}
    \right\}, \label{rate-tilde-x-2}
  \end{align}
  and the family $\{x^\e\}_{\e>0}$ satisfies the LDP with the good rate function \eqref{rate-x}.
\end{theorem}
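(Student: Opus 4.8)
The plan is to follow the architecture of the proof of Theorem \ref{non-cpct-1}: first establish that \eqref{rolling} is conservative, then read off the rate functions from Theorem \ref{cpct}, and finally remove the compactness hypothesis of that theorem by a localization argument. The one genuinely new ingredient---forced by the fact that $\gamma^\e$ now carries a nontrivial martingale part, so the rectifiability argument of Theorem \ref{non-cpct-1} is unavailable---is a \emph{radial comparison} estimate for the projected process $x^\e$, and this is where the lower Ricci bound will enter.

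For conservativeness and the rate functions, the argument would run as follows. By Lemma \ref{transform} the horizontal lift solves $d\tilde x^\e_t=H_i(\tilde x^\e_t)(g^\e)^i_j\circ d\gamma^{\e,j}_t$ with $g^\e_t\in SO(d)$; substituting \eqref{random-pert} and passing to Itô form, $x^\e=\pi(u^\e)$ is, on every finite interval, a diffusion on $M$ with generator $\tfrac{\e}{2}\Delta_M+Z^\e_t$, where $Z^\e_t$ is a time-dependent random vector field of uniformly bounded length (it comes from $H_i(g^\e)^i_j b^j$, with $g^\e$ orthogonal and $b$ bounded; there is no Stratonovich cross term between $g^\e$ and $B$ because $W\perp B$, cf.\ Lemma \ref{co-var}, and $\sum_i(g^\e)^i_j H_i$ still recovers the horizontal Laplacian since $g^\e\in SO(d)$). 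As $M$ is stochastically complete the $\tfrac12\Delta_M$-diffusion does not explode; on each $[0,T]$ a Girsanov transformation---Novikov's condition holds for every fixed $\e$ since $Z^\e$ is bounded---removes the drift and shows $x^\e$ does not explode on $[0,T]$, hence not at all, and then the compactness of the $\pi$-fibre and of $SO(d)$ together with the global solvability of \eqref{sde-g} lift this to non-explosion of $\tilde x^\e$ and $u^\e$. For the rate functions, \eqref{random-pert} has canonical decomposition $\gamma^\e=m^\e+a^\e$ with $m^\e=\sqrt\e B$ and $a^\e_\cdot=\int_0^\cdot b(s,\gamma^\e_s)\,ds$, so $\langle m^\e,m^\e\rangle_t=\e t\,\I_d\to 0$ and $G(\gamma^\e)_t\le C't$ deterministically for some constant $C'$, hence exponentially tight for each $t$; by the classical Freidlin--Wentzell theorem $\{\gamma^\e\}$ satisfies the LDP in $\C^d$ with rate function $\tfrac12\int_0^\infty|\dot y(t)-b(t,y(t))|^2\,dt$ on $\H^1$ (with $y(0)=\gamma_0$) and $+\infty$ otherwise, and since $\langle m^\e,m^\e\rangle\to 0$ the family $\{(\gamma^\e,\langle m^\e,m^\e\rangle)\}$ satisfies the LDP with rate function $I(y,q)$ equal to this when $q\equiv 0$ and $+\infty$ otherwise. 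Feeding this $I$ into \eqref{rate-u} and \eqref{rate-tilde-x} forces $q\equiv 0$ and $y\in\H^1$, which kills the $q$-terms and produces exactly \eqref{rate-u-2} and \eqref{rate-tilde-x-2}, with \eqref{rate-x} inherited; if $M$ were compact, Theorem \ref{cpct} would already finish the proof.

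So the crux is the exponential tightness of $\{u^\e\}$ and $\{\tilde x^\e\}$ in $\C(\R_+;OM)$; granting this, a subsequential LDP with a good rate function holds, and the rate function is identified---independently of the subsequence---through the defining SDE relations exactly as in Proposition \ref{LDP-control} and Theorem \ref{cpct}, after which $\{x^\e\}$ follows from the continuity of $\pi$. Set $x_0=\pi(u_0)$ and $T^{\e,p}:=\inf\{t\ge 0:d(x_0,x^\e_t)\ge p\}$. On $\{T^{\e,p}>T\}$ the paths $u^\e|_{[0,T]}$ and $\tilde x^\e|_{[0,T]}$ stay inside $OM|_{\overline{B_M(x_0,p)}}$, which is compact (or, without completeness, one exhausts $M$ by relatively compact domains and uses exit times from those), so there the exponential tightness supplied by Theorem \ref{cpct} applies; combining this with the escape estimate
\begin{equation*}
  \lim_{p\to\infty}\ \limsup_{\e\to 0}\ \e\log\P^\e\Big(\sup_{0\le t\le T}d(x_0,x^\e_t)\ge p\Big)=-\infty,
\end{equation*}
and the localization scheme already used in Proposition \ref{LDP-control} and Theorem \ref{non-cpct-1}, yields exponential tightness on every $[0,T]$, hence on $\R_+$. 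The escape estimate is where $\mathrm{Ric}\ge -L$ is indispensable: by the Laplacian comparison theorem $\Delta r$ is bounded above, off the cut locus, by a function $\phi(r)$ that behaves like $(d-1)/r$ near $r=0$ and is bounded away from $r=0$, so the radial semimartingale $r^\e_t=d(x_0,x^\e_t)$ is dominated, in Kendall's sense (the cut-locus correction being nonpositive), by a one-dimensional Itô process $\rho^\e$ with diffusion coefficient $\sqrt\e$ and drift at most $\tfrac{\e}{2}\phi(\rho^\e_t)+c$, the constant $c$ absorbing the bounded drift $Z^\e$; Brownian rescaling writes $\rho^\e$, up to the $O(1)$ drift, as $\sqrt\e$ times a Bessel-type process, so an exponential maximal inequality for $\rho^\e$---resting on the Gaussian tail $\P^\e(\sqrt\e\sup_{t\le T}|\beta_t|\ge p/4)\le 4e^{-p^2/(32\e T)}$ together with the Gaussian-type tail of the almost surely finite Bessel functional $\int_0^T ds/\eta_s$ governing the drift contribution---gives the estimate with rate tending to $-\infty$ as $p\to\infty$. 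The main obstacle of the proof is precisely to make this radial comparison and its exponential bound rigorous: controlling the cut locus, the singularity of $\phi$ at $r=0$, and the interplay of the $\sqrt\e$ scaling with the $O(1)$ size of $b$; as an alternative one could try to quote the corresponding exponential escape estimates for Brownian motion under a lower Ricci bound that underlie the manifold Freidlin--Wentzell theory of \cite{KRV18}.
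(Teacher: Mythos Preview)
Your overall architecture matches the paper's exactly: conservativeness via a Girsanov change of measure that turns $x^\e$ into a time-rescaled Riemannian Brownian motion, identification of the rate functions by feeding the Freidlin--Wentzell rate for $\gamma^\e$ (together with $\langle m^\e,m^\e\rangle_t=\e t\I_d\to 0$ and the trivial exponential tightness of $G(\gamma^\e)_t$) into the formulas \eqref{rate-u}--\eqref{rate-tilde-x} of Theorem~\ref{cpct}, and finally exponential tightness of $\{u^\e\}$, $\{\tilde x^\e\}$ via localization once one has the radial escape estimate $\lim_{p\to\infty}\limsup_{\e\to 0}\e\log\P^\e(\sup_{s\le T}d(x_0,x^\e_s)\ge p)=-\infty$.

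The one genuine divergence is in how the escape estimate is obtained. You propose a direct \emph{radial comparison}: Laplacian comparison under $\mathrm{Ric}\ge -L$, Kendall's semimartingale inequality for $r^\e_t=d(x_0,x^\e_t)$, and then an exponential maximal bound for the one-dimensional comparison process carrying both the $\sqrt\e$ diffusion and the $O(1)$ drift. The paper instead \emph{reuses the Girsanov transform} already set up for conservativeness: under $\tilde\P^\e_t$ the process $\{x^\e_{s/\e}\}_{s\le t}$ is a standard Riemannian Brownian motion, so the escape bound comes for free from \cite[Proposition~3.7]{KRV18}, and the remaining work is to transfer the estimate back to $\P^\e$ by splitting on $\{(Z^\e_t)^{-1}\gtrless e^{A/\e}\}$ and controlling the Radon--Nikodym density via H\"older's and Chebyshev's inequalities. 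Your route is viable, but you yourself flag its obstacles (cut locus, the $1/r$ singularity, making the Bessel-functional tail uniform in $\e$); the paper's route sidesteps all of this by decoupling the geometric estimate (pure Brownian motion, handled by \cite{KRV18}) from the drift (handled by soft measure-change bounds), and is noticeably shorter. The alternative you mention at the end---quoting \cite{KRV18}---is in fact what the paper does, but the point you may be missing is that the quote is made \emph{after} Girsanov, not directly for $x^\e$.
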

\begin{proof}
  Firstly, we show the non-explosion of \eqref{rolling}. For each $\e>0$, define $\zeta^\e$ to be the following integral process
  \begin{equation*}
    \zeta^\e_t := \int_0^t g^\e_s \circ d\gamma^\e_s = \int_0^t g^\e_s b(s,\gamma^\e_s) ds + \sqrt\e\int_0^t g^\e_s \circ dB_s = \int_0^t g^\e_s b(s,\gamma^\e_s) ds + \sqrt\e\int_0^t g^\e_s dB_s.
  \end{equation*}
  Here the last equality follows from $\langle g^\e, B \rangle = 0 $ by \eqref{sde-g} and the independence. Since each process $g^\e$ takes values in $O(d)$, we calculate the quadratic variation
  $$\left\langle \int_0^\cdot g^\e_s dB_s, \int_0^\cdot g^\e_s dB_s \right\rangle_t = \int_0^t g^\e_s (g^\e_s)^* ds = t \I_d.$$
  By L\'evy's characterization of Brownian motion (e.g., \cite[Theorem 3.3.16]{KS91}), the stochastic integral $\int_0^\cdot g^\e_s dB_s$ is a $d$-dimensional standard Brownian motion under $\P^\e$, which we denote as $\hat A^\e$. Then
  \begin{equation}\label{zeta}
    \zeta^\e_t = \int_0^t g^\e_s b(s,\gamma^\e_s) ds + \sqrt\e \hat A^\e_t.
  \end{equation}
  By \eqref{sde-x-tilde} and the definition of $\zeta^\e$, we have
  \begin{equation}\label{sde-rewrite}
    d\tilde x^\e_t = H_i(\tilde x^\e_t) \circ d\zeta^{\e,i}_t.
  \end{equation}
  Define
  \begin{equation}\label{Girsanov}
    Z^\e_t = \exp\left\{ -\frac{1}{\sqrt\e} \int_0^t g^\e_s b(s,\gamma^\e_s) d\hat A^\e_s -\frac{1}{2\e} \int_0^t |g^\e_s b(s,\gamma^\e_s)|^2 ds \right\}.
  \end{equation}
  Then the boundedness of $b$ and $g^\e$ yields that $Z^\e$ is a continuous martingale. Define, for each $t>0$, a probability measure on $\F^\e_t$ by $\tilde\P^\e_t(A) := \E^\e (Z_t^\e;A)$. By Girsanov theorem \cite[Theorem 3.5.1]{KS91}, each process $\{\zeta^\e_s\}_{0\le s \le t}$ in \eqref{zeta} is a Brownian motion with covariance matrix $\e \I_d$ under $\tilde\P^\e_t$, or equivalently, $\{\zeta^\e_{s/\e}\}_{0\le s \le t}$ is a standard Brownian motion under $\tilde\P^\e_t$.

  Denote by $T^\e$ the lifetime of each $x^\e$. Then for each $\e>0$ and $t>0$, the lifetime of the process $\{x^\e_{s/\e}\}_{0\le s \le t}$ is $(\e T^\e)\wedge t$. By \eqref{sde-rewrite}, we know that the process $\{x^\e_{s/\e}\}_{0\le s \le t}$ is a standard Riemannian Brownian motion under $\tilde\P^\e_t$. Since $M$ is stochastically complete, we have
  \begin{equation*}
    \tilde\P^\e_t( T^\e \ge t/\e ) = \tilde\P^\e_t( (\e T^\e)\wedge t = t ) =1.
  \end{equation*}
  Meanwhile, by \eqref{Girsanov} and \eqref{zeta}, it is easy to deduce
  \begin{equation*}
    (Z_t^\e)^{-1} = \exp\left\{ \frac{1}{\e} \int_0^t g^\e_s b(s,\gamma^\e_s) d\zeta^\e_s -\frac{1}{2\e} \int_0^t |g^\e_s b(s,\gamma^\e_s)|^2 ds \right\}.
  \end{equation*}
  Then $\{(Z_s^\e)^{-1}\}_{0\le s\le t}$ is a continuous martingale under $\tilde\P^\e_t$ with $\tilde\E^\e_t((Z_t^\e)^{-1}) = \tilde\E^\e_t((Z_0^\e)^{-1}) = 1$, since $\{\zeta^\e_s\}_{0\le s \le t}$ is a Brownian motion with covariance matrix $\e \I_d$ under this probability measure. Hence, for every $t>0$,
  \begin{equation*}
    \P^\e( T^\e \ge t/\e ) = \tilde\E^\e_t\left( (Z^\e_t)^{-1}; T^\e \ge t/\e \right) = \tilde\E^\e_t\left( (Z^\e_t)^{-1} \right) = 1.
  \end{equation*}
  This lead to $\P^\e( T^\e =\infty ) = 1$, which means for each $\e>0$, with probability 1 the process $x^\e$ does not explode. The horizontal lifts $\tilde x^\e$ will also not explode. Since $u^\e = \tilde x^\e g^\e$ with $SO(d)$-valued process $g^\e$ globally defined by Lemma \ref{transform}, the process $u^\e$ does not explode as well, for every $\e>0$.

  We now identify the rate functions. The canonical decomposition of each semimartingale $\gamma^\e$ is $\gamma^\e = a^\e + m^\e$, with $a^\e_t = \int_0^t b(s,\gamma^\e_s) ds$ and $m^\e_t = \sqrt\e B_t$. Then, $\langle m^\e,m^\e\rangle = \e t \I_d$ and
  \begin{equation*}
    G(\gamma^\e)_t = V(a^\e)_t + t = \int_0^t |b(s,\gamma^\e_s)| ds +t \le (C+1)t.
  \end{equation*}
  It follows that $\{G(\gamma^\e)_t\}$ is exponentially tight for each $t>0$. By the classical Freidlin-Wentzell theory and the fact the deterministic path $\langle m^\e,m^\e\rangle$ converges to zero in $\C^{d\times d}$ as $\e\to0$, the family $\{(\gamma^\e,\langle m^\e,m^\e\rangle)\}$ satisfies the LDP with the good rate function
  \begin{equation*}
    I(y,q) =
    \begin{cases}
      \frac{1}{2}\int_0^\infty|\dot y(t) - b(t,y(t))|^2dt, & \mbox{if } y\in \H^1,q=0, \\
      \infty, & \mbox{otherwise}.
    \end{cases}
  \end{equation*}
  The representation \eqref{rate-u-2} and \eqref{rate-tilde-x-2} follow from \eqref{rate-u} and \eqref{rate-tilde-x} respectively.

  To prove the exponential tightness of $\{u^\e\}$ and $\{\tilde x^\e\}$, as in the proof of Theorem \ref{non-cpct-1}, it is enough to show that for each $t>0$, the family $\{\sup_{0\le s\le t} d(x_0,x^\e_s)\}_{\e>0}$ is exponentially tight. Suppose first $b\equiv 0$. Then $d\tilde x^\e_t = H_i(\tilde x^\e_t) \circ d(\sqrt\e \hat B^{\e,i}_t) = H_i(\tilde x^\e_t) \circ d( \hat B^{\e,i}_{\e t})$, that is, the rescaled process $x^\e_{\cdot/\e}$ is a standard Riemannian Brownian motion. By \cite[Proposition 3.7]{KRV18}, we have
  \begin{equation*}
    \P^\e\left( \sup_{0\le s\le t} d(x_0,x^\e_s) \ge a \right) = \P^\e\left( \sup_{0\le s\le \e t} d(x_0,x^\e_{s/\e}) \ge a \right) \le 2 \exp \left\{ -\frac{(kL\e t - \frac{1}{2}a^2)^2}{2a^2 \e t} \right\}.
  \end{equation*}
  The exponential tightness follows. For the general case, we apply Girsanov's transform. Since $\{x^\e_{s/\e}\}_{0\le s \le t}$ is a standard Riemannian Brownian motion under $\tilde\P^\e_t$, we have
  \begin{equation}\label{est-10}
    \tilde\P^\e_t\left( \sup_{0\le s\le t} d(x_0,x^\e_s) \ge a \right) \le 2 \exp \left\{ -\frac{(kL\e t - \frac{1}{2}a^2)^2}{2a^2 \e t} \right\}.
  \end{equation}
  Hence, by the definition of $\tilde\P^\e_t$ and \eqref{est-10}, for any $A>0$,
  \begin{equation}\label{est-11}
    \begin{split}
      &\ \P^\e\left( \sup_{0\le s\le t} d(x_0,x^\e_s) \ge a \right) = \tilde\E^\e_t\left( (Z_t^\e)^{-1}; \sup_{0\le s\le t} d(x_0,x^\e_s) \ge a \right) \\
      =&\ \tilde\E^\e_t\left( (Z_t^\e)^{-1}; \sup_{0\le s\le t} d(x_0,x^\e_s) \ge a, (Z_t^\e)^{-1} < e^{A/\e}\right) + \tilde\E^\e_t\left( (Z_t^\e)^{-1}; \sup_{0\le s\le t} d(x_0,x^\e_s) \ge a, (Z_t^\e)^{-1} \ge e^{A/\e}\right) \\
      \le&\ e^{A/\e}\tilde\P^\e_t\left( \sup_{0\le s\le t} d(x_0,x^\e_s) \ge a \right) + \tilde\E^\e_t\left( (Z_t^\e)^{-1}; (Z_t^\e)^{-1} \ge e^{A/\e}\right) \\
      \le&\ 2 \exp \left\{ \frac{A}{\e}-\frac{(kL\e t - \frac{1}{2}a^2)^2}{2a^2 \e t} \right\} + \tilde\E^\e_t\left( (Z_t^\e)^{-1}; (Z_t^\e)^{-1} \ge e^{A/\e}\right).
    \end{split}
  \end{equation}
  For the second term in the last inequality, we use H\"older's inequality and Chebyshev's inequality to estimate,
  \begin{equation}\label{est-19}
    \begin{split}
      &\ \tilde\E^\e_t\left( (Z_t^\e)^{-1}; (Z_t^\e)^{-1} \ge e^{A/\e}\right) \\
      \le&\ \left[ \tilde\E^\e_t\left( (Z_t^\e)^{-2} \right) \right]^{1/2} \left[ \tilde\P^\e_t\left( (Z_t^\e)^{-1} \ge e^{A/\e}\right) \right]^{1/2} \\
      \le&\ \left[ \tilde\E^\e_t\left( \exp\left\{ \frac{2}{\e} \int_0^t g^\e_s b(s,\gamma^\e_s) d\zeta^\e_s -\frac{1}{\e} \int_0^t |g^\e_s b(s,\gamma^\e_s)|^2 ds \right\} \right) \right]^{1/2} e^{-A/(2\e)} \\
      \le&\ \bigg[ \tilde\E^\e_t\left( \exp\left\{ \frac{4}{\e} \int_0^t g^\e_s b(s,\gamma^\e_s) d\zeta^\e_s -\frac{8}{\e} \int_0^t |g^\e_s b(s,\gamma^\e_s)|^2 ds \right\} \right) \\
      &\ \times \tilde\E^\e_t\left(\exp\left\{ \frac{6}{\e} \int_0^t |g^\e_s b(s,\gamma^\e_s)|^2 ds \right\} \right) \bigg]^{1/4} e^{-A/(2\e)} \\
      \le&\ \exp\left\{ \frac{3C^2t}{2\e} - \frac{A}{2\e} \right\},
    \end{split}
  \end{equation}
  where in the last inequality we used the fact that $\{\exp\{ \frac{4}{\e} \int_0^r g^\e_s b(s,\gamma^\e_s) d\zeta^\e_s -\frac{8}{\e} \int_0^r |g^\e_s b(s,\gamma^\e_s)|^2 ds \} \}_{0\le r\le t}$ is a continuous martingale under $\tilde\P^\e_t$. Therefore, combining \eqref{est-11} and \eqref{est-19}, we have
  \begin{equation*}
    \limsup_{\e\to0} \e\log\P^\e\left( \sup_{0\le s\le t} d(x_0,x^\e_s) \ge a \right) \le \left( A-\frac{a^2}{8t} \right) \vee \left( \frac{3C^2t}{2} - \frac{A}{2} \right).
  \end{equation*}
  The exponential tightness of $\{\sup_{0\le s\le t} d(x_0,x^\e_s)\}_{\e>0}$ follows by letting first $a\to\infty$ and then $A\to\infty$.
\end{proof}

\section{Examples}\label{sec-exmp}

Throughout this section, we fix a differentiable curve $\gamma:[0,\infty)\to \R^d$ satisfying $|\dot\gamma_t|\le C$ for all $t\ge0$ with some constant $C>0$. Then obviously $\gamma$ is rectifiable. We let $M$ be a geodesically and stochastically complete Riemannian manifold.

\begin{example}[Random perturbation]
  A simple case is the Euclidean curve which the manifold $M$ rolls along is a random perturbation of a given curve. To be precise, consider the following family of random perturbation of $\gamma$:
  \begin{equation*}
    \gamma^\e_t = \gamma_t + \sqrt\e B_t.
  \end{equation*}
  Then $\gamma^\e$ solves the SDE \eqref{random-pert} with $b \equiv \dot\gamma$. And Theorem \ref{non-cpct-2} is applicable here.
  \qed
\end{example}

In the next examples, we will study the rolling mode along pathwise rectifiable random perturbation. We will make sure Theorem \ref{non-cpct-1} is applicable, by checking that the family $\{\gamma^\e\}_{\e>0}$ satisfies the LDP with some good rate function, and the family $\{|V(\gamma^\e)|_t \}_{\e>0}$ is exponentially tight for each $t> 0$. The following lemma is a criterion for large deviation principle for the processes with locally finite variation, which is adapted from \cite[Theorem 2.1]{Puh94}.
\begin{lemma}\label{LDP-char}
  Let $\{A^\e\}_{\e>0}$ be a family of continuous adapted processes with locally finite variation. Let $g: [0,\infty) \to \R^d$ be a Borel function such that $\int_0^\infty |g(t)|dt <\infty$. Let $a_t = \int_0^t g(s) ds$. If for every $T>0$ and any $\eta>0$,
  \begin{equation*}
    \limsup_{\e\to0} \e\log \P^\e\left( \sup_{0\le t\le T} |A^\e_t - a_t| \ge \eta \right) = -\infty.
  \end{equation*}
  Then the family $\{A^\e\}_{\e>0}$ satisfies the LDP with the good rate function
  \begin{equation*}
    I(f) =
    \begin{cases}
      \int_0^\infty \sup_{\lambda\in\R^d}\langle\lambda, \dot f(t)- g(t)\rangle dt, & f \text{ is absolutely continuous}, \\
      \infty, & \text{otherwise}.
    \end{cases}
  \end{equation*}
\end{lemma}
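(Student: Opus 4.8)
The plan is the following. First I would note that the rate function in the statement is in fact \emph{degenerate}: for $v\in\R^d$ one has $\sup_{\lambda\in\R^d}\langle\lambda,v\rangle=0$ if $v=0$ and $=+\infty$ otherwise, so $I(f)$ equals $0$ precisely when $f$ is absolutely continuous with $\dot f=g$ a.e., and $+\infty$ in all other cases. Reading the ``absolutely continuous'' alternative as also requiring $f(0)=0$ (otherwise $\{I=0\}$ would be an unbounded affine subspace of $\C^d$ and $I$ could not be good; this normalization is consistent with the hypothesis, which forces $A^\e_0\to 0$ superexponentially), we then have $\{I=0\}=\{a\}$, where $a_t=\int_0^t g(s)\,ds$ lies in $\C^d$ since $g$ is integrable. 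Hence every level set $\Phi_I(\alpha)$ is either $\{a\}$ or $\emptyset$, so $I$ is automatically a good rate function, and the lemma reduces to the assertion that superexponential convergence $A^\e\to a$ (uniformly on each $[0,T]$) entails the LDP with the point-mass rate function at $a$. The plan is then to verify the LDP upper and lower bounds directly, using that the sets $\{f\in\C^d:\|f-a\|_T<\delta\}$, $T,\delta>0$, form a neighbourhood base of $a$ in the local uniform topology (the seminorms $\|\cdot\|_T$ are monotone in $T$, so a finite intersection of such sets contains a single one).

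For the upper bound it suffices to treat closed $F\subseteq\C^d$. If $a\in F$ then $\inf_F I=0$ and there is nothing to prove since $\P^\e(A^\e\in F)\le 1$. If $a\notin F$, then $F^c$ is an open neighbourhood of $a$, so $\{f:\|f-a\|_T<\delta\}\subseteq F^c$ for some $T,\delta>0$; therefore $\{A^\e\in F\}\subseteq\{\|A^\e-a\|_T\ge\delta\}$ and the hypothesis yields $\limsup_{\e\to0}\e\log\P^\e(A^\e\in F)\le\limsup_{\e\to0}\e\log\P^\e(\|A^\e-a\|_T\ge\delta)=-\infty=-\inf_F I$, the last equality because $I\equiv+\infty$ off $a$. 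For the lower bound it suffices to treat open $G\subseteq\C^d$. If $\inf_G I=+\infty$ there is nothing to prove; otherwise $\inf_G I=0$, which forces $a\in G$ since $\{I=0\}=\{a\}$, so $\{f:\|f-a\|_T<\delta\}\subseteq G$ for some $T,\delta>0$, whence $\P^\e(A^\e\in G)\ge\P^\e(\|A^\e-a\|_T<\delta)=1-\P^\e(\|A^\e-a\|_T\ge\delta)\to 1$ and $\liminf_{\e\to0}\e\log\P^\e(A^\e\in G)\ge 0=-\inf_G I$. Together with the goodness of $I$ this is the claimed LDP.

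Given how strong the hypothesis is, there is no real analytic obstacle here; the only matters requiring attention are (i) pinning down that the displayed formula indeed collapses to the point mass at $a$ and is a good rate function — this is exactly where the $f(0)=0$ normalization must be used, so that the zero level set is the singleton $\{a\}$ and not an unbounded affine set — and (ii) identifying the local uniform topology of $\C^d=\C(\R_+;\R^d)$ finely enough that ``$A^\e$ close to $a$ on every compact interval'' becomes ``$A^\e$ close to $a$ in $\C^d$''. Alternatively, one may derive the lemma by specialising the general semimartingale large deviation theorem \cite[Theorem 2.1]{Puh94}: each $A^\e$ being of locally finite variation, its only predictable characteristic is the drift; the hypothesis (after one records the exponential tightness of $\{A^\e\}$, which follows from $\sup_{[0,T]}|A^\e_t-a_t|\to0$ superexponentially together with $a_t=\int_0^t g$) identifies the limiting local cumulant generating function as the linear functional $\lambda\mapsto\langle\lambda,g(t)\rangle$, whose Legendre transform is $v\mapsto\sup_{\lambda}\langle\lambda,v-g(t)\rangle$, giving the stated rate $\int_0^\infty\sup_\lambda\langle\lambda,\dot f(t)-g(t)\rangle\,dt$. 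The direct argument above simply bypasses this machinery.
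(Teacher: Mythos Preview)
Your proposal is correct. The paper does not give its own proof of this lemma at all: it simply states the result and records that it ``is adapted from \cite[Theorem 2.1]{Puh94}'', i.e.\ it invokes exactly the general semimartingale LDP machinery you mention in your final paragraph as the alternative route.

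Your direct argument is genuinely different and more elementary. You observe that the displayed rate function collapses to the point mass at $a$ (under the $f(0)=0$ normalization, which you correctly flag as necessary for goodness), and then you verify the LDP upper and lower bounds straight from the hypothesis using only the neighbourhood base $\{\|f-a\|_T<\delta\}$ of $a$ in $\C^d$. This bypasses Puhalskii's characteristic-triplet framework entirely and makes transparent that the lemma is really just the statement ``superexponential convergence to a point implies LDP with rate function concentrated at that point''. The paper's approach, by contrast, buys nothing extra here beyond the convenience of citing an existing theorem; your argument is self-contained and shows exactly what is going on. Your closing paragraph already subsumes the paper's route, so in effect you have given both proofs.
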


In the next two examples, we will consider the rolling procedure mixed by random slipping. To indicate the slipping moments and slipping duration, we need a Poisson point process. For this purpose, we introduce, for each $\e>0$, a subordinator $S^\e = \{S^\e_t\}_{t\ge0}$, which has zero drift and jump measure $\nu^\e$ with $\nu^\e((0,\infty))<\infty$. Note that in this case, each $S^\e$ is also a one-dimensional compound Poisson process, with rate $\lambda(\e) := \nu^\e((0,\infty))$ and jump size distribution $\mu^\e:= \frac{1}{\lambda(\e)} \nu^\e$.
In addition, almost surely, the jumping times of each $S^\e$ are infinitely many and countable in increasing order (see \cite[Theorem 21.3]{Sat99}). We define the jumping time of each $S^\e$ recursively by
\begin{equation}\label{jump-time}
  \begin{split}
    \tau^\e_1 &= \inf\{t\ge0: \Delta S^\e_t \ne 0\}, \\
  \tau^\e_{k+1} &= \inf\{t>\tau^\e_k: \Delta S^\e_t \ne 0\}, \quad k\in \N_+.
  \end{split}
\end{equation}
These stopping times indicate the moments when every random slipping will occur. Let $e^\e = \{e^\e(t)\}_{t\ge0}$ be the associated Poisson point process valued on $[0,\infty)$, namely, $e^\e = \Delta S^\e$. The value of $e^\e$ on each jumping time $\tau^\e_k$ indicates the duration of each random slipping. We refer to \cite{Ber96,Sat99} for more on the topics of subordinators, compound Poisson processes and Poisson point processes.
\begin{figure}[htbp]
  \centering

  \includegraphics[width=0.8\textwidth]{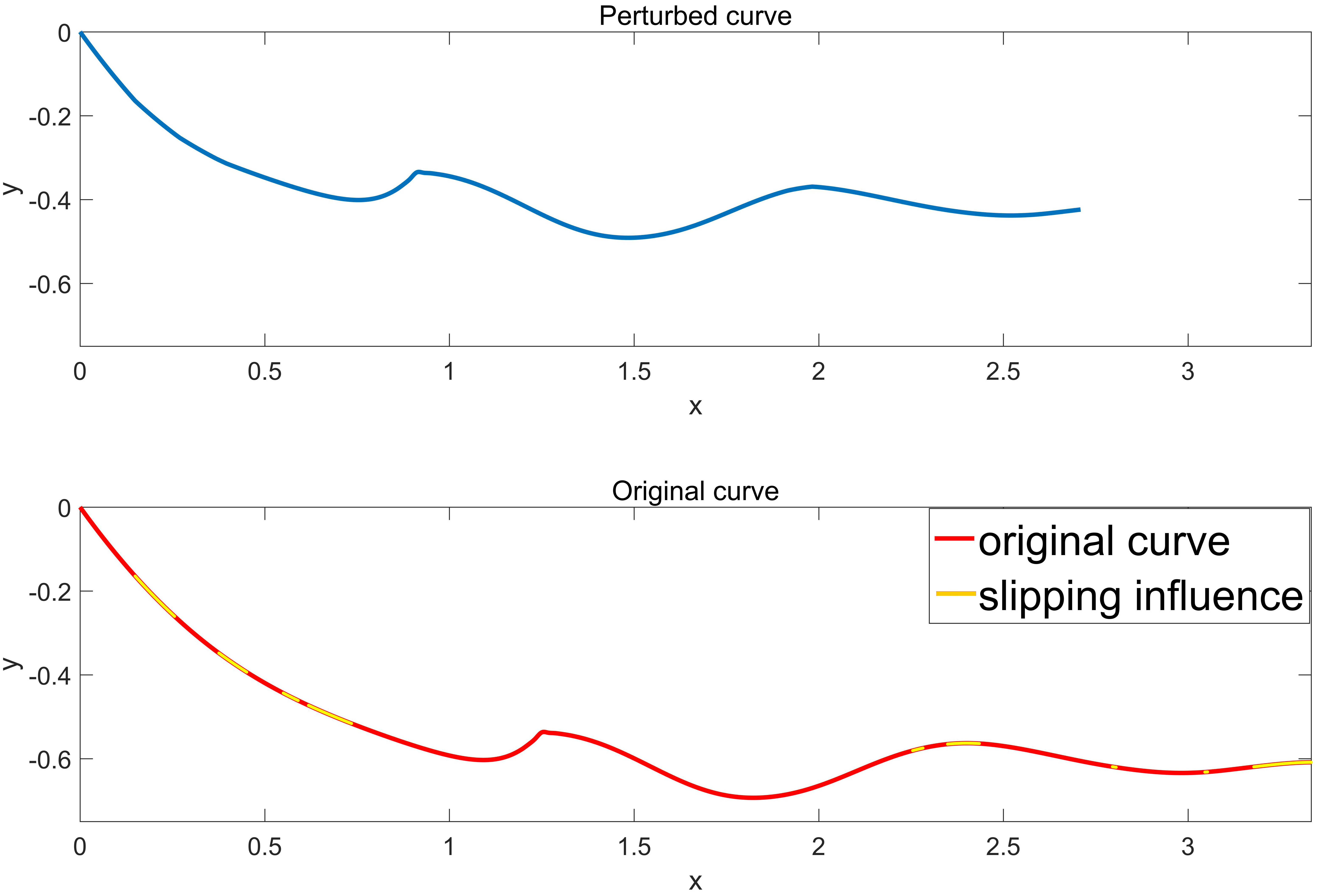}
  \caption{``Slipping mode'' in Example \ref{EX1}}
  \label{53C}
\end{figure}

\begin{example}[Translational slipping]\label{EX1}
  Consider the case that each slipping is translational along the given curve, that is, the contact point on the manifold will be rest while the contact point on  Euclidean space will move along $\gamma$ from the slipping starting point to the slipping end point. As is shown in Fig.~\ref{53C}, slipping will cut off the yellow parts in the original curve, and connect the remaining parts piece-by-piece. The perturbed curve is marked blue in Fig.~\ref{53C}. To be precise, the equivalent Euclidean curve that one can slip the manifold along is given by
  \begin{equation*}
    \gamma^\e_t =
    \begin{cases}
      \gamma_t, & 0\le t< \tau^\e_1, \\
      \gamma_{\tau^\e_1}, & \tau^\e_1 \le t< \tau^\e_1 + e^\e(\tau^\e_1), \\
      \gamma_t - \gamma_{\tau^\e_1+e^\e(\tau^\e_1)} + \gamma_{\tau^\e_1}, & \tau^\e_1 + e^\e(\tau^\e_1) \le t < \tau^\e_1 + e^\e(\tau^\e_1) + \tau^\e_2, \\
      \cdots, &
    \end{cases}
  \end{equation*}
  for each $\e>0$. Then each $\gamma^\e$ is locally of finite variation, and we have
  \begin{equation*}
    \sup_{0\le t\le T}|\gamma^\e_t-\gamma_t| \le C \sum_{0\le t\le T} e^\e(t) = C S^\e_T.
  \end{equation*}
  If the family $\{S^\e\}_{\e>0}$ are supposed to satisfy that
  \begin{equation}\label{subord}
    \limsup_{\e\to0} \e \log \P^\e\left( S^\e_T \ge\eta \right) = -\infty, \quad \text{for each } T>0 \text{ and } \eta>0,
  \end{equation}
  then the LDP for $\{\gamma^\e\}_{\e>0}$ holds, by Lemma \ref{LDP-char}, with the good rate function
  \begin{equation}\label{LDP-exmp}
    I^\flat(y) =
    \begin{cases}
      \int_0^\infty \sup_{\lambda\in\R^d}\langle\lambda, \dot y(t)- \dot\gamma(t)\rangle dt, & y \text{ is absolutely continuous}, \\
      \infty, & \text{otherwise}.
    \end{cases}
  \end{equation}
  Moreover, it is easy to see that $|V(\gamma^\e)| \le |V(\gamma)|$, and the exponential tightness of $\{|V(\gamma^\e)|_t\}$, for each $t>0$, follows. Therefore, Theorem \ref{non-cpct-1} is applicable here.

\begin{figure}[htbp]
  \centering
  \subfloat[ ]{
      \begin{minipage}{195pt}
          \centering
          \label{53CP}
          \includegraphics[width=1\textwidth]{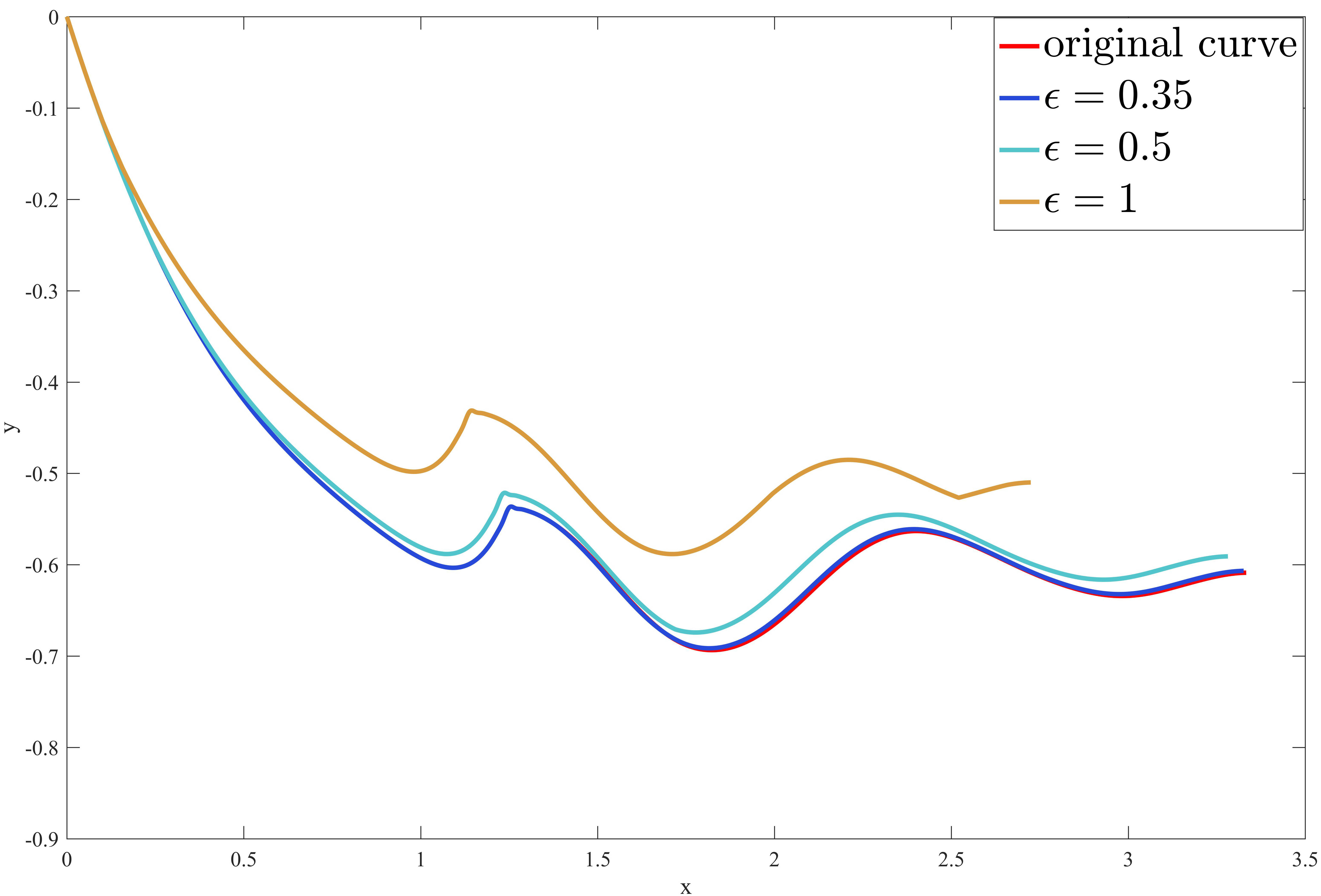}
      \end{minipage}
  }
  \subfloat[ ]{
      \begin{minipage}{145pt}
          \centering
          \label{53CS}
          \includegraphics[width=1\textwidth]{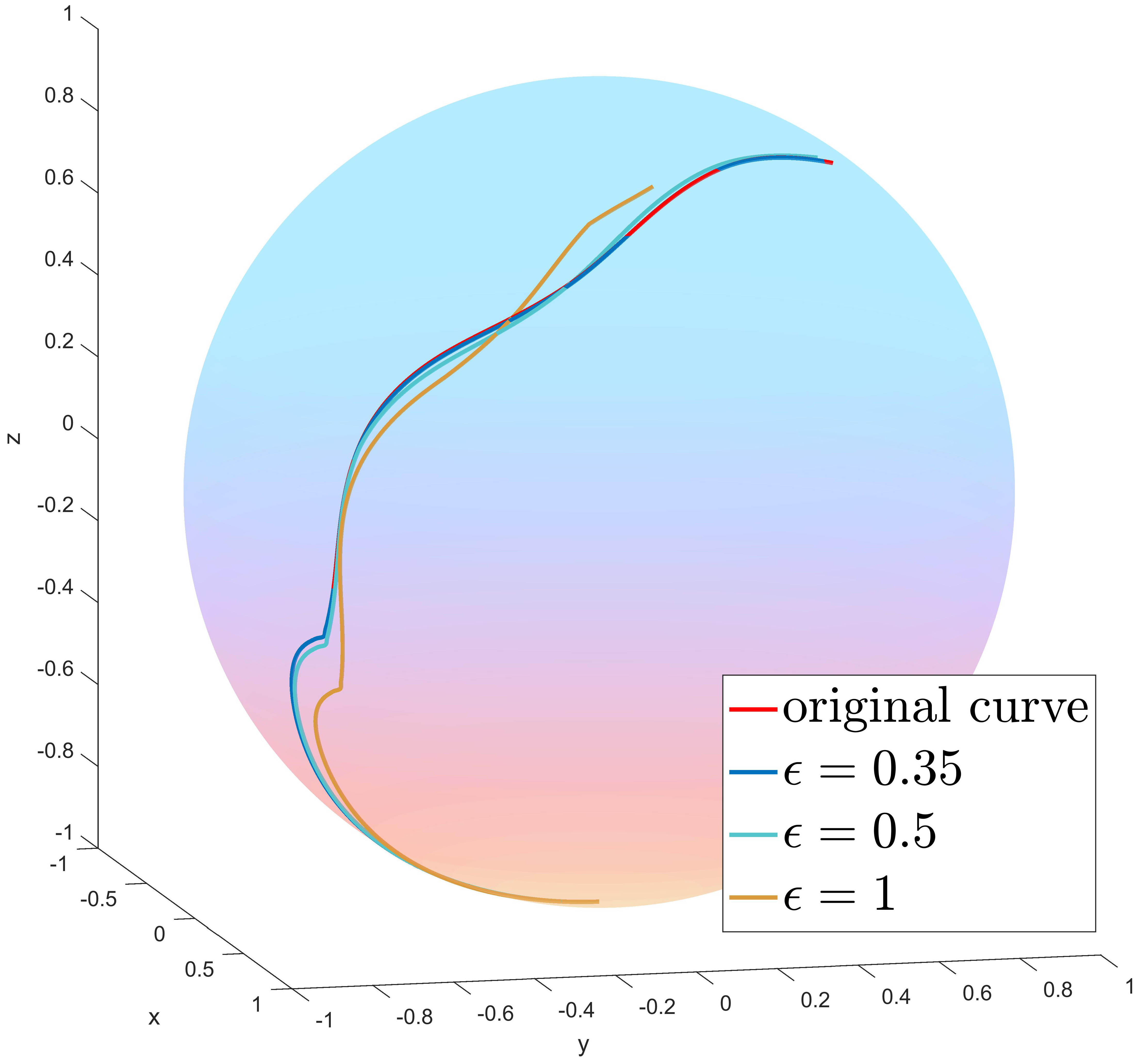}
      \end{minipage}
  }
  \caption{The original and perturbed curves on the plane and their traces on the unit ball, in Example \ref{EX1}} \label{53}
\end{figure}

  A sufficient condition for the assumption \eqref{subord} in terms of the jump measure $\nu^\e$ can be found. Indeed, by Chebyshev's inequality we have
  $\P^\epsilon(S_T^\epsilon \geq \eta) \leq \frac{1}{\eta}\E^\epsilon S_T^\epsilon$.
  If we force
  $$
    \lim_{\epsilon \to 0} \epsilon \log \E^\epsilon \left(S_T^\epsilon\right) = -\infty,
  $$
  then \eqref{subord} is fulfilled. Using the relation between the mean and the characteristic function, we have
  $$
    \E^\epsilon \left(S_T^\epsilon\right) = T \int_0^\infty x \nu^\epsilon(dx).
  $$
  Thus we can propose a sufficient condition \eqref{subord} as follows,
  \begin{equation}\label{suff-cond}
    \lim_{\epsilon \to 0} \epsilon \log \int_0^\infty x \nu^\epsilon(dx)=-\infty.
  \end{equation}
  The condition \eqref{suff-cond} means asymptotically as $\epsilon \to 0$,
  $$
    \int_0^\infty x \nu^\epsilon(dx) \sim \exp\left(\frac{-l(\epsilon)}{\epsilon }\right),
  $$
  where $l(\epsilon) \to +\infty$ as $\epsilon \to 0$. A suitable examples for  $\nu^\epsilon$ is
  $$
    \nu^\epsilon(dx)=f\left(x\exp\left(\frac{l(\epsilon)}{2\epsilon }\right)\right) dx,
  $$
  where $f$ is some function on $\Ro d$ satisfying $\int_0^\infty xf(x)dx<\infty$.

  As an example of visualization, we take $f(x) = e^{-x}$ and $l(\e) = 2\e^{-0.1}$ so that
  \begin{equation}\label{exmp-nu}
    \nu^\epsilon(dx)=\exp\left[-x\exp\left(1/\epsilon^{1.1}\right)\right]dx.
  \end{equation}
  We visualize the original and perturbed curves both on the plane and on the unit ball in Fig.~\ref{53}. The perturbed curves on the unit ball are just the traces left on the ball when being rolled along the original planar curve with random slipping. It shows that as $\epsilon$ getting smaller, the perturbed curves gets closer to the original one both on the plane and on the ball.
  \qed
\end{example}

\begin{figure}[htbp]
  \centering
  \includegraphics[width=0.8\textwidth]{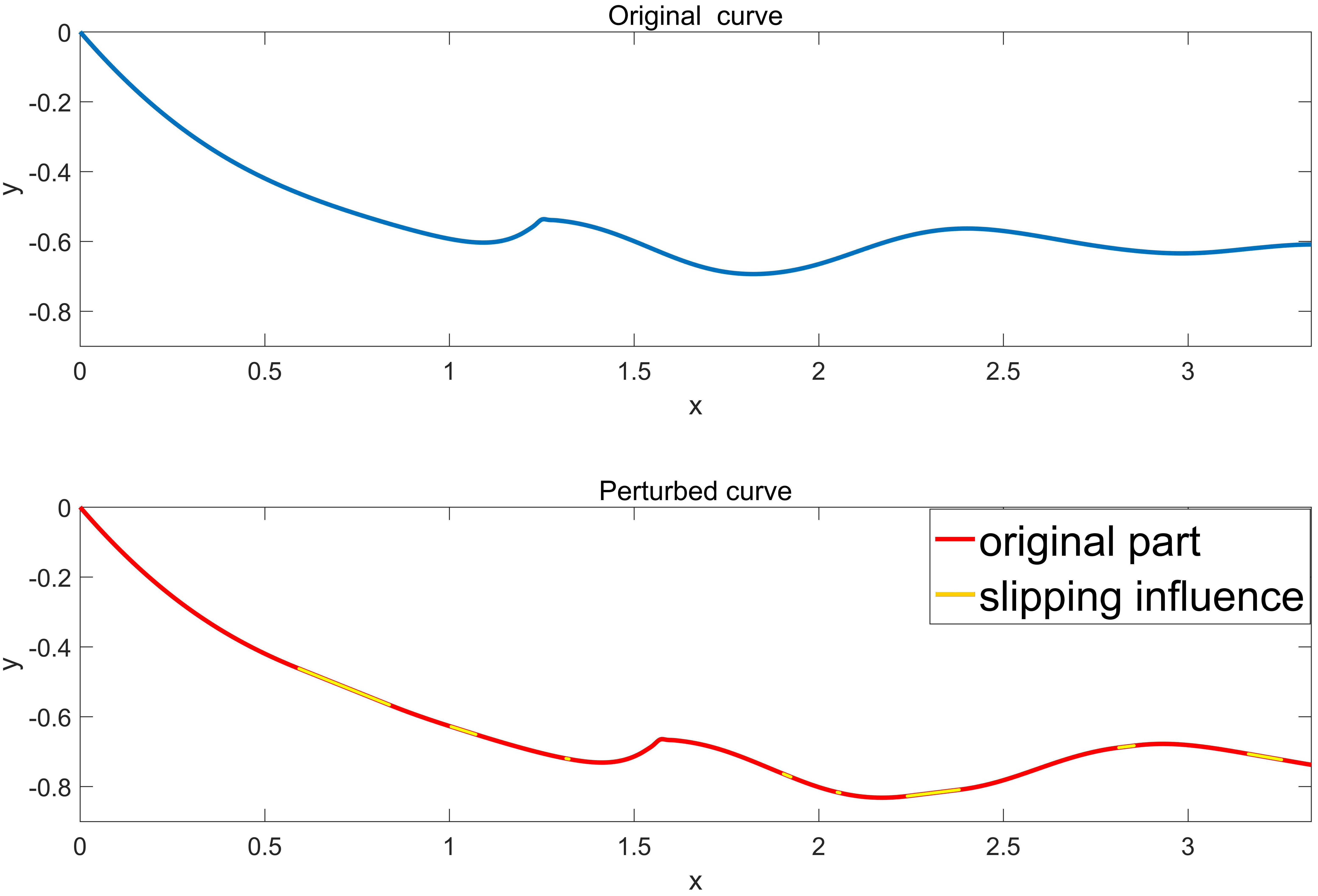}
  \caption{``Slipping mode'' in Example \ref{EX2}}
  \label{54C}
\end{figure}

\begin{example}[Slipping in place]\label{EX2}
  There is another slipping mode: the slipping only happens in-place. In this mode, the contact point on the Euclidean space will rest, but the contact point on the manifold will move along the geodesic that starts from the slipping starting point with initial speed the tangent vector of the curve $\gamma$ at the slipping starting point. As is shown in in Fig.~\ref{54C}, slipping will extend the original curve with straight lines along the tangent direction of the slipping starting points. These straight lines are marked yellow in the perturbed curve. For each $\e>0$, the equivalent curve is given by
  \begin{equation*}
    \gamma^\e_t =
    \begin{cases}
      \gamma_t, & 0\le t< \tau^\e_1, \\
      \gamma_{\tau^\e_1} +(t-\tau^\e_1)\dot\gamma_{\tau^\e_1}, & \tau^\e_1 \le t< \tau^\e_1 + e^\e(\tau^\e_1), \\
      \gamma_{t- e^\e(\tau^\e_1)} + e^\e(\tau^\e_1) \dot\gamma_{\tau^\e_1}, & \tau^\e_1 + e^\e(\tau^\e_1) \le t < \tau_1 + e^\e(\tau^\e_1) + \tau^\e_2, \\
      \cdots. &
    \end{cases}
  \end{equation*}
  Then we have
  \begin{equation*}
    \sup_{0\le t\le T}|\gamma^\e_t-\gamma_t| \le 2C \sum_{0\le t\le T} e^\e(t) = 2C S^\e_T.
  \end{equation*}
  Same as before, the condition \eqref{subord} yields the LDP for $\{\gamma^\e\}_{\e>0}$, with the good rate function \eqref{LDP-exmp}. On the other hand, it is easy to derive
  \begin{equation*}
    |V(\gamma^\e)|_t \le |V(\gamma)|_t + C \sum_{0\le s\le t} e^\e(s) \le C(t+S_t^\e).
  \end{equation*}
  Hence, condition \eqref{subord} also implies the exponential tightness of $\{|V(\gamma^\e)|_t\}$, for each $t>0$.

  The original and perturbed curves on the plane and the unit ball in this slipping mode are visualized in Fig.~\ref{54}, with the same $\nu^\e$ as the one in \eqref{exmp-nu}.
  \qed
  \begin{figure}
    \centering
    \subfloat[ ]{
        \begin{minipage}{195pt}
            \centering
            \label{54CP}
            \includegraphics[width=1\textwidth]{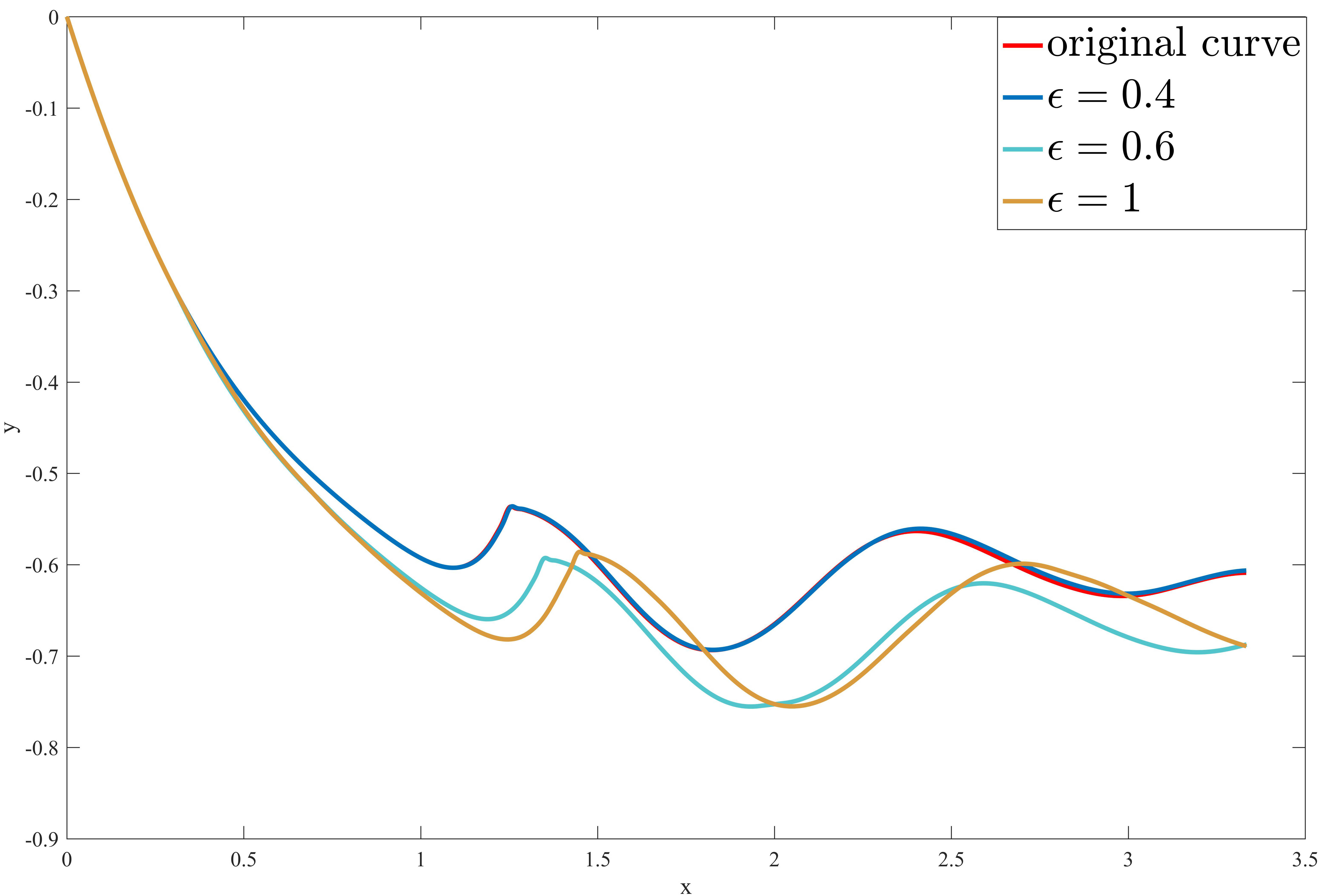}
        \end{minipage}
    }
    \subfloat[ ]{
        \begin{minipage}{145pt}
            \centering
            \label{54CS}
            \includegraphics[width=1\textwidth]{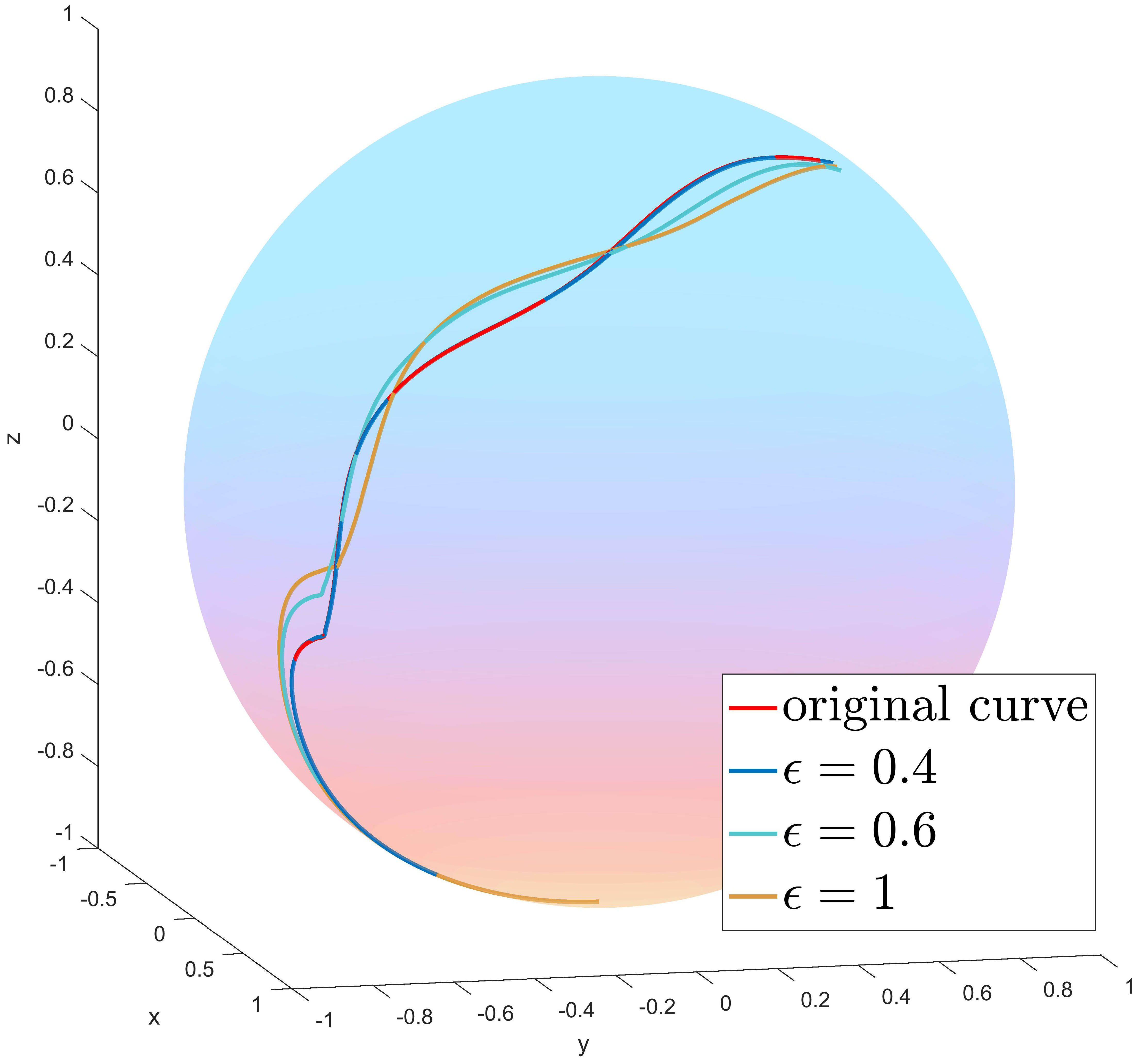}
        \end{minipage}
    }
    \caption{The original and perturbed curves on the plane and their traces on the unit ball, in Example \ref{EX2}} \label{54}
  \end{figure}
\end{example}

%

\begin{example}[Piecewise linear approximation]\label{EX3}

  In this example, we consider the rolling along the piecewise linear approximation of $\gamma$. Assume in addition that the speed function $\dot\gamma$ is Lipschitz with Lipschitz constant less than $C>0$. For each $\e>0$, let $\{\tau^\e_k\}_{k=1}^\infty$ be the sequence of stopping times defined in \eqref{jump-time}. Set $\tau_0^\e\equiv0$ for simplicity. We define the speed function of the approximation curve by
  \begin{equation*}
    \xi^\e_t = \dot\gamma_{\tau^\e_k}, \quad\text{when } \tau^\e_k \le t < \tau^\e_{k+1}, k\ge0.
  \end{equation*}
  Then $|\xi_t^\e|<C$ for $t\ge0$ and $\e>0$. Define the approximation curves by
  \begin{equation*}
    \gamma^\e_t = \int_0^t \xi^\e_s ds, \quad \e>0.
  \end{equation*}
  Then each $\gamma^\e$ is piecewise linear and locally of finite variation with variation satisfying $|V(\gamma^\e_t)| \le Ct$. Hence, for each $t>0$, the family $\{|V(\gamma^\e)|_t \}_{\e>0}$ is exponentially tight.

  Since $\dot\gamma$ is Lipschitz, we have
  \begin{equation*}
    \sup_{0\le t\le T}|\gamma^\e_t-\gamma_t| \le \int_0^T |\xi^\e_s-\dot\gamma_s| ds \le
    \begin{cases}
      CT^2, & 0<T\le\tau^\e_1, \\
      CT \sup_{0\le k\le m-1} (\tau^\e_{k+1} - \tau^\e_k) \vee (T-\tau_m^\e), & \tau^\e_m < T \le \tau^\e_{m+1}, m\ge1.
    \end{cases}
  \end{equation*}
  Then for every $T>0$ and any $\eta>0$,
  \begin{equation}\label{est-13}
    \begin{split}
      &\ \P^\e \left( \sup_{0\le t\le T} |\gamma^\e_t - \gamma_t| \ge \eta \right) \\
      \le&\ \P^\e\left( CT^2\ge \eta, \tau^\e_1\ge T \right) + \sum_{m=1}^\infty \P^\e \left( CT \sup_{0\le k\le m-1} (\tau^\e_{k+1} - \tau^\e_k) \vee (T-\tau_m^\e) \ge \eta, \tau^\e_m < T \le \tau^\e_{m+1} \right)  \\
      =: &\ I^\e + \sum_{m=1}^\infty J^\e_m.
    \end{split}
  \end{equation}
  Denote $\lambda(\e) := \nu^\e((0,\infty))$. Since the sequence $\{\tau^\e_{k+1} - \tau^\e_k\}_{k=0}^\infty$ constitutes independent identically distributed random variables, each exponentially distributed with mean $1/\lambda(\e)$ (see the proof of \cite[Theorem 21.3]{Sat99}), we have
  \begin{equation}\label{est-14}
    I^\e = \ind_{\{CT^2\ge \eta\}} \P^\e\left( \tau^\e_1\ge T \right) = \ind_{\{CT^2\ge \eta\}} e^{-\lambda(\e) T},
  \end{equation}
  and
  \begin{equation}\label{est-15}
    \begin{split}
      J^\e_m = &\ \P^\e \left( CT \sup_{0\le k\le m-1} (\tau^\e_{k+1} - \tau^\e_k) < \eta \right) \P^\e \left( CT(T-\tau_m^\e) \ge \eta, \tau^\e_m < T \le \tau^\e_{m+1} \right) \\
         &\ +\P^\e \left( CT \sup_{0\le k\le m-1} (\tau^\e_{k+1} - \tau^\e_k) \ge \eta \right) \P^\e \left( \tau^\e_m < T \le \tau^\e_{m+1} \right).
    \end{split}
  \end{equation}
  Using the fact that $\tau^\e_m$ obeys the Gamma distribution $\text{Gamma}(m,\lambda(\e))$, we deduce
  \begin{equation}\label{est-16}
    \begin{split}
      &\ \P^\e \left( \tau^\e_m < T \le \tau^\e_{m+1} \right) = \E^\e \left[ \P^\e \left( \tau^\e_m < T \le \tau^\e_{m+1} \mid \tau^\e_{m+1} - \tau^\e_m \right) \right] \\
      =&\ \int_0^T \P^\e \left( \tau^\e_m < T \le \tau^\e_m +t \right) \lambda(\e) e^{-\lambda(\e) t} dt + \int_T^\infty \P^\e \left( \tau^\e_m < T \right) \lambda(\e) e^{-\lambda(\e) t} dt \\
      =&\ \left( \int_0^T \int_{T-t}^T  + \int_T^\infty \int_0^T \right) \frac{\lambda(\e)^m x^{m-1} e^{-\lambda(\e)x}}{(m-1)!} \lambda(\e) e^{-\lambda(\e) t} dxdt \\
      =&\ e^{-\lambda(\e)T}\frac{\lambda(\e)^m T^m}{m!},
    \end{split}
  \end{equation}
  and similarly,
  \begin{equation}\label{est-17}
    \begin{split}
      &\ \P^\e \left( CT(T-\tau_m^\e) \ge \eta, \tau^\e_m < T \le \tau^\e_{m+1} \right) \\
      =&\ \ind_{\{CT^2\ge \eta\}} \P^\e \left( \tau^\e_m + \eta/(CT) < T \le \tau^\e_{m+1} \right) \\
      =&\ \ind_{\{CT^2\ge \eta\}}e^{-\lambda(\e)T} \frac{\lambda(\e)^m (T-\frac{\eta}{CT})^m}{m!}.
    \end{split}
  \end{equation}
  Using again the fact that the sequence $\{\tau^\e_{k+1} - \tau^\e_k\}_{k=0}^\infty$ is i.i.d. with exponential distribution,
  \begin{equation}\label{est-18}
    \P^\e \left( CT \sup_{0\le k\le m-1} (\tau^\e_{k+1} - \tau^\e_k) < \eta \right) = \prod_{k=0}^{m-1} \P^\e \left( CT (\tau^\e_{k+1} - \tau^\e_k) < \eta \right) = \left( 1- e^{-\lambda(\e)\frac{\eta}{CT}}\right)^m.
  \end{equation}
  Combining \eqref{est-13}--\eqref{est-18}, we get
  \begin{equation}\label{est-12}
    \begin{split}
      \P^\e \left( \sup_{0\le t\le T} |\gamma^\e_t - \gamma_t| \ge \eta \right) \le&\ \ind_{\{CT^2\ge \eta\}} e^{-\lambda(\e) T} \sum_{m=0}^\infty \frac{\lambda(\e)^m (T-\frac{\eta}{CT})^m}{m!} \left( 1- e^{-\lambda(\e)\frac{\eta}{CT}}\right)^m \\
         &\ + e^{-\lambda(\e)T} \sum_{m=1}^\infty \frac{\lambda(\e)^m T^m}{m!} \left[ 1-\left( 1- e^{-\lambda(\e)\frac{\eta}{CT}}\right)^m \right] \\
         =&\ \ind_{\{CT^2\ge \eta\}} \exp\left\{ -\lambda(\e) \left( \frac{\eta}{CT} + Te^{-\lambda(\e)\frac{\eta}{CT}} - \frac{\eta}{CT} e^{-\lambda(\e)\frac{\eta}{CT}} \right)\right\} \\
         &\ + \left(1- e^{-\lambda(\e)T}- \exp\left\{ -\lambda(\e)T e^{-\lambda(\e)\frac{\eta}{CT}} \right\} \right) \\
         \le&\ \exp\left\{ -\lambda(\e) \left( \frac{\eta}{CT} + Te^{-\lambda(\e)\frac{\eta}{CT}} - \frac{\eta}{CT} e^{-\lambda(\e)\frac{\eta}{CT}} \right)\right\} \\
         &\ + \left(1- \exp\left\{ -\lambda(\e)T e^{-\lambda(\e)\frac{\eta}{CT}} \right\} \right).
    \end{split}
  \end{equation}

\begin{figure}
    \centering
    \subfloat[ ]{
    \begin{minipage}{195pt}
        \centering
        \label{55CP}
        \includegraphics[width=1\textwidth]{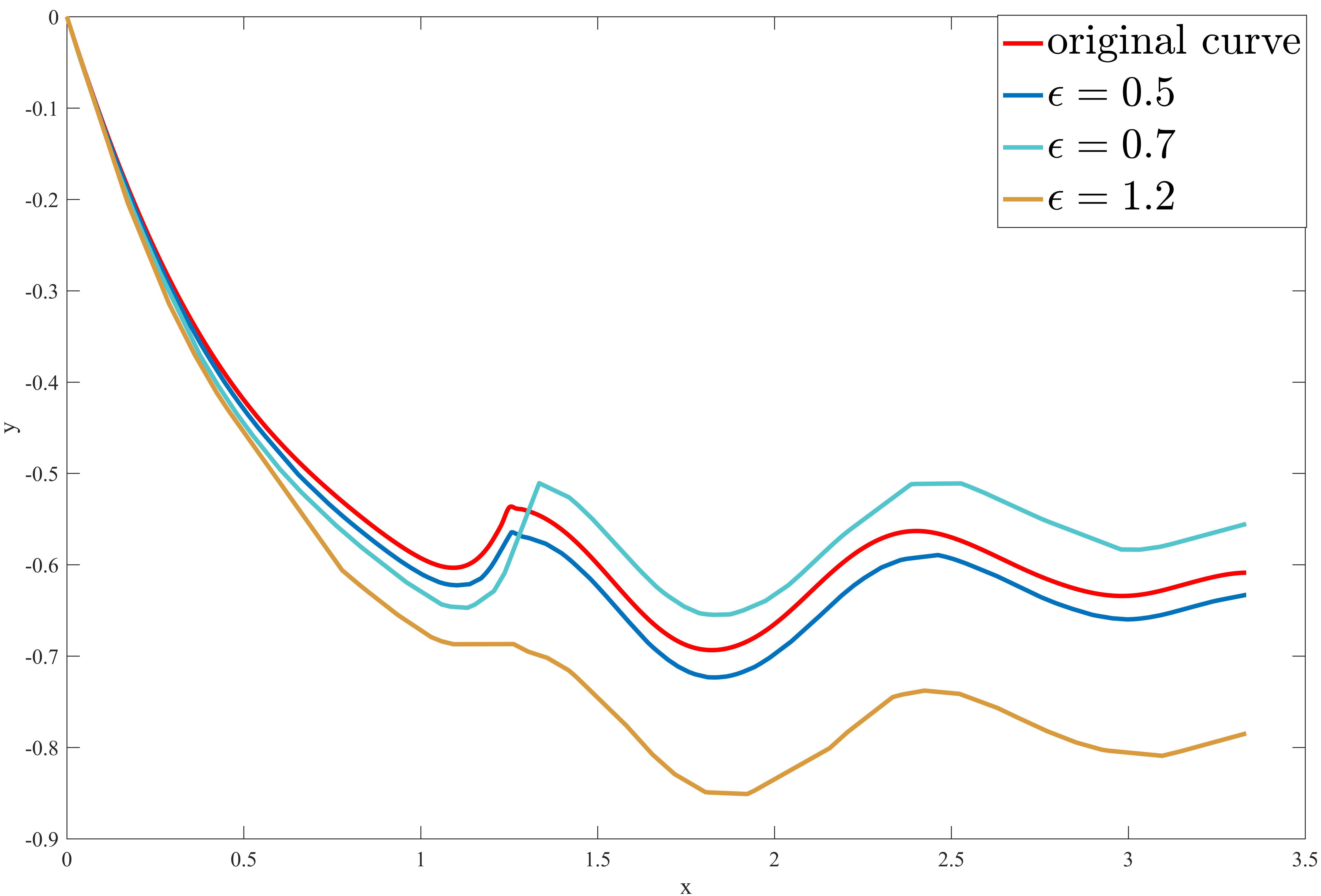}
    \end{minipage}
    }
    \subfloat[ ]{
    \begin{minipage}{145pt}
        \centering
        \label{55CS}
        \includegraphics[width=1\textwidth]{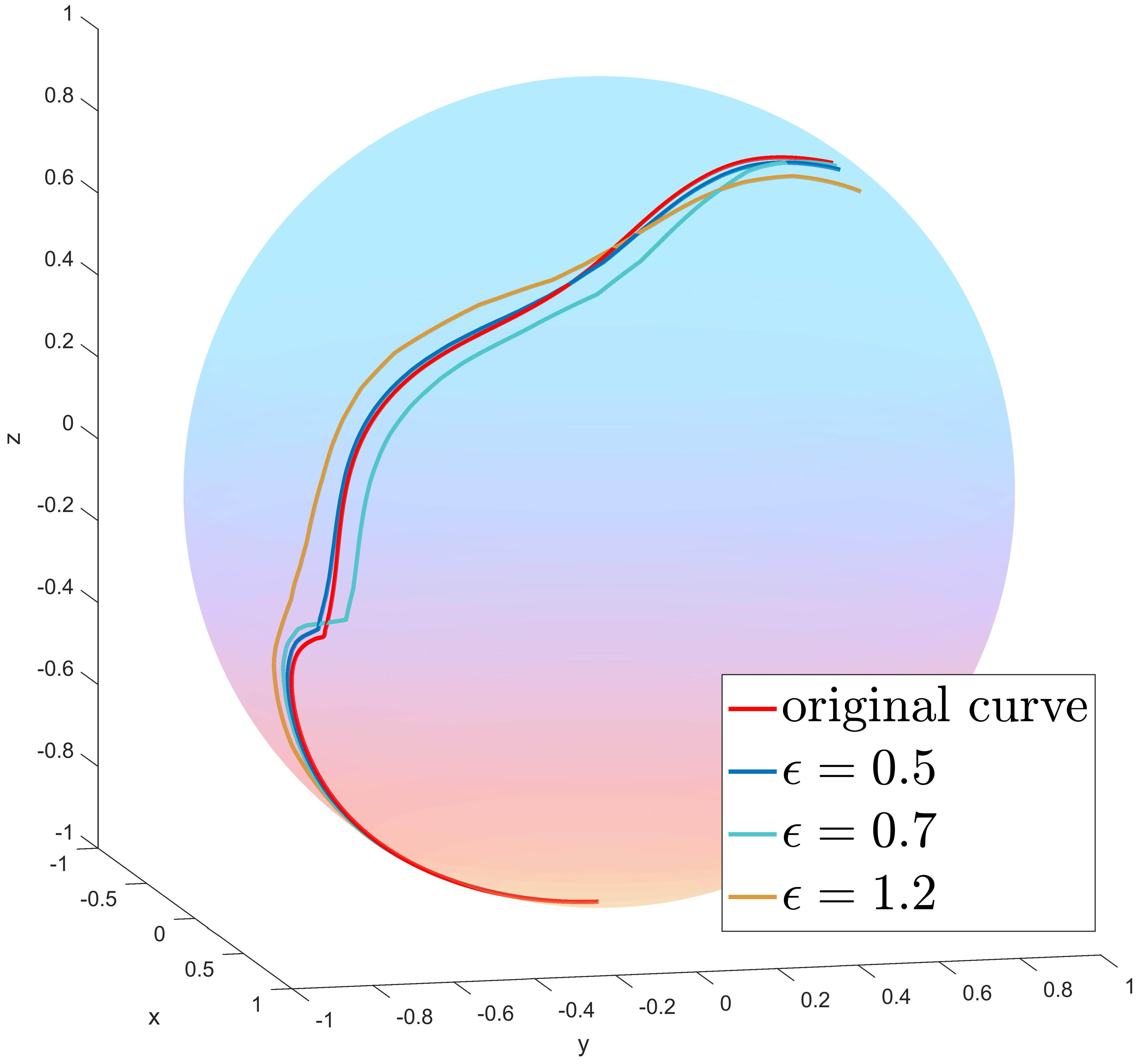}
    \end{minipage}
    }
\caption{The original and perturbed curves on the plane and their traces on the unit ball, in Example \ref{EX3}.} \label{55}
\end{figure}
  Now we assume that $\lim_{\e\to0} \e\lambda(\e) = \infty$, that is,
  \begin{equation}\label{assump}
    \lim_{\e\to0} \e\nu^\e((0,\infty)) = \infty.
  \end{equation}
  In particular, $\lim_{\e\to0} \lambda(\e) = \infty$. Then we take $\e\log$ on both sides of \eqref{est-12}. It is obvious that the $\e\log$ of the first term behind the last inequality of \eqref{est-12} goes to $-\infty$ as $\e\to0$. For the second term, we use Taylor's theorem to obtain that there exists a $(0,1)$-valued function $\theta(\e)$ such that
  \begin{equation*}
    \begin{split}
      \e\log \left( 1- \exp\left\{ -\lambda(\e)T e^{-\lambda(\e)\frac{\eta}{CT}} \right\} \right) & = \e\log \left( \lambda(\e)T e^{-\lambda(\e)\frac{\eta}{CT}} \exp\left\{ -\theta(\e)\lambda(\e)T e^{-\lambda(\e)\frac{\eta}{CT}} \right\} \right) \\
         & = \e\log \lambda(\e) + \e\log T  - \e\lambda(\e)\frac{\eta}{CT} -\e\theta(\e)\lambda(\e)T e^{-\lambda(\e)\frac{\eta}{CT}},
    \end{split}
  \end{equation*}
  which goes to $-\infty$ as $\e\to0$. Therefore,
  \begin{equation*}
    \limsup_{\e\to0} \e\log \P^\e\left( \sup_{0\le t\le T} |\gamma^\e_t - \gamma_t| \ge \eta \right) = -\infty.
  \end{equation*}
  By Lemma \ref{LDP-char}, the family $\{\gamma^\e\}_{\e>0}$ satisfies the LDP with the good rate function \eqref{LDP-exmp}.

  As a contrast with \eqref{exmp-nu}, we now take
  $$\nu^\epsilon(dx)=\exp\left[-x\exp\left(-1/\epsilon^{1.1}\right)\right]dx,$$
  which can be easily verified to satisfy the condition \eqref{assump}. In this case, the original and the piecewise linearly approximated curves on the plane and the unit ball are visualized in Fig.~\ref{55}.
  \qed

\end{example}

\section{Appendix: Proofs of Proposition \ref{LDP-control} and Corollary \ref{local-Lip}}\label{Appendix}

\renewcommand{\theequation}{A.\arabic{equation}}
\renewcommand{\thefigure}{A.\arabic{figure}}
\setcounter{equation}{0}
\setcounter{figure}{0}
\renewcommand{\thetheorem}{A.\arabic{theorem}}

This appendix provides the proofs for LDP of the controlled SDE \eqref{SDE-Ito} (Proposition \ref{LDP-control} and Corollary \ref{local-Lip}).

\subsection{Lemmas}

The following lemma is a small modification of \cite[Theorem 1.2, Lemma 2.5]{Gar08}, which will be useful in proving the LDP results.
\begin{lemma}\label{LDP-integral}
  Suppose the family $\{G(Y^\e)_t\}_{\e>0}$ is exponentially tight for each $t>0$.
  Let $\{F^\e\}_{\e>0}$ be a family of continuous $\{\F^\e_t\}_{t\ge0}$-adapted $\R^d\otimes (\R^n)^*$-valued processes. If the family $\{(Y^\e,U^\e,F^\e)\}_{\e>0}$ satisfies the LDP with a good rate function $I^\sharp$, then the family $\{(F^\e\cdot Y^\e, Y^\e,U^\e,F^\e)\}_{\e>0}$ also satisfies the LDP with the following good rate function:
  \begin{equation*}
    I(w,y,u,f) =
    \begin{cases}
      I^\sharp(y,u,f), & w = f\cdot y \text{ and } y \text{ is locally of finite variation}, \\
      \infty, & \text{otherwise}.
    \end{cases}
  \end{equation*}
\end{lemma}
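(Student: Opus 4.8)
The plan is to derive this result from the large deviation principle for $\{(Y^\e,U^\e,F^\e)\}$ by combining three ingredients: exponential tightness of the augmented family $\{(F^\e\cdot Y^\e, Y^\e,U^\e,F^\e)\}$, identification of all possible limit points via the rate function, and the fact that exponential tightness plus a uniquely identified rate function forces the full LDP (see \cite[Lemma 4.1.23]{DZ98} and \cite[Theorem (P)]{Puh91} cited earlier). First I would establish exponential tightness. Since $\{(Y^\e,U^\e,F^\e)\}$ already satisfies an LDP with a good rate function, that family is exponentially tight. So the only new piece is the component $F^\e\cdot Y^\e$. Writing $Y^\e = M^\e + A^\e$, split $F^\e\cdot Y^\e = F^\e\cdot A^\e + F^\e\cdot M^\e$. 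The finite-variation part is controlled pathwise by $\|F^\e\|_T\, |V(A^\e)|_T$, and since $|V(A^\e)|_t \le G(Y^\e)_t$ with $\{G(Y^\e)_t\}$ exponentially tight and $\{F^\e\}$ exponentially tight (hence $\|F^\e\|_T$ is, by Lemma \ref{exp-tight}), a product-of-exponentially-tight argument handles it; for the modulus of continuity one uses that $|(F^\e\cdot A^\e)(t) - (F^\e\cdot A^\e)(s)| \le \|F^\e\|_T\,(|V(A^\e)|_t - |V(A^\e)|_s)$. For the martingale part $N^\e := F^\e\cdot M^\e$, the quadratic variation satisfies $\langle N^\e,N^\e\rangle_t \le \|F^\e\|_t^2\,|\langle M^\e,M^\e\rangle|_t \le \e\,\|F^\e\|_t^2\, G(Y^\e)_t$, so one applies an exponential martingale (Bernstein-type) inequality on a set where $\|F^\e\|_T$ and $G(Y^\e)_T$ are bounded, to get both the uniform bound and the modulus estimate in Lemma \ref{exp-tight}; this is essentially \cite[Lemma 2.5]{Gar08}.

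Next, having exponential tightness, by \cite[Lemma 4.1.23]{DZ98} every subsequence has a further subsequence along which $\{(F^\e\cdot Y^\e, Y^\e,U^\e,F^\e)\}$ satisfies an LDP with some good rate function $\tilde I$. I would show $\tilde I$ must equal the claimed $I$, independently of the subsequence, which then yields the full LDP. The lower bound $\tilde I(w,y,u,f) \ge I^\sharp(y,u,f)$ (when $w = f\cdot y$, $y$ locally finite variation) and $=\infty$ otherwise follows from the contraction principle applied to the projection onto the last three coordinates, together with the observation that whenever $\e_k$-realizations of $(F^{\e_k}\cdot Y^{\e_k}, Y^{\e_k}, U^{\e_k}, F^{\e_k})$ are close to $(w,y,u,f)$ with $\tilde I(w,y,u,f)<\infty$, the martingale parts $M^{\e_k}$ must vanish in the limit (because $\frac1\e|\langle M^\e,M^\e\rangle|_t \le G(Y^\e)_t$ stays bounded on the relevant events, forcing $|\langle M^\e,M^\e\rangle|_t \to 0$, hence $M^\e \to 0$ locally uniformly by Doob's inequality), so the limiting $y$ is of locally finite variation and $w = \lim F^{\e_k}\cdot Y^{\e_k} = \lim F^{\e_k}\cdot A^{\e_k} = f\cdot y$ by a Riemann–Stieltjes continuity argument. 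For the reverse inequality $\tilde I(w,y,u,f) \le I^\sharp(y,u,f)$ on $\{w = f\cdot y,\ y\text{ loc.\ fin.\ var.}\}$, I would use the Laplace-principle / variational characterization: given a recovery sequence for $(y,u,f)$ under $I^\sharp$, the continuity of the map $(y,u,f)\mapsto (f\cdot y, y,u,f)$ at points where $y$ has locally finite variation (in the appropriate topology, using uniform control of $|V(A^\e)|$ on high-probability sets) transports it to a recovery sequence for $(w,y,u,f)$.

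The main obstacle I anticipate is the continuity of the bilinear map $(f,y)\mapsto f\cdot y$ in the passage to the limit: the integrator $y$ is only of locally finite variation, not of uniformly bounded variation, so $f\cdot y$ is genuinely discontinuous in the uniform topology on all of $\C^{d\times n}\times\C^n$, and one must work on the good sets $\{\,|V(A^\e)|_T \le R\,\}\cap\{\,G(Y^\e)_T\le R\,\}$ where, up to an exponentially negligible error, uniform-variation bounds hold; controlling the Riemann–Stieltjes sums and the martingale contribution simultaneously on these sets is the technical heart. This is exactly where the hypothesis on $G(Y^\e)$ does its work, and is the reason the statement restricts attention to limits that are locally of finite variation. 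The rest — verifying $I$ is a good rate function (its level sets are closed and compact because $I^\sharp$ is good and the constraint set $\{w = f\cdot y\}$ is closed on the relevant compact level sets) — is routine.
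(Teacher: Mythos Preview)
Your proposal is essentially correct in outline, but it takes a substantially longer route than the paper. The paper's proof is two lines: first, the hypothesis on $\{G(Y^\e)_t\}$ is exactly what \cite[Lemma 2.5]{Gar08} needs to conclude that $\{Y^\e\}$ satisfies Garcia's \emph{uniform exponential tightness} condition \cite[Definition 1.1]{Gar08}; second, one rewrites $F^\e\cdot Y^\e = (U^\e,F^\e)\cdot(0,Y^\e)^T$ so that the extra component $U^\e$ is carried along as an integrand against a trivial integrator, and then \cite[Theorem 1.2]{Gar08} gives the LDP for the augmented family with the stated rate function directly. No exponential-tightness bookkeeping, subsequence extraction, or rate-function identification is done by hand.

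What you have sketched is, in effect, a reconstruction of the proof of Garcia's Theorem 1.2 in this setting. The ingredients you list---splitting $F^\e\cdot Y^\e$ into finite-variation and martingale parts, the Bernstein-type control on $F^\e\cdot M^\e$ via $\langle N^\e,N^\e\rangle_t \le \e\|F^\e\|_t^2 G(Y^\e)_t$, the observation that $|\langle M^\e,M^\e\rangle|_t \le \e\, G(Y^\e)_t \to 0$ forces limits to be of locally finite variation, and the Riemann--Stieltjes continuity on variation-bounded sets---are indeed the right ones, and are what underlies Garcia's result. Your honest flagging of the discontinuity of $(f,y)\mapsto f\cdot y$ as the main obstacle is on target. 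The payoff of your route is that it is self-contained; the cost is that the ``recovery sequence'' step for the upper bound and the passage from near-realizations to rate-function identities are only sketched and would need care to make rigorous (LDP gives probabilities of neighborhoods, not pathwise closeness, so one must argue via the variational characterization or an approximation scheme as in \cite{Gar08}). The paper simply outsources all of this to the cited reference.
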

\begin{proof}
   Firstly, we assert from \cite[Lemma 2.5]{Gar08} that the family $\{Y^\e\}_{\e>0}$ satisfies the so called ``uniform exponential tightness'' condition proposed in \cite[Definition 1.1]{Gar08}. Next, we observe that $F^\e\cdot Y^\e = (U^\e,F^\e) \cdot (0,Y^\e)^T$. The result is a corollary of \cite[Theorem 1.2]{Gar08}.
\end{proof}

The following Gronwall-type inequality is adapted from \cite[Lemma IX.6.3]{JS13}. The proof is almost the same and shall be omitted.
\begin{lemma}\label{Gronwall}
  Let $A$ be a nondecreasing continuous process, $H$ be a nonnegative continuous process, such that $\E( (H\cdot A)_\infty ) < \infty$ and $A_\infty \le K$ identically for some constant $K$.
  Suppose that for each stopping time $T$ we have
  \begin{equation*}
    \E(H_T) \le \alpha + \E( (H\cdot A)_T ),
  \end{equation*}
  for some constant $\alpha$. Then $\E(H_\infty) \le \alpha e^{Kt}$.
\end{lemma}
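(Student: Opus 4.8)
The plan is to reach a deterministic Gronwall inequality through a time-change by the continuous nondecreasing process $A$. For $v\ge0$ set $\beta(v):=\inf\{s\ge0:A_s\ge v\}$, the first hitting time of $[v,\infty)$ by $A$; by continuity and adaptedness each $\beta(v)$ is a stopping time, one has $A_{\beta(v)}=v$ on $\{v\le A_\infty\}$, and $\beta(v)=\infty$ on $\{v>A_\infty\}$. Define
\[
  g(t):=\E\Big(\int_0^\infty H_s\,\ind_{\{A_s\le t\}}\,dA_s\Big),\qquad t\ge0 .
\]
Because $A_\infty\le K$ we have $\ind_{\{A_s\le t\}}\equiv1$ whenever $t\ge K$, so $g(t)=\E((H\cdot A)_\infty)<\infty$ for all $t\ge K$; in particular $g$ is bounded, and $g(0)=0$ since $A_0=0$.

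First I would show that $g$ is absolutely continuous with an explicit density. The change of variables $v=A_s$ in the Stieltjes integral $\int_0^\infty H_s\ind_{\{A_s\le t\}}\,dA_s$ is legitimate for a continuous nondecreasing integrator --- the flat stretches of $A$ carry no $dA$-mass and are Lebesgue-null in the variable $v$, and $A$ has no jumps, so no correction terms arise --- and gives $\int_0^\infty H_s\ind_{\{A_s\le t\}}\,dA_s=\int_0^{A_\infty\wedge t}H_{\beta(v)}\,dv$. Taking expectations and using Tonelli,
\[
  g(t)=\int_0^t\E\big(H_{\beta(v)}\,\ind_{\{v\le A_\infty\}}\big)\,dv .
\]
Next I would bound the density: applying the hypothesis with the stopping time $T=\beta(v)$, and observing that $A_s\le v$ on $[0,\beta(v)]$ so that $(H\cdot A)_{\beta(v)}\le\int_0^\infty H_s\ind_{\{A_s\le v\}}\,dA_s$, one gets $\E(H_{\beta(v)})\le\alpha+g(v)$, hence also $\E\big(H_{\beta(v)}\ind_{\{v\le A_\infty\}}\big)\le\alpha+g(v)$. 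Therefore $g(t)\le\int_0^t(\alpha+g(v))\,dv$ for every $t$, and the integral form of Gronwall's lemma, applied to $\alpha+g$, yields $g(t)\le\alpha(e^t-1)$. In particular $\E((H\cdot A)_\infty)=g(K)\le\alpha(e^K-1)$; a final use of the hypothesis with $T\equiv\infty$ then gives $\E(H_\infty)\le\alpha+\E((H\cdot A)_\infty)\le\alpha e^K$, which is the claimed bound (the right-hand side in the statement being $\alpha e^K$).

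The delicate step is the time-change identity $\int_0^\infty H_s\ind_{\{A_s\le t\}}\,dA_s=\int_0^{A_\infty\wedge t}H_{\beta(v)}\,dv$: I would verify that $\beta$ is genuinely stopping-time valued, that $A_{\beta(v)}=v$ below the level $A_\infty$, and that the (possibly uncountably many) intervals of constancy of $A$ --- the only obstruction to inverting $A$ path by path --- are null both for $dA_s$ and for $dv$, so that the two sides coincide pathwise; the joint measurability of $(\omega,v)\mapsto H_{\beta(v)}(\omega)$ then justifies Tonelli and the use of the hypothesis inside the $v$-integral. Once these points are in place the remaining steps are routine, which is why the argument is essentially that of \cite[Lemma IX.6.3]{JS13} and the paper omits it.
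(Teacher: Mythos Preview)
Your argument is correct and is precisely the time-change approach of \cite[Lemma IX.6.3]{JS13} that the paper defers to: pass to the generalized inverse $\beta(v)$ of $A$, reduce the stochastic inequality to the deterministic integral inequality $g(t)\le\int_0^t(\alpha+g(v))\,dv$, and conclude with the scalar Gronwall lemma. The minor caveats you flag (adaptedness of $A$ so that $\beta(v)$ is a stopping time, $A_0=0$, and the right-hand side of the statement being $\alpha e^{K}$ rather than $\alpha e^{Kt}$) are exactly the implicit conventions inherited from the cited reference.
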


\begin{lemma}\label{est-8}
  Suppose the family of increasing processes $\{G(Y^\e)\}_{\e>0}$ is uniformly bounded by a constant $K>0$. Let $F^\e$ be an $\{\F^\e_t\}$-adapted $\R^d\otimes (\R^n)^*$-valued processes, $z_0$ be a constant vector in $\R^d$, and let
  \begin{equation*}
    dZ^\e_t = F^\e_{t} dY^\e_t, \quad Z^\e_0 = z_0.
  \end{equation*}
  Let $T^\e$ be an $\{\F^\e_t\}$-stopping time. Suppose there exist positive constants $C$ and $\rho$, such that for any $t\in [0,T^\e]$,
  \begin{equation}\label{bound}
    |F^\e_t| \le C(\rho^2 + |Z^\e_t|^2)^{1/2}.
  \end{equation}
  Then for any $a>0$ and $0<\e \le 1$,
  \begin{equation*}
    \e \log \P^\e\left( \sup_{t\in[0,T^\e]} |Z^\e_t|\ge a \right) \le K'+ \log\left( \frac{\rho^2 + |z_0^2|}{\rho^2+a^2}\right),
  \end{equation*}
  where $K' = \left(2C+(2+n)C^2\right) K$.
\end{lemma}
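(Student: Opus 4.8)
The plan is to construct an explicit nonnegative supermartingale out of $Z^\e$ and then combine the optional stopping theorem with Markov's inequality. We may assume $p:=1/\e\ge1$. Writing $Y^\e=M^\e+A^\e$ for the canonical decomposition, set $f(z):=(\rho^2+|z|^2)^{p}$ and consider
\[
  N^\e_t:=f\bigl(Z^\e_{t\wedge T^\e}\bigr)\,\exp\bigl(-c\,G(Y^\e)_{t\wedge T^\e}\bigr),\qquad c:=\tfrac1\e\bigl(2C+(2+n)C^2\bigr).
\]
The first step is to apply It\^o's formula to $f(Z^\e_{t\wedge T^\e})$ and split $df(Z^\e)$ into a local martingale driven by $dM^\e$ and a drift coming from the first-order term $\partial_j f\,dA^{\e,j}$ and from the Hessian $\partial_j\partial_k f$ contracted against $d\langle M^\e,M^\e\rangle$.

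The heart of the argument is the drift estimate. Using $\partial_j f=2p\,z_j(\rho^2+|z|^2)^{p-1}$ and $\partial_j\partial_k f=2p\,\delta_{jk}(\rho^2+|z|^2)^{p-1}+4p(p-1)z_jz_k(\rho^2+|z|^2)^{p-2}$, together with the hypothesis $|F^\e_t|\le C(\rho^2+|Z^\e_t|^2)^{1/2}$ on $[0,T^\e]$ and the trivial bound $|Z^\e_t|\le(\rho^2+|Z^\e_t|^2)^{1/2}$, one checks that every drift term is dominated by a constant times $f(Z^\e_t)$ multiplied by either $d|V(A^\e)|_t$ or $d|\langle M^\e,M^\e\rangle|_t$. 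Collecting the contributions — the $A^\e$-term producing the coefficient $2pC$, and the two Hessian terms producing something of order $npC^2$ and $2p(p-1)C^2$ — and comparing against $c\,f(Z^\e_t)\,dG(Y^\e)_t=c\,f(Z^\e_t)\,d|V(A^\e)|_t+\tfrac c\e f(Z^\e_t)\,d|\langle M^\e,M^\e\rangle|_t$, one verifies (this is where $p\ge1$ enters) that the total drift of $f(Z^\e_t)$ is at most $c\,f(Z^\e_t)\,dG(Y^\e)_t$ on $[0,T^\e]$. Hence $N^\e$ is a nonnegative local supermartingale, and therefore a genuine supermartingale.

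Now let $\tau:=\inf\{t\ge0:|Z^\e_t|\ge a\}\wedge T^\e$. Applying the optional stopping theorem to the nonnegative supermartingale $N^\e$ at $\tau\wedge m$ and letting $m\to\infty$ by Fatou gives $\E^\e\bigl[f(Z^\e_\tau)\exp(-cG(Y^\e)_\tau)\bigr]\le N^\e_0=f(z_0)=(\rho^2+|z_0|^2)^{p}$, and since $G(Y^\e)_\tau\le G(Y^\e)_{T^\e}\le K$ we get $\E^\e[f(Z^\e_\tau)]\le e^{cK}(\rho^2+|z_0|^2)^{p}$. On the event $\{\sup_{t\in[0,T^\e]}|Z^\e_t|\ge a\}$ continuity of $Z^\e$ forces $|Z^\e_\tau|\ge a$, hence $f(Z^\e_\tau)\ge(\rho^2+a^2)^{p}$, and Markov's inequality yields
\[
  \P^\e\Bigl(\sup_{t\in[0,T^\e]}|Z^\e_t|\ge a\Bigr)\le\frac{\E^\e[f(Z^\e_\tau)]}{(\rho^2+a^2)^{p}}\le e^{cK}\Bigl(\frac{\rho^2+|z_0|^2}{\rho^2+a^2}\Bigr)^{p}.
\]
Taking $\e\log$ and recalling $p=1/\e$ and $\e cK=\bigl(2C+(2+n)C^2\bigr)K=K'$ produces the claimed bound.

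The main obstacle is the drift estimate in the second paragraph: one must confirm that the a priori dangerous $p(p-1)$-order Hessian term, which scales like $\e^{-2}$, is, after invoking $|F^\e_t|\le C(\rho^2+|Z^\e_t|^2)^{1/2}$, absorbed into the $\tfrac1\e|\langle M^\e,M^\e\rangle|$ part of $G(Y^\e)_t$ — so that it only scales like $\e^{-1}$ — and that the resulting numerical constant does not exceed $\tfrac1\e(2C+(2+n)C^2)$. The factor $n$ enters through the norm comparison implicit in the definition of the variation process $|\langle M^\e,M^\e\rangle|$ of the matrix-valued quadratic variation.
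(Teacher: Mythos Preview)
Your proof is correct and follows essentially the same approach as the paper: the same Lyapunov function $(\rho^2+|z|^2)^{1/\e}$, the same drift bounds on its It\^o expansion, the same localization by the hitting time of the level $a$, and the same final Chebyshev/Markov step. The only minor difference is that you package the drift bound into an exponential-compensator supermartingale $f(Z^\e)\exp(-cG(Y^\e))$ and invoke optional stopping, whereas the paper takes expectations at a generic stopping time and applies a Gronwall-type lemma to reach the identical inequality $\E^\e\bigl[(\rho^2+|Z^\e_\tau|^2)^{1/\e}\bigr]\le e^{K'/\e}(\rho^2+|z_0|^2)^{1/\e}$.
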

\begin{proof}
  Define $\phi^\e(z) = (\rho^2 + |z|^2)^{1/\e}$ for each $\e>0$. Then
  \begin{equation}\label{est-7}
    \partial_i \phi^\e(z) = \frac{2\phi^\e(z)}{\e(\rho^2+|z|^2)}z_i, \quad \partial_i\partial_j \phi^\e(z) = \frac{2\phi^\e(z)}{\e(\rho^2+|z|^2)} \left( \delta_{ij} + 2\left( \frac{1}{\e}-1 \right) \frac{z_i z_j}{\rho^2+|z|^2}\right).
  \end{equation}
  Let $\Phi^\e_t := \phi^\e(Z^\e_t)$. By It\^o's formula (see, e.g., \cite[Theorem I.4.57]{JS13}),
  \begin{equation*}
    \Phi^\e = \phi^\e(z_0) + \partial_i\phi^\e(Z^\e)F_j^{\e,i} \cdot A^{\e,j} + \partial_i\phi^\e(Z^\e)F^{\e,i}_{j} \cdot M^{\e,j} + \frac{1}{2} \partial_i\partial_j\phi^\e(Z^\e) F_k^{\e,i} F_h^{\e,j} \cdot \langle M^{\e,k}, M^{\e,h} \rangle
  \end{equation*}
  Define a stopping time $T^{\e,a} = \inf\{t\ge0: |Z^\e_t|\ge a\} \wedge T^\e$. Using the bound \eqref{bound}, for each $\e>0$,
  \begin{equation*}
    \E^\e \left( \int_0^{T^{\e,a}} |D\phi^\e(Z^\e)F^\e|^2 d|\langle M^\e, M^\e \rangle|_t\right) < \infty.
  \end{equation*}
  Then the stochastic integrals $\partial_i\phi^\e(Z^\e)F^{\e,i}_{j} \cdot M^{\e,j}$ is martingale up to $T^{\e,a}$. The optional sampling theorem (see, e.g., \cite[Theorem 1.3.22]{KS91}) yields for any $\{\F^\e_t\}$-stopping time $S^\e$ that
  \begin{equation*}
    \begin{split}
       \E^\e\left( \Phi^\e_{S^\e \wedge T^{\e,a}} \right) =&\ \phi^\e(z_0) + \E^\e\left( \int_0^{S^\e \wedge T^{\e,a}} \partial_i\phi^\e(Z^\e_s)F_{j,s}^{\e,i} dA^{\e,j}_s \right) \\
       &\ + \frac{1}{2} \E^\e\left( \int_0^{S^\e \wedge T^{\e,a}} \partial_i\partial_j\phi^\e(Z^\e_s) F_{k,s}^{\e,i} F_{h,s}^{\e,j} d \langle M^{\e,k}, M^{\e,h} \rangle_s \right) \\
      =:&\ \phi^\e(z_0) + I^\e_1 +I^\e_2.
    \end{split}
  \end{equation*}
  By \eqref{bound} and \eqref{est-7}, it is easy to get
  \begin{equation*}
    |I^\e_1| \le \frac{2C}{\e} \E^\e \int_0^{S^\e \wedge T^{\e,a}} \Phi^\e_s d|V(A^\e)|_s, \quad |I^\e_2| \le \frac{(2+n)C^2}{\e^2} \E^\e \int_0^{S^\e \wedge T^{\e,a}} \Phi^\e_s d|\langle M^\e, M^\e \rangle|_s.
  \end{equation*}
  Then
  \begin{equation*}
    \E^\e\left( \Phi^\e_{S^\e \wedge T^{\e,a}} \right) \le \phi^\e(z_0) + \left(2C+(2+n)C^2\right) \frac{1}{\e} \E \int_0^{S^\e} \Phi^\e_{s \wedge T^{\e,a}} dG^\e_s.
  \end{equation*}
  By virtue of the Gronwall-type inequality in Lemma \ref{Gronwall}, we have
  \begin{equation*}
    \E^\e(\Phi^\e_{T^{\e,a}}) \le \phi^\e(z_0)e^{K'/\e}.
  \end{equation*}
  Then by the definition of $T^{\e,a}$ and Chebyshev's inequality,
  \begin{equation*}
    \begin{split}
       \e \log \P^\e\left(\sup_{0\le t\le T^\e} |Z^\e_t|\ge a \right) & = \e \log \P^\e(|Z^\e_{T^{\e,a}}| \ge a) = \e \log \P^\e(\Phi^\e_{T^{\e,a}} \ge \phi(a)) \\
         & \le K' + \e \log \phi^\e(z_0) - \e \log \phi^\e(a) = K' +\log\left( \frac{\rho^2 + |z_0^2|}{\rho^2+a^2}\right).
    \end{split}
  \end{equation*}
  The proof is completed.
\end{proof}

We introduce the supremum norm and Lipschitz norm of a function $F$ on $\R^d\times\R^l$ by
\begin{align*}
  \|F\|_0 &:= \sup_{(x,u)\in\R^d\times\R^l} |F(x,u)|, \\
  \|F\|_\Lip &:= \|F\|_0 + \sup_{\begin{subarray}{c} (x_1,u_1),(x_2,u_2)\in\R^d\times\R^l \\ x_1\ne x_2,u_1\ne u_2 \end{subarray}} \frac{|F(x_1,u_1) - F(x_2,u_2)|}{|x_1-x_2|+|u_1-u_2|},
\end{align*}
where the notation of absolute value means the matrix norm or vector norm.

For each $t>0$ and $m\in\N_+$, we denote $[t]_m:=\lfloor t\rfloor + \frac{\lfloor m(t-\lfloor t\rfloor)\rfloor}{m}$. Obviously we have $|t-[t]_m| < \frac{1}{m}$. For each $m\in\N_+$, define $X^{\e,m}$ to be the solution of the following SDE:
\begin{equation}\label{SDE-app}
  dX^{\e,m}_t = F(X^{\e,m}_{[t]_m}, U^{\e}_{[t]_m}) dY^\e_t, \quad X^{\e,m}_0 = x_0.
\end{equation}
The following lemma shows that $X^{\e,m}$ are exponentially good approximations of $X^\e$.
\begin{lemma}\label{exp-good-app}
  Suppose that the family of increasing processes $\{G(Y^\e)\}_{\e>0}$ is uniformly bounded by a constant $K>0$, and that the family $\{(Y^\e,U^\e)\}_{\e>0}$ is  exponentially tight. Assume further that the function $F$ to be bounded and global Lipschitz. Then for each $T>0$ and $a>0$,
  \begin{equation*}
    \lim_{m\to\infty} \limsup_{\e\to0} \e\log \P^\e(\|X^{\e,m} - X^\e\|_T >a) = -\infty.
  \end{equation*}
\end{lemma}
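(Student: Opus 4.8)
The plan is to estimate the difference $D^\e_t := X^{\e,m}_t - X^\e_t$ by writing it as a stochastic integral against $Y^\e$ and applying the exponential Gronwall estimate of Lemma \ref{est-8} on an event where the relevant processes stay bounded. First I would localize: for $R>0$ introduce the stopping time
\begin{equation*}
  S^{\e,R} := \inf\{t\ge0 : |X^\e_t|\ge R \text{ or } |U^\e_t| \ge R\} \wedge T,
\end{equation*}
so that on $[0,S^{\e,R}]$ both the state and the control are bounded by $R$, and hence $X^{\e,m}$ is also bounded by a constant depending on $R$, $K$, $\|F\|_0$ (since $G(Y^\e)$ is uniformly bounded by $K$ and $F$ is bounded). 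Exponential tightness of $\{(Y^\e,U^\e)\}$ together with Lemma \ref{exp-tight} gives $\lim_{R\to\infty}\limsup_{\e\to0}\e\log\P^\e(|U^\e|_T \ge R) = -\infty$, and a direct application of Lemma \ref{est-8} (with $F^\e_t = F(X^\e_t,U^\e_t)$, which satisfies the linear bound \eqref{bound} since $F$ is bounded) gives the analogous statement for $\sup_{[0,T]}|X^\e_t|$. So it suffices to prove the conclusion with $\P^\e(\|X^{\e,m}-X^\e\|_T > a)$ replaced by $\P^\e(\|X^{\e,m}-X^\e\|_{S^{\e,R}} > a)$ for each fixed $R$.

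On $[0,S^{\e,R}]$ we have
\begin{equation*}
  dD^\e_t = \big(F(X^{\e,m}_{[t]_m},U^\e_{[t]_m}) - F(X^\e_t,U^\e_t)\big)\,dY^\e_t, \qquad D^\e_0 = 0.
\end{equation*}
The integrand splits as
\begin{equation*}
  F(X^{\e,m}_{[t]_m},U^\e_{[t]_m}) - F(X^\e_{[t]_m},U^\e_{[t]_m}) \;+\; F(X^\e_{[t]_m},U^\e_{[t]_m}) - F(X^\e_t,U^\e_t),
\end{equation*}
and by the global Lipschitz bound the first term is controlled by $\|F\|_{\Lip}(|D^\e_{[t]_m}| + \|X^{\e,m}-X^\e\|_{[t]_m})\le 2\|F\|_{\Lip}\|D^\e\|_{t}$, a multiple of the running supremum of $|D^\e|$, i.e. a bound of the form \eqref{bound} with $\rho$ and $|z_0|$ replaced by an \emph{oscillation error} term. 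The second term is an ``oscillation of the coefficient along the frozen grid'' term: it is bounded by $\|F\|_{\Lip}\big(|X^\e_{[t]_m}-X^\e_t| + |U^\e_{[t]_m}-U^\e_t|\big)$, which on $[0,S^{\e,R}]$ is at most $\|F\|_{\Lip}\big(w_T(X^\e,1/m) + w_T(U^\e,1/m)\big)$ by the definition of $[t]_m$. Writing $\rho_m^\e := \|F\|_{\Lip}\big(w_T(X^\e,1/m)+w_T(U^\e,1/m)\big)$, the integrand is bounded by $2\|F\|_{\Lip}(\rho_m^\e + \|D^\e\|_t) \le C'(\rho_m^\e{}^2 + \|D^\e\|_t{}^2)^{1/2}$ on $[0,S^{\e,R}]$, so Lemma \ref{est-8} (applied on the stopping time $S^{\e,R}$, with $z_0 = 0$) gives, for $0<\e\le 1$,
\begin{equation*}
  \e\log\P^\e\Big(\|D^\e\|_{S^{\e,R}} \ge a,\ \rho_m^\e \le \delta\Big) \le K'' + \log\frac{\delta^2}{\delta^2 + a^2}\le K'' + 2\log(\delta/a),
\end{equation*}
for any $\delta>0$, where $K''$ depends only on $\|F\|_{\Lip}$, $n$ and $K$ — crucially \emph{not} on $m$ or on $R$. (To make the application of Lemma \ref{est-8} clean one can run it on the event $\{\rho_m^\e\le\delta\}$ by stopping at the first time $\rho_m^\e$ would exceed $\delta$; since $\rho_m^\e$ is $\F^\e_T$-measurable rather than adapted in $t$, it is simplest to instead note that the whole argument of Lemma \ref{est-8} goes through verbatim with the deterministic constant $\rho$ replaced by the $\F^\e_T$-measurable random variable $\rho_m^\e$, because $\rho_m^\e$ enters only through the final Chebyshev step.)

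Finally I would remove the auxiliary event $\{\rho_m^\e\le\delta\}$: since $w_T(X^\e,1/m)$ and $w_T(U^\e,1/m)$ both tend to $0$ as $m\to\infty$ uniformly in the sense of exponential tightness, I use Lemma \ref{exp-tight}.(ii) for $\{U^\e\}$ and, for $X^\e$, a short direct argument — on $\{S^{\e,R}=T\}$ the increment $X^\e_t - X^\e_s = \int_s^t F(X^\e_r,U^\e_r)\,dY^\e_r$ has variation dominated via $G(Y^\e)$, so its modulus of continuity inherits exponential negligibility from that of $G(Y^\e)$; alternatively apply Lemma \ref{est-8} to the shifted process. Either way, for fixed $R$ and $\delta$,
\begin{equation*}
  \lim_{m\to\infty}\limsup_{\e\to0}\e\log\P^\e(\rho_m^\e > \delta,\ S^{\e,R}=T) = -\infty.
\end{equation*}
Combining the three estimates through the elementary bound $\e\log(p+q+r)\le \e\log 3 + \max(\e\log p,\e\log q,\e\log r)$, taking $\limsup_{\e\to0}$, then $m\to\infty$, then $\delta\to 0$, and finally $R\to\infty$, yields the claim. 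The main obstacle is the bookkeeping around Lemma \ref{est-8}: one must check that its constant $K'$ genuinely does not depend on the grid parameter $m$ (it does not, since it depends only on the Lipschitz constant $C=2\|F\|_{\Lip}$ and on $K$), and one must handle the mild non-adaptedness of the oscillation modulus $\rho_m^\e$, which is resolved by observing it appears only in the terminal Chebyshev inequality of that lemma's proof and so may be taken random without changing anything else.
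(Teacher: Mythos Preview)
Your overall architecture (write the difference as an integral, localize by a stopping time to make the oscillation part a deterministic $\rho$, apply Lemma \ref{est-8}, then remove the stopping time via exponential tightness) is the same as the paper's. The genuine gap is in \emph{which} intermediate point you split through. You insert $(X^\e_{[t]_m},U^\e_{[t]_m})$; the paper inserts $(X^{\e,m}_t,U^\e_t)$. With the paper's choice,
\[
  |F(X^{\e,m}_{[t]_m},U^\e_{[t]_m}) - F(X^{\e,m}_t,U^\e_t)| \le \|F\|_{\Lip}\big(|X^{\e,m}_t - X^{\e,m}_{[t]_m}| + |U^\e_t - U^\e_{[t]_m}|\big),
\]
and the key point is that $X^{\e,m}_t - X^{\e,m}_{[t]_m} = F(X^{\e,m}_{[t]_m},U^\e_{[t]_m})(Y^\e_t - Y^\e_{[t]_m})$ explicitly, so this oscillation is bounded by $\|F\|_0\, w_T(Y^\e,1/m)$, which is controlled directly by the exponential tightness of $(Y^\e,U^\e)$. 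The remaining piece is $|F(X^{\e,m}_t,U^\e_t) - F(X^\e_t,U^\e_t)| \le \|F\|_{\Lip}|Z^\e_t|$, which is precisely the form required by \eqref{bound} in Lemma \ref{est-8}. No localization by $S^{\e,R}$ is needed, since $F$ is already bounded.

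Your split creates two problems that do not go away cheaply. First, the Lipschitz piece produces $|D^\e_{[t]_m}|$ (or, as you write, $\|D^\e\|_t$), not $|D^\e_t|$; Lemma \ref{est-8} genuinely needs the \emph{current} value $|Z^\e_t|$ because the cancellation $|\partial_i\phi^\e(Z^\e_t)F^\e_t| \le \tfrac{2C}{\e}\phi^\e(Z^\e_t)$ in its proof uses $|F^\e_t|\le C(\rho^2+|Z^\e_t|^2)^{1/2}$ pointwise. Second, your oscillation term requires control on $w_T(X^\e,1/m)$; your suggestions (``variation dominated via $G(Y^\e)$'' or ``apply Lemma \ref{est-8} to the shifted process'') do not give $\lim_{m\to\infty}\limsup_{\e\to0}\e\log\P^\e(w_T(X^\e,1/m)>\delta)=-\infty$, because the bound $G(Y^\e)\le K$ says nothing about the modulus of continuity of $|V(A^\e)|$ or $|\langle M^\e,M^\e\rangle|$, and Lemma \ref{est-8} applied on an interval of length $1/m$ still carries the full $K'$. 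In fact, exponential control of $w_T(X^\e,\cdot)$ is exactly what Corollary \ref{exp-tight-X} delivers \emph{using} this lemma, so invoking it here is circular. Finally, your claim that ``$\rho_m^\e$ enters only through the final Chebyshev step'' is incorrect: $\rho$ sits in the definition $\phi^\e(z)=(\rho^2+|z|^2)^{1/\e}$, so a non-adapted $\rho_m^\e$ breaks the It\^o/optional-sampling argument; the paper handles this by the adapted stopping time $T^{\e,m,\rho}=\inf\{t:|X^{\e,m}_t-X^{\e,m}_{[t]_m}|+|U^\e_t-U^\e_{[t]_m}|\ge\rho\}$ and a deterministic $\rho$.
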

\begin{proof}
  Fix $m\in\N_+$. Let $Z^\e := X^{\e,m} - X^\e$. Then
  \begin{equation*}
    dZ^\e_t = \left(F(X^{\e,m}_{[t]_m}, U^{\e}_{[t]_m}) - F(X^\e_t,U^\e_t)\right) dY^\e_t, \quad Z^\e_0 = 0.
  \end{equation*}
  For $\rho>0$, we define an $\{\F^\e_t\}$-stopping time
  \begin{equation*}
    T^{\e,m,\rho} := \inf\{t\ge0: | X^{\e,m}_t - X^{\e,m}_{[t]_m} | + | U^{\e}_t - U^{\e}_{[t]_m} | \ge \rho \}.
  \end{equation*}
  Then for $t\in[0,T^{\e,m,\rho}]$, we have
  \begin{equation*}
    \begin{split}
       | F(X^{\e,m}_{[t]_m}, U^{\e}_{[t]_m}) - F(X^\e_t,U^\e_t)| & \le | F(X^{\e,m}_{[t]_m}, U^{\e}_{[t]_m}) - F(X^{\e,m}_t,U^\e_t)| + | F(X^{\e,m}_t,U^\e_t) - F(X^\e_t,U^\e_t)| \\
         & \le \|F\|_\Lip(\rho+ |X^{\e,m}_t-X^\e_t|).
    \end{split}
  \end{equation*}
  Together with Lemma \ref{est-8}, it yields that for any $a>0$ and $0<\e \le 1$,
  \begin{equation*}
    \e\log\P^\e \left( \sup_{t\in[0,T^{\e,m,\rho}]} |X^{\e,m}_t - X^\e_t| \ge a\right) \le K'+ \log\left( \frac{\rho^2}{\rho^2+a^2}\right).
  \end{equation*}
  Hence,
  \begin{equation}\label{est-1}
    \lim_{\rho\to 0}\sup_{m\in\N_+}\limsup_{\e\to0}\e\log\P^\e \left( \sup_{t\in[0,T^{\e,m,\rho}]} |X^{\e,m}_t - X^\e_t| \ge a\right) = -\infty.
  \end{equation}
  On the other hand, we note that
  \begin{equation*}
    | X^{\e,m}_t - X^{\e,m}_{[t]_m} | = |F(X^{\e,m}_{[t]_m}, U^{\e}_{[t]_m})(Y^\e_t - Y^\e_{[t]_m})| \le \|F\|_0 w_T(Y^\e,1/m).
  \end{equation*}
  Thus we have
  \begin{equation*}
    \begin{split}
      \P^\e\{T^{\e,m,\rho} < T\} &= \P^\e\left(\sup_{0\le t\le T} \left( | X^{\e,m}_t - X^{\e,m}_{[t]_m} | + | U^{\e}_t - U^{\e}_{[t]_m} | \right) \ge \rho\right) \\
      &\le \P^\e\left(\sup_{0\le t\le T} | X^{\e,m}_t - X^{\e,m}_{[t]_m} | \ge \frac{\rho}{2}\right) + \P^\e\left(\sup_{0\le t\le T} | U^{\e}_t - U^{\e}_{[t]_m} | \ge \frac{\rho}{2}\right) \\
      &\le \P^\e(w_T(Y^\e,1/m) \ge \rho/2) + \P^\e(w_T(U^\e,1/m) \ge \rho/2).
    \end{split}
  \end{equation*}
  By virtue of the exponential tightness of $(Y^\e,U^\e)$ and Lemma \ref{exp-tight}, for each $T>0$ and $\rho>0$
  \begin{equation}\label{est-2}
    \lim_{m\to\infty}\limsup_{\e\to0}\e\log\P^\e\{T^{\e,m,\rho} < T\} = -\infty.
  \end{equation}
  Note that for all $T>0$,
  \begin{equation*}
    \{\|X^{\e,m} - X^\e\|_T >a\} \subset \left\{\sup_{t\in[0,T^{\e,m,\rho}]} |X^{\e,m}_t - X^\e_t| \ge a\right\} \cup \{T^{\e,m,\rho} < T\}.
  \end{equation*}
  Combining this with \eqref{est-1} and \eqref{est-2}, we obtain the desired result.
\end{proof}

\begin{corollary}\label{exp-tight-X}
  Under the assumptions in Lemma \ref{exp-good-app}, the family $\{X^\e\}_{\e>0}$ is exponentially tight.
\end{corollary}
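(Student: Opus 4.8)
The plan is to verify the two criteria of Lemma~\ref{exp-tight} for $\{X^\e\}_{\e>0}$, regarded as laws on $\C^d$; throughout I use that, under the standing hypotheses, $\{G(Y^\e)\}_{\e>0}$ is uniformly bounded by $K$, the family $\{(Y^\e,U^\e)\}_{\e>0}$ is exponentially tight, and $F$ is bounded and global Lipschitz. For criterion~(i) I would apply Lemma~\ref{est-8} directly to $Z^\e := X^\e$, with $F^\e_t = F(X^\e_t,U^\e_t)$, $z_0 = x_0$, and the constant stopping time $T^\e \equiv T$: since $|F^\e_t| \le \|F\|_0 \le \|F\|_0(1+|X^\e_t|^2)^{1/2}$, the hypothesis \eqref{bound} holds on $[0,T]$ with $\rho=1$, $C=\|F\|_0$, and Lemma~\ref{est-8} yields, for all $a>0$ and $0<\e\le1$,
\begin{equation*}
  \e\log\P^\e\big(\|X^\e\|_T \ge a\big) \le K' + \log\frac{1+|x_0|^2}{1+a^2},\qquad K' = \big(2\|F\|_0 + (2+n)\|F\|_0^2\big)K .
\end{equation*}
Taking $\limsup_{\e\to0}$ and then $a\to\infty$ gives criterion~(i).

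The modulus-of-continuity estimate, criterion~(ii), is the delicate step, because neither the local-martingale part $M^\e$ nor the finite-variation part $A^\e$ of $Y^\e$ need separately admit a uniform modulus bound --- only the total variation process $G(Y^\e)$ is controlled. I would get around this by passing through the frozen-coefficient processes $X^{\e,m}$ of \eqref{SDE-app}. Fix $m$ and take $\rho < 1/m$; then any interval $[s,t]$ with $t-s\le\rho$ contains at most one grid point $j/m$, and on each of the resulting (at most two) subintervals the integrand $F(X^{\e,m}_{[\cdot]_m},U^\e_{[\cdot]_m})$ is constant and $\F^\e$-adapted, hence can be pulled out of both the $dM^\e$- and the $dA^\e$-integral. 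This gives
\begin{equation*}
  |X^{\e,m}_t - X^{\e,m}_s| \le \|F\|_0\big(|Y^\e_{j/m}-Y^\e_s| + |Y^\e_t - Y^\e_{j/m}|\big) \le 2\|F\|_0\, w_T(Y^\e,\rho),
\end{equation*}
so $w_T(X^{\e,m},\rho)\le 2\|F\|_0\,w_T(Y^\e,\rho)$ whenever $\rho<1/m$. As exponential tightness is preserved under the continuous projection $(y,u)\mapsto y$, the family $\{Y^\e\}$ is exponentially tight, and Lemma~\ref{exp-tight} applied to it gives $\lim_{\rho\to0}\limsup_{\e\to0}\e\log\P^\e\big(w_T(X^{\e,m},\rho)\ge\eta'\big) = -\infty$ for every fixed $m$ and all $T,\eta'>0$.

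It remains to transfer this bound to $X^\e$ via the exponentially good approximation of Lemma~\ref{exp-good-app}. From $w_T(X^\e,\rho)\le w_T(X^{\e,m},\rho)+2\|X^{\e,m}-X^\e\|_T$ and $\e\log(a+b)\le\e\log 2 + \max(\e\log a,\e\log b)$, one gets, for every $m$,
\begin{equation*}
  \lim_{\rho\to0}\limsup_{\e\to0}\e\log\P^\e\big(w_T(X^\e,\rho)\ge\eta\big) \le \limsup_{\e\to0}\e\log\P^\e\big(\|X^{\e,m}-X^\e\|_T\ge\eta/4\big),
\end{equation*}
the $w_T(X^{\e,m},\rho)$-contribution having vanished by the previous paragraph; letting $m\to\infty$ and invoking Lemma~\ref{exp-good-app} sends the right-hand side to $-\infty$, which is criterion~(ii). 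Lemma~\ref{exp-tight} then yields the exponential tightness of $\{X^\e\}_{\e>0}$. The main obstacle is the middle step: one cannot estimate short-time increments of $X^\e$ itself, since its integrand is a genuine adapted process while the integrator $Y^\e$ splits into two parts neither of which has a separate modulus of continuity; freezing the coefficient on the mesh and absorbing the resulting error through Lemma~\ref{exp-good-app} is precisely what makes the argument work.
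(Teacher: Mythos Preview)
Your argument is correct but takes a different route from the paper. The paper first shows, for each fixed $m$, that the frozen-coefficient process $X^{\e,m}$ is the image of $(Y^\e,U^\e)$ under an explicitly constructed continuous map $\Psi^m:\C^{n+l}\to\C^d$; the contraction principle then gives an LDP for $\{X^{\e,m}\}_{\e>0}$, hence exponential tightness, and both criteria of Lemma~\ref{exp-tight} for $\{X^\e\}$ are deduced from those for $\{X^{\e,m}\}$ together with Lemma~\ref{exp-good-app}. You instead handle criterion~(i) directly on $X^\e$ via Lemma~\ref{est-8} (using only that $F$ is bounded), and for criterion~(ii) you replace the LDP-machinery for $X^{\e,m}$ by the elementary pathwise estimate $w_T(X^{\e,m},\rho)\le 2\|F\|_0\,w_T(Y^\e,\rho)$ for $\rho<1/m$, which reduces the modulus bound for $X^{\e,m}$ to that for $Y^\e$ already available from the assumed exponential tightness. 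Your approach is more hands-on and avoids checking continuity of $\Psi^m$ and invoking the contraction principle; the paper's approach is more structural, packaging both tightness criteria for $X^{\e,m}$ into a single LDP statement. Both arguments rely on Lemma~\ref{exp-good-app} to close the gap between $X^{\e,m}$ and $X^\e$ in the modulus-of-continuity step.
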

\begin{proof}
  We first show that for each $m\in\N_+$, the family $\{X^{\e,m}\}_{\e>0}$ defined in \eqref{SDE-app} obeys the LDP with a good rate function. To this end, we define for each $m$ a map $\Psi^m: \C^{n+l} \to \C^d$ via $f = \Psi^m(g,h)$, where $f(0) = 0$ and for $t\in(N+\frac{k}{m},N+\frac{k+1}{m}]$, $N\in\N$, $k = 0,1,\cdots,m-1$,
  \begin{equation*}{\textstyle
    f(t) = f(N+\frac{k}{m}) + F(f(N+\frac{k}{m}),h(N+\frac{k}{m})) \left(g(t) - g(N+\frac{k}{m}) \right). }
  \end{equation*}
  Then it is easy to see $X^{\e,m} = \Psi^m(Y^\e,U^\e)$. Let $f_i = \Psi^m(g_i,h_i)$ for $g_i\in\C^n,h_i\in\C^l$, $i=1,2$. Let $e = f_1 - f_2$. Then for fixed $N\in\N$,
  \begin{equation*}
    \sup_{t\in(N+\frac{k}{m},N+\frac{k+1}{m}]} e(t) \le \left(1+2\|F\|_\Lip (\|g_1\|_{N+1}+1)\right) \left( e({\textstyle N+\frac{k}{m}}) + \|g_1-g_2\|_{N+1} + \|h_1-h_2\|_{N+1}\right).
  \end{equation*}
  Since $e(0) = 0$, by iterating this bound over $k = 0,1,\cdots,m-1$ and $N$, we have
  \begin{equation*}
    \|e(t)\|_{N+1} \le C(\|F\|_\Lip,\|g_1\|_{N+1},N) \left(\|g_1-g_2\|_{N+1} + \|h_1-h_2\|_{N+1}\right).
  \end{equation*}
  Hence $\Psi^m$ is continuous from $\C([0,N+1];\R^{n+l})$ to $\C([0,N+1];\R^d)$ for each $N\in\N$, and consequently continuous from $\C^{n+l}$ to $\C^d$ (cf. \cite[Proposition IV.5.6, IV.5.7]{Con90}). The contraction principle (see, e.g., \cite[Theorem 4.2.1]{DZ98}) yields that the family $\{X^{\e,m}\}$ obeys the LDP with a good rate function, and thus is exponentially tight for each $m$.

  Fix $T>0$. Observe that for all $a>0$, $\eta>0$ and $\rho>0$,
  \begin{align*}
    \{\|X^\e\|_T \ge a\} &\subset \{\|X^\e-X^{\e,m}\|_T \ge {\textstyle\frac{a}{2}} \} \cup \{ \|X^{\e,m}\|_T \ge {\textstyle\frac{a}{2}} \}, \\
    \{w_T(X^\e,\rho) \ge \eta \} &\subset \{\|X^\e-X^{\e,m}\|_T \ge {\textstyle\frac{\eta}{4}} \} \cup \{w_T(X^\e,\rho) \ge {\textstyle\frac{\eta}{2}} \}.
  \end{align*}
  Then the exponential tightness of the family $\{X^\e\}$ follows from that of $\{X^{\e,m}\}$, Lemma \ref{exp-good-app} and Lemma \ref{exp-tight}.
\end{proof}

\subsection{Proofs}

\begin{proof}[\textbf{Proof of Proposition \ref{LDP-control}}]
  \emph{Step 1 (Identification of the rate function).} Suppose that the family $\{(X^\e,Y^\e,U^\e)\}_{\e>0}$ is exponentially tight. We will show that for any subsequence $\{(X^{\e_k},Y^{\e_k},U^{\e_k})\}_{k=1}^\infty$, with $\e_k\to0$ as $k\to\infty$, which obeys the LDP, the rate function $I$ is given by \eqref{rate-func}. For notational simplicity, we still denote the subsequence $\e_k$ by $\e$.

  We follow the lines of \cite[Theorem 6.1]{Gan18}. By the contraction principle, the family $(Y^\e,U^\e,F(X^\e,U^\e))$ obeys the LDP with the good rate funtion $I^\sharp(y,u,f) = \inf\{I(x,y,u): f=F(x,u)\}$. Since $X^\e = F(X^\e,U^\e)\cdot Y^\e$, Lemma \ref{LDP-integral} yields that the family $(X^\e,Y^\e,U^\e,F(X^\e,U^\e))$ obeys the LDP with the good rate function
  \begin{equation}\label{rate-J}
    \begin{split}
      J(x,y,u,f) & =
      \begin{cases}
        I^\sharp(y,u,f), & x = f\cdot y \text{ and } y \text{ is locally of finite variation}, \\
        \infty, & \text{otherwise}.
      \end{cases} \\
         & =
      \begin{cases}
        \inf\{I(x',y,u): f=F(x',u)\}, & x = f\cdot y \text{ and } y \text{ is locally of finite variation}, \\
        \infty, & \text{otherwise}.
      \end{cases}
    \end{split}
  \end{equation}
  But the contraction principle yields $I(x,y,u) = \inf_f \{J(x,y,u,f)\}$. Hence, if $y$ is of infinite variation, then by \eqref{rate-J}, $J(x,y,u,f) = \infty$ and $I(x,y,u) = \infty$.

  On the other hand, using contraction principle once again, the rate function $J$ is
  \begin{equation}\label{rate-J-2}
    J(x,y,u,f) =
    \begin{cases}
      I(x,y,u), & f = F(x,u), \\
      \infty, & \text{otherwise}.
    \end{cases}
  \end{equation}
  Suppose $y$ is locally of finite variation but $x\ne F(x,u)\cdot y$, we claim that $J(x,y,u,f) = \infty$ and then $I(x,y,u) = \infty$. If $x \ne f\cdot y$, then \eqref{rate-J} yields the claim. If $x = f\cdot y$, then $f \ne F(x,u)$, and the claim follow from \eqref{rate-J-2}.

  Suppose now $I(x,y,u)<\infty$. Then the previous arguments yield that $y$ is locally of finite variation and $x$ satisfies the equation $x = F(x,u)\cdot y$. Again by the contraction principle, $I'(y,u) = \inf_{x'}\{I(x',y,u)\}$, and obviously $I'(y,u) \le I(x,y,u)$. If $I'(y,u) <I(x,y,u)$, then there exists $x'$ such that $I(x',y,u)<I(x,y,u) <\infty$. Hence, $x'$ satisfies the equation $x' = F(x',u)\cdot y$, which yields $x = x'$, since this equation has a unique solution as $F$ is Lipschitz. Therefore, we have $I'(y,u) =I(x,y,u)$ in this case. The representation \eqref{rate-func} follows.

  In the rest steps, we only need to show the exponential tightness of $\{(X^\e,Y^\e,U^\e)\}_{\e>0}$, provided the exponential tightness of $\{(Y^\e,U^\e)\}_{\e>0}$ and $\{G(Y^\e)_t\}_{\e>0}$ for each $t$.

  \emph{Step 2 (Localization).} Suppose $\{(X^\e,Y^\e, U^\e)\}$ is exponentially tight if in addition, the following conditions are satisfied:
  the family of increasing processes $\{G(Y^\e)\}$ are uniformly bounded. We deduce it holds in general.


  For each $\e, p>0$, define a stopping time
  $$T^{\e,p}:=\inf\{t\ge0: G(Y^\e)_t \ge p\}.$$
  Then $T^{\e,p}$ is nondecreasing in $p$. Let $U^{\e,p} := U^\e_{\cdot\wedge T^{\e,p} }$, $Y^{\e,p} := Y^\e_{\cdot\wedge T^{\e,p} }$, and each $X^{\e,p}$ be the solution of the SDE
  \begin{equation}\label{sde-app}
    dX^{\e,p}_t = F(X^{\e,p}_{t-},U^{\e,p}_{t-}) dY^{\e,p}_t, \quad X^{\e,p}_0 = x_0,
  \end{equation}
  Then $X^{\e,p} = X^\e_{\cdot\wedge T^{\e,p} }$. It is obvious that (cf. \cite[Eq. (VI.1.9)]{JS13})
  \begin{align*}
    \sup_{t\le T}(|Y^{\e,p}_t| + |U^{\e,p}_t|) &= \sup_{t\le T \wedge T^{\e,p}}(|Y^{\e}_t| + |U^{\e}_t|) \le \sup_{t\le T }(|Y^{\e}_t| + |U^{\e}_t|), \\
    w_T((Y^{\e,p},U^{\e,p}),\rho) &= w_{T\wedge T^{\e,p}}((Y^{\e},U^{\e}),\rho) \le w_{T}((Y^{\e},U^{\e}),\rho).
  \end{align*}
  Lemma \ref{exp-tight} yields that $\{(Y^{\e,p},U^{\e,p})\}$ is also exponentially tight, for each $p>0$. Note that the increasing process $G(Y^{\e,p})$ associated to each $Y^{\e,p}$ coincides with $G(Y^{\e})_{\cdot\wedge T^{\e,p} }$. Hence, $G(Y^{\e,p})$ is uniformly bounded by $p$, for each $p>0$, and $G(Y^{\e,p})\le G(Y^{\e})$, which yields the exponential tightness for the family $\{G(Y^{\e,p})_t\}_{\e>0}$, for each $t>0$ and $p>0$. Then our assumption implies that each family $\{(X^{\e,p},Y^{\e,p},U^{\e,p})\}_{\e>0}$ is exponentially tight for every $p>0$. Using Lemma \ref{exp-tight} again, for each $p>0$ and all $T>0$, $\eta>0$, and any $\delta>0$, there exist $a>0$, $\rho>0$ and $\e'_0>0$, such that for all $0<\e\le\e'_0$,
  \begin{align}
    \P^\e\left( \sup_{t\le T}(|U^{\e,p}_t| + |Y^{\e,p}_t| + |X^{\e,p}_t|)\ge a \right) &< \delta^{1/\e}, \label{est-4}\\
    \P^\e\left( w_T((U^{\e,p},Y^{\e,p},X^{\e,p}),\rho) \ge\eta \right) &< \delta^{1/\e}, \label{est-5}
  \end{align}

  Note that for each $\e,p, T>0$,
  \begin{equation}\label{est-9}
    \{T^{\e,p} > T\} \subset \{(U^{\e,p}_t,Y^{\e,p}_t,X^{\e,p}_t) = (U^\e_t,Y^\e_t,X^\e_t), \forall t \in[0,T] \}.
  \end{equation}
  Fix $T>0$. Using the exponential tightness of $\{G(Y^\e)_T\}$, there exist $p_0>0$ and $\e_0>0$, such that for all $0<\e\le\e_0$,
  \begin{equation}\label{est-6}
    \P^\e(T^{\e,p_0} \le T) = \P^\e\left( G(Y^\e)_T \ge p_0 \right) < \delta^{1/\e}.
  \end{equation}
  Hence, combining \eqref{est-4}, \eqref{est-5}, \eqref{est-9} and \eqref{est-6}, for all $0<\e\le \e_0 \wedge \e'_0 \wedge 1$,
  \begin{equation*}
    \begin{split}
       &\ \P^\e\left( \sup_{t\le T}(|U^{\e}_t| + |Y^{\e}_t| + |X^{\e}_t|)\ge a \right) \\
       =&\ \P^\e\left( \sup_{t\le T}(|U^{\e}_t| + |Y^{\e}_t| + |X^{\e}_t|)\ge a, T^{\e,p_0} > T \right) + \P^\e\left( \sup_{t\le T}(|U^{\e}_t| + |Y^{\e}_t| + |X^{\e}_t|)\ge a, T^{\e,p_0} \le T \right) \\
       \le&\ \P^\e\left( \sup_{t\le T}(|U^{\e,p_0}_t| + |Y^{\e,p_0}_t| + |X^{\e,p_0}_t|)\ge a \right) + \P^\e\left( T^{\e,p_0} \le T \right)  \\
       < &\ 2\delta^{1/\e},
    \end{split}
  \end{equation*}
  and
  \begin{equation*}
    \P^\e\left( w_T((U^{\e},Y^{\e},X^{\e}),\rho) \ge\eta \right) \le \P^\e\left( w_T((U^{\e,p_0},Y^{\e,p_0},X^{\e,p_0}),\rho) \ge\eta \right) + \P^\e\left( T^{\e,p_0} \le T \right) < 2\delta^{1/\e}.
  \end{equation*}
  Once again, Lemma \ref{exp-tight} says that the family $\{(X^\e,Y^{\e},U^{\e})\}$ is exponentially tight.

  \emph{Step 3 (Exponential tightness of $\{(X^\e,Y^\e,U^\e)\}$).} In this step, we will assume the family of processes $\{G(Y^\e)\}$ to be uniformly bounded by a constant $K>0$. By Corollary \ref{exp-tight-X}, the family $\{X^\e\}$ is exponentially tight. Observe that for all $a>0$, $\eta>0$ and $\rho>0$,
  \begin{align*}
    \{\|(X^\e,Y^\e,U^\e)\|_T \ge a\} &\subset \{\|X^\e\|_T \ge {\textstyle\frac{a}{2}} \} \cup \{ \|(Y^\e,U^\e)\|_T \ge {\textstyle\frac{a}{2}} \}, \\
    \{w_T((X^\e,Y^\e,U^\e),\rho) \ge \eta \} &\subset \{w_T(X^\e,\rho) \ge {\textstyle\frac{\eta}{2}} \} \cup \{w_T((Y^\e,U^\e),\rho) \ge {\textstyle\frac{\eta}{2}} \}.
  \end{align*}
  The exponential tightness of $\{(X^\e,Y^\e,U^\e)\}$ follows from that of $\{X^\e\}$ and $\{(Y^\e,U^\e)\}$, together with Lemma \ref{exp-tight}.
\end{proof}

\begin{proof}[\textbf{Proof of Corollary \ref{local-Lip}}]
  Since we have already identify the rate function in Step 1 of the proof of Proposition \ref{LDP-control}, it is only needed to show the family $\{(X^\e,Y^\e,U^\e)\}$ is exponentially tight. Suppose first that the families $\{X^\e\}$ and $\{U^\e\}$ are uniformly bounded by a constant $K>0$. Let $f:\R^d\times\R^l \to \R^d\times\R^l$ be a 
  bounded and continuously differentiable function with $f(x,u) = (x,u)$ when $|x|\le K$ and $|u|\le K$.
  Define $\tilde F(x,u) := F(f(x,u))$. Then obviously $\tilde F$ is global Lipschitz and each $X^\e$ is the solution of the following SDE:
  \begin{equation*}
    dX^\e_t = \tilde F(X^\e_{t},U^\e_{t}) dY^\e_t, \quad X^\e_0 = x_0.
  \end{equation*}
  Hence, Proposition \ref{LDP-control} yields that $\{(X^\e,Y^\e,U^\e)\}$ is exponential tight. Now by virtue of the assumption and the exponential tightness of $\{U^\e\}$, the family $\{\sup_{0\le s\le t}(|X^\e_s|+|U^\e_s|)\}$ is exponential tight for every $t>0$. The general case follows from a similar localization argument in Step 2 of the proof of Proposition \ref{LDP-control}, with $\sup_{0\le s\le t}(|X^\e_s|+|U^\e_s|)$ in place of $G(Y^\e)_t$.
\end{proof}

\paragraph{Acknowledgements.}
The research of J. Duan was partly supported by the NSF grant 1620449. The research of Q. Huang was partly supported by NSFC grants 11531006 and 11771449. We would like to thank Prof.~Jifu Xiao for helpful discussions.

\end{document}